\pgfplotsset{compat=1.16}
\renewcommand{\theequation}{\thesection\arabic{equation}}
\newtheorem{theorem}{Theorem}
\newtheorem{lemma}{Lemma}
\newtheorem{corollary}{Corollary}
\theoremstyle{definition}
\newtheorem{prop}[theorem]{Proposition}
\newif\ifsupp
\begin{document}

\renewcommand{\baselinestretch}{2}

\markright{ \hbox{\footnotesize\rm Statistica Sinica
}\hfill\\[-13pt]
\hbox{\footnotesize\rm
}\hfill }

\markboth{\hfill{\footnotesize\rm Alon Kipnis} \hfill}
{\hfill {\footnotesize\rm Rare/Weak models and log-chisquared P-values} \hfill}

\renewcommand{\thefootnote}{}
$\ $\par


\fontsize{12}{14pt plus.8pt minus .6pt}\selectfont \vspace{0.8pc}
\centerline{\large\bf 
Unification of Rare and Weak Multiple Testing }
\vspace{2pt} 
\centerline{\large\bf Models using Moderate Deviations Analysis }
\vspace{2pt}
\centerline{\large \bf and Log-Chisquared P-values}
\vspace{.4cm} 
\centerline{Alon Kipnis} 
\vspace{.4cm} 
\centerline{\it Reichman University}
 \vspace{.55cm} \fontsize{9}{11.5pt plus.8pt minus.6pt}\selectfont

\newcommand{\Exp}{\mathrm{Exp}}
\newcommand{\lognull}{\mathrm{Exp}(2)}
\newcommand{\LogNormal}{\mathrm{LogNormal}}
\newcommand{\newtext}[1]{{\color{black}#1}}
\newcommand{\Unif}{\mathrm{Unif}}
\newcommand{\Pois}{\mathrm{Pois}}
\newcommand{\FDR}{\mathrm{FDR}}
\newcommand{\Bin}{\mathrm{Bin}}
\newcommand{\Bernoulli}{\mathrm{Bernoulli}}
\newcommand{\Beta}{\mathrm{Beta}}
\newcommand{\Scal}{\mathcal{S}}
\newcommand{\Prp}[1]{\Pr\left[#1 \right]}
\newcommand{\reals}{\mathbb R}
\newcommand{\simiid}{\overset{\mathsf{iid}}{\sim}}
\newcommand{\naturals}{\mathbb N}
\newcommand{\integers}{\mathbb Z}
\newcommand{\DKL}{\mathrm{D}_{\mathsf{KL}}}
\newcommand{\Ncal}{\mathcal{N}}
\newcommand{\dense}{\mathsf{dense}}
\newcommand{\sparse}{\mathsf{sparse}}
\newcommand{\cdense}{\mathsf{Dense}}
\newcommand{\csparse}{\mathsf{Sparse}}

\newcommand{\coloneq}{:=}
\newcommand{\Hell}{\mathrm{H}}
\newcommand{\Bonf}{\mathsf{Bonf}}
\newcommand{\one}{\mathbf{1}}

\newcommand{\One}[1]{\ensuremath{\mathbf{1}_{{#1}}}}

\newcommand{\HC}{\mathrm{HC}}
\newcommand{\TV}{\mathrm{TV}}
\newcommand{\BJ}{\mathrm{BJ}}
\newcommand{\minP}{\mathrm{min-P}}
\newcommand{\normal}{\mathsf{normal}}
\newcommand{\onesample}{\mathsf{one-sample}}
\newcommand{\twosample}{\mathsf{two-sample}}
\newcommand{\Var}[1]{\mathrm{Var}\left[ #1\right]}
\newcommand{\ex}[1]{\ensuremath{\mathbb{E}\left[ #1\right]}}
\newcommand{\exsub}[2]{\ensuremath{\mathbb{E}_{#1}\left[ #2\right]}}

\newcommand{\sigTwoColor}{blue!60!white}
\newcommand{\sigHalfColor}{green!60!black}
\newcommand{\sigOneColor}{red!90!black}
\newcommand{\HCtextcolor}{red!40!white}
\newcommand{\normalTwoSmpColor}{\HCcolor}
\newcommand{\chisqColor}{orange}

\newcommand{\betab}{\boldsymbol\beta}
\newcommand{\SNR}{\mathsf{SNR}}
\newcommand{\deltab}{\boldsymbol\delta}
\newcommand{\pib}{\boldsymbol\pi}
\newcommand{\alphab}{\boldsymbol\alpha}
\newcommand{\phib}{\boldsymbol\phi}
\newcommand{\Phib}{\boldsymbol\Phi}
\newcommand{\rhob}{\boldsymbol\rho}
\newcommand{\Rb}{\bf R}
\newcommand{\gammab}{\boldsymbol\gamma}
\newcommand{\Gammab}{\boldsymbol\Gamma}
\newcommand{\kappab}{\boldsymbol\kappa}
\newcommand{\Id}{{\mathbf{I}}}  
\newcommand{\xb}{\boldsymbol x}
\newcommand{\fb}{\boldsymbol f}
\newcommand{\Bb}{\boldsymbol B}
\newcommand{\mb}{\boldsymbol m}
\newcommand{\Hrm}{\mathrm H}
\newcommand{\db}{\boldsymbol d}
\newcommand{\wb}{\boldsymbol w}
\newcommand{\vb}{\boldsymbol v}
\newcommand{\epsilonb}{\boldsymbol \epsilon}
\newcommand{\yb}{\boldsymbol y}
\newcommand{\ub}{\boldsymbol u}
\newcommand{\hb}{\boldsymbol h}
\newcommand{\zb}{\boldsymbol z}
\newcommand{\nb}{\boldsymbol n}
\newcommand{\cb}{\boldsymbol c}
\newcommand{\eb}{\boldsymbol e}

\newcommand{\MSE}{\mathsf{MSE}}

\newcommand{\hugeskip}{\vspace{2cm}}

\newcommand{\SE}[1]{\mathrm{SE} \left[#1\right]}
\newcommand{\Cov}{\mathrm{Cov}}

\newcommand{\Def}{{\color{blue} Definition}}
\newcommand{\Exm}{{\color{magenta} Example}}
\newcommand{\Exms}{{\color{magenta} Examples}}
\newcommand{\Thm}{{\color{Maroon} Theorem}}
\newcommand{\Property}{{\color{Maroon} Property}}
\newcommand{\Wrn}{{\color{red} Warning}}
\newcommand{\Terminology}{{\color{Maroon} Terminology}}

\newcommand{\Mcal}{\mathcal M}

\newcommand{\nr}[1]{{\sf\textcolor{red}{[#1]}}}

\newcommand{\abs}[1]{\ensuremath{\left\vert#1\right\vert}}
\newcommand\ZuWeis{\mathrel{\mathop:\!\!=}} 
\newcommand\WeisZu{\mathrel{=\!\!\mathop:}}

\newcommand{\N}{\ensuremath{\mathds{N}}} 
\newcommand{\R}{\ensuremath{\mathds{R}}}  
\newcommand{\T}{\ensuremath{\mathds{T}}}  

\newcommand{\Pp}{\ensuremath{\mathsf{\bar{P}}}}  %
\newcommand{\Pf}{\ensuremath{\textbf{f}}}  
\newcommand{\E}{\operatorname{E}} 
\newcommand{\argmin}{\operatorname{argmin}}   
\newcommand{\proj}{\operatorname{proj}}
\newcommand{\rank}{\operatorname{rank}}
\newcommand{\tr}{\operatorname{Tr}}
\newcommand{\diag}{\operatorname{diag}}
\newcommand{\sign}{\operatorname{sign}}


\begin{quotation}
\noindent {\it Abstract:}
Rare and Weak models for multiple hypothesis testing assume that only a small proportion of the tested hypotheses concern non-null effects and the individual effects are only moderately large, so they generally do not stand out individually, for example in a Bonferroni analysis. Such models have been studied in quite a few settings, for example in some cases studies focused on an underlying Gaussian means model for the hypotheses being tested; in some others, Poisson and Binomial. Such seemingly different models have the following common structure. Summarizing the evidence of individual tests by the negative logarithm of its P-value, the model is asymptotically equivalent to a situation in which most negative log P-values have a standard exponential distribution but a small fraction might have an alternative distribution which is approximately noncentral chisquared on one degree of freedom. This log-chisquared approximation is different from the log-normal approximation of Bahadur. The latter is unsuitable for analyzing Rare and Weak multiple-testing models.

We characterize the asymptotic performance of global tests combining asymptotic log-chisquared P-values in terms of the chisquared mixture parameters: the scaling parameter controlling heteroscedasticity, the non-centrality parameter, and the parameter controlling the rarity of individual non-null effects. In a phase space involving the last two parameters, we derive a region where all tests are asymptotically powerless. Outside of this region, the Berk-Jones and the Higher Criticism tests have maximal power. Inference techniques based on the minimal P-value, false-discovery rate controlling, and Fisher's combination test have sub-optimal asymptotic phase diagrams. Our analysis yields the asymptotic power of global testing in various new rare and weak models, including two-sample heteroscedastic normal mixtures and binomial experiments with perturbed probabilities of success.

\vspace{9pt}
\noindent {\it Key words and phrases:}
multiple testing, sparse mixture, heterogeneous mixture,
higher criticism, P-values.
\par

\end{quotation}\par

\def\thefigure{\arabic{figure}}
\def\thetable{\arabic{table}}

\renewcommand{\theequation}{\thesection.\arabic{equation}}

\fontsize{12}{14pt plus.8pt minus .6pt}\selectfont

\ifsupp
\etocsetnexttocdepth{5}
\etocsettocstyle{\subsubsection*{Contents}}{}
\localtableofcontents
\fi

\section{Introduction}
\setcounter{equation}{0}
\label{sec:intro}

\subsection{Motivation}
Consider a multiple hypothesis testing situation, each test involves a different feature of the data where different features are independent. We are interested in testing a global null hypothesis against the following alternative: the non-null effects are concentrated in a small, but unknown, subset of the hypotheses. In the most challenging situation, effects are not only rare but also weak in the sense that the non-null test statistics are unlikely to provide evidence after Bonferroni's correction. Rare and weak multiple hypothesis testing problems
of this nature arise in a wide range
of situations \citep{donoho2015special}. Specific examples include:
\begin{itemize}
    \item \emph{Sparse (rare) signal detection}. We are interested in intercepting a transmission that occupies few frequency bands out of potentially many, while the occupied bands are unknown to us \citep{tandra2008snr, bayer2020look}. The features are periodogram ordinates associated with individual frequency bands. Evidence for the presence of a signal can be gathered by testing each ordinate against the same exponential distribution. 
    \item \emph{Classification}. Classifying images or other high-dimensional signals usually involves hundreds or more features. In a one-versus-all classification setup, we view the typical response of the features under each class as the null hypothesis. Testing against this null amounts to determining whether the tested signal is associated with that class or not. A situation of wide interest is when inter-class discrimination is due to a small proportion of features out of potentially many, and we do not know which ones they are likely to be \citep{donoho2009feature, ingster2009classification,jin2009impossibility}.
    \item \emph{Detecting rare changes between two high-dimensional distributions}. 
    Testing whether two high-dimensional datasets are simply two different realizations of the same data-generating mechanism is a classical problem in statistics, computer science, and information theory \citep{acharya2012competitive,balakrishnan2018hypothesis,DonohoKipnis2020}. This scenario is formulated as a two-sample testing problem; the null hypothesis states that both samples were obtained from the same high-dimensional parent distribution. The alternative hypothesis states that differences between the mechanisms occur in a small and unknown subspace of the parameters.  
\end{itemize}
Applications as above have motivated a significant body of work in rare and weak multiple testing settings throughout the past two decades, providing fruitful insights for signal detection, feature selection, and classification problems in high dimensions \citep{jin2016rare}. Specific examples of rare and weak multiple testing settings include normal mixtures \citep{ingster2012nonparametric,  jin2003detecting,donoho2004higher,abramovich2006adapting}, binomial mixtures \citep{mukherjee2015hypothesis}, 
linear regression model under Gaussian noise \citep{arias2011global, ingster2010detection},
Poisson mixtures \citep{arias2015sparse}, heteroscedastic normal mixtures  \citep{tony2011optimal}, general mixtures \citep{cai2014optimal, arias2017distribution}, mixture of unknown distributions \citep{delaigle2009higher, delaigle2011robustness,arias2017distribution}, and several two-sample settings \citep{DonohoKipnis2020,GaliliKipnisYachini2022}.

\subsection{Contributions}
In this article, we study one rare and weak multiple testing setting that subsumes the vast majority of these previously studied ones. Our setting is not tied to a specific data-generating model. Instead, we model the behavior of a collection of P-values, each P-value summarizes the evidence of one test statistic against the global null. These P-values may be obtained either from one- or two-sample tests and may represent responses over a variety of models. More generally, the advantages of modeling the distribution of the P-values rather than the data are discussed in \citep{lambert1981influence,lambert1982asymptotic,sackrowitz1999p,boos2011p}. 

Recall that a deviation from the mean of a sequence $X_1,X_2,\ldots,$ of standardized and identically and independently distributed random variables is said to be \emph{moderate} if it is of the form $\sqrt{q\log(n)/n}$ for some $q>0$ \citep{RubinSethuraman1965, zeitouni1998large}. For such deviations, Cram\'er's theorem implies  
\begin{align}
    \label{eq:cramer}
\Pr\left[\left|\frac{X_1 + \ldots + X_n}{\sqrt{n}} \right| >  \sqrt{ 2q \log(n)}\right] \sim n^{-q}, \quad \text{uniformly in } q \geq a>0,
\end{align}
provided the moment-generating function exists. Our key insight says that the log-chisquared approximation (see \eqref{eq:lognormal_appx} below) -- and not the log-normal approximation of \citep{bahadur1960stochastic,lambert1982asymptotic} -- 
is accurate for characterizing the asymptotic power of testing in rare and weak models involving departures on the moderate scale. Consequently, our setting unifies all previously studied rare and weak settings in which moderate deviations analysis applies under one setting we denote as the Rare Moderate Departures (RMD) model. This unification provides new characterizations for the power of some global testing procedures in those earlier studied settings and in several new settings as summarized in Table~\ref{table:models}. Additionally, our analysis guides the calibration of parameters of new high-dimensional signal models to experience a phase transition between the rarity and strength of individual effects; see \citep{GaliliKipnisYachini2022} for an example in the context of survival analysis.


\begin{table}
    \begin{center}
    \begin{tabular}{p{6cm}||c|c|l}
             Departures model &
             \multicolumn{2}{|c|}{\thead{Chisquared \\ parameters}}
             & \thead{studied in} \\
             & $\rho$ & $\sigma$ & \\
             \hline 
             \hline 
             Normal means (heteroscedastic)
             & $r$ & $s$ & \citep{tony2011optimal}
              \\ \hline
             Two-sample normal means ~~~~~ (homoscedastic) & $r/2$ & $1$ & \citep{DonohoKipnis2020}
             \\ \hline 
            {\bf  Two-sample normal means (heteroscedastic)} & $r/2$ & $\sqrt{\frac{1+s^2}{2}}$ &  \\ \hline
             Poisson means & $r$ & $1$ & \citep{arias2015sparse}
             \\ \hline
             Two-sample Poisson means & $r/2$ & $1$ & \citep{DonohoKipnis2020}
             \\ \hline
             \bf{
              Binomial success probability} 
               & $r$ & $ s \cdot r$ &
              \\
    \end{tabular}
    \end{center}
    
    \caption{
    Rare and weak multiple testing settings that are carried under our RMD formulation and their noncentral chisquared parameters. Models appearing in bold are new.}
    \label{table:models}
\end{table}

\subsection{Rare Moderately Departed 
Log-chisquared P-values}
Suppose that the $i$-th test statistic yields the P-value $p_i$, $i=1,\ldots,n$.  We further assume that $p_i\sim \Unif(0,1)$ under the global null, corresponding to the case where the model underlying the $i$-th test statistics has a continuous distribution (we relax this assumption later on). Consequently, $-2\log(p_i)\sim \lognull$, where $\lognull$ is the exponential distribution with mean $2$ (or rate $1/2$), also known as the chisquared distribution with two degrees of freedom $\chi^2_2$. Our model proposes the following alternative: Roughly $n\epsilon$ of the P-values depart from their uniform distribution and instead obey
\begin{align}
    \label{eq:chisq_appx}
-2\log(p_i) \overset{D}{\approx} \left( \mu + \sigma Z\right)^2,\qquad Z\sim\Ncal(0,1).
\end{align}
Here $\overset{D}{\approx}$ indicates a specific form of approximation in distribution that we formalize in Section~\ref{sec:APLC} below. Leaving the details of this approximation aside for now, \eqref{eq:chisq_appx} says that $-2\log(p_i)$ is approximately distributed as a scaled noncentral chisquared random variable (RV) over one degree of freedom with noncentrality parameter $\mu$, and scaling parameter $\sigma$. 
We focus on the case where the rarity parameter $\epsilon$ vanishes while the intensity parameter $\mu$ is only moderately large, making our global testing problem challenging; in some cases, impossible. As we shall see, in this regime the non-null effects are not only rare but are also weak in the sense that they generally do not stand out individually in a Bonferroni analysis.

\subsection{Log-chisquared versus Log-normal}
The emergence of the log-chisquared approximation for P-values is somewhat surprising because this approximation is different from the log-normal approximation developed in \citep{bahadur1960stochastic} and \citep{lambert1982asymptotic}. 
In Section~\ref{sec:logchisq_vs_lognorm}, we show that the log-chisquared distribution fits the distribution of the P-values under moderate departures significantly better than the log-normal distribution. Furthermore, the log-normal approximation does not indicate the correct asymptotic performance of tests under rare and weak multiple testing settings. 
To summarize this last point, we establish here that a rare multiple hypothesis testing setting in which the departures are on the moderate scale corresponds to detecting a few noncentral chisquared signals against an exponential background, rather than detecting a few normal signals as one might have proposed in view of the log-normal approximation. 
Potential applications of this improved approximation, beyond the asymptotic power analysis of multiple hypothesis testing we discuss in this paper, include better estimates of the so-called ``reproducibility probability'' of experiments \citep{boos2011p} and empirical Bayes method for identifying discoveries in large-scale inference \citep{efron2001empirical,pounds2003estimating}; we leave these topics as future work. 

We note that the logarithmic scoring scale for P-values goes back to Fisher, who initially suggested it as a method of ranking success in card-guessing games \citep{fisher1924method}. 
For global testing, Fisher proposed the statistic \citep{fisher1992statistical}
\begin{align}
F_n := \sum_{i=1}^n -2\log(p_i)
\label{eq:fisher},
\end{align}
which has a $\chi_{2n}^2$ distribution under the global null. A test based on $F_n$ is known to be effective in the presence of small effects distributed across the bulk of cases, but not effective under relatively rare and somewhat stronger but individually still weak as our model proposes; see a formal statement about the inadequacy of $F_n$ in our setting in Theorem~\ref{thm:Fisher} below. The logarithmic scale for P-values is now standard in genome-wide association studies (GWAS) (e.g., \citep{balding2006tutorial, pearson2008interpret, price2010new, harold2009genome}) and in other areas \citep{li2012volcano, quaino2014volcano, boos2011p, gibson2021role}. Our setting yields an explicit model for testing rare and weak effects in these applications: testing chisquared departures against an exponential background. A similar model arises in detecting the presence of rare and weak sinusoids in white noise based on the periodogram. For this setting, Fisher's periodogram test is based on the largest periodogram ordinate \citep{fisher1929tests} which is analogous to a Bonferroni analysis.

\subsection{Paper Organization}
In Section~\ref{sec:APLC} we define the RMD setting and analyze the asymptotic properties of tests. In 
Section~\ref{sec:models} we explore several rare and weak signal detection problems that conform to the RMD model formulation. In Section~\ref{sec:logchisq_vs_lognorm} we compare our log-chisquared approximation for the distribution of P-values under the alternative hypothesis and the classical log-normal approximation. Additional discussions are provided in Section~\ref{sec:discussion}. All proofs are in the Supplementary Material \citep{KipnisLogChisqSupp}.

\section{Rare Moderate Departures Setting and Analysis
\label{sec:APLC}}
\setcounter{equation}{0}

\subsection{Multiple Testing with Rare Non-null Effects}
The description in the Introduction above depicts the following global hypothesis testing setting involving a sequence of P-values $p_1,\ldots,p_n$.
\begin{align}
\begin{split}
    H_0 & \quad : \quad -2\log(p_i) \sim \lognull ,\quad i=1,\ldots,n, \\
    H_1^{(n)} & \quad : \quad -2\log(p_i) \sim (1-\epsilon)\lognull + \epsilon Q_i^{(n)}, \quad i=1,\ldots,n,
    \label{eq:hyp_log_n}
\end{split}
\end{align}
where $Q_i^{(n)}$ is a probability distribution specifying the non-null behavior of the $i$-th P-value.

We calibrate the rarity parameter $\epsilon$ to $n$ according to
\begin{align}
\label{eq:calibration_eps}
    \epsilon & = \epsilon_n := n^{-\beta}, 
\end{align}
where $\beta\in(0,1)$. This calibration proposes that for an overwhelming majority of the individual tests, the response under the alternative is indistinguishable from the null. 

The expected proportion of the non-null effects (rarity) of most interest under RMD is $n^{-\beta}$ for $\beta \in (1/2,1)$. Indeed, for less rare effects ($\beta < 1/2$), the signal may be viewed as ``dense" in the sense that tests that are powerful against small but frequent departures can also be asymptotically powerful \citep{arias2011global}. 

\subsection{
The Log-Chisquared Approximation}
We compare $Q_i^{(n)}$ to the non-central and scaled chisquared distribution in the right-hand side of \eqref{eq:chisq_appx} with non-centrality parameter $\mu$ calibrated to $n$ as 
\begin{align}
    \label{eq:calibration_mu}
    \mu = \mu_n(\rho) := \sqrt{2 \rho \log(n)}, \qquad \rho > 0,
\end{align}
and with a fixed scaling parameter $\sigma$. 
Specifically, define the moderately perturbed and scaled chisquared distribution
\begin{align*}
    \chi^2(\rho,\sigma) \overset{D}{=} (\mu_n(\rho) + \sigma Z)^2,\qquad Z\sim \Ncal(0,1),
\end{align*}
where $\overset{D}{=}$ indicates equality in distribution. For the sake of formalizing the approximation in \eqref{eq:chisq_appx}, 
we introduce the function 
\begin{align}
    \alpha(q;\rho,\sigma) := \left( \frac{\sqrt{q}-\sqrt{\rho}}{\sigma}\right)^2
    \label{eq:alpha_def},
\end{align}
and note that 
\begin{align*}
    \lim_{n\to \infty}  \frac{-\log \Pr\left[{ \chi^2(\rho,\sigma) \geq 2q\log(n)}\right]}{\log(n)}  = \alpha(q;r,\sigma),\qquad q > \rho.
\end{align*}
A sequence of distributions $\{Q_i^{(n)}\}_{i=1}^n$ with $\Prp{Q_i^{(n)} \geq 2q\log(n)}>0$ for all $i=1,\ldots,n$ is said to be uniformly moderate chisquared if, for every $q>\rho$,
\begin{align}
\label{eq:APLCC_cond}
    \lim_{n\to \infty} \max_{i=1,\ldots,n} \left| \frac{ -\log \Pr\left[{ Q_i^{(n)} \geq 2q\log(n)}\right]}{\log(n)}  -\alpha(q;\rho,\sigma)\right|=0.
\end{align}
 Namely, we require that the moderate tail probability of $Q_i^{(n)}$ is identical to that of the non-central and scaled chisquared $\chi^2(\rho,\sigma)$. Henceforth, we refer to hypothesis testing problems of the form \eqref{eq:hyp_log_n} in which $\{Q_i^{(n)}\}_{i=1}^n$ is uniformly moderate chisquared as Rare Moderate Departures (RMD) model with log-chisquared parameters $(\rho,\sigma)$. Uniformly moderate chisquaredness is weaker than convergence in distribution in the sense that  \eqref{eq:APLCC_cond} holds whenever
\begin{align}
Q_i^{(n)} \overset{D}{=} \left( \mu_n(\rho) + \sigma Z  \right)^2\left(1+o_p(1)\right),\quad \rho>0,\quad n\to\infty,
    \label{eq:APLCC_cond2}
\end{align}
where $o_p(1)$ indicates a sequence of RVs tending to zero in probability uniformly in $i$ as $n \to \infty$. 

\subsection{Asymptotically Uniform P-values}
\label{sec:randomized_pvals}
We now extend our setting \eqref{eq:hyp_log_n} to situations in which $p_1,\ldots,p_n$ are not uniformly distributed under the null. We do so by considering, instead of \eqref{eq:hyp_log_n}, 
\begin{align}
\begin{split}
    H_0^{(n)} & \quad : \quad -2\log(p_i) \sim E_i^{(n)} ,\quad i=1,\ldots,n, \\
    H_1^{(n)} & \quad : \quad -2\log(p_i) \sim (1-\epsilon)E_i^{(n)} + \epsilon Q_i^{(n)}, \quad i=1,\ldots,n,
    \label{eq:hyp_log_n_appx}
\end{split}
\end{align}
where $Q_i^{(n)}$ satisfies \eqref{eq:APLCC_cond} and where the probability distribution $E_i^{(n)}$ converges to $\lognull$ in the sense that
\begin{align}
    \lim_{n\to \infty} \max_{i=1\ldots,n} \left|\frac{-\log  \Prp{E_i^{(n)} \geq 2q\log(n)} }{\log(n)} - q \right|= 0
    \label{eq:exp_asym}
\end{align}
for every fixed $q>0$. 
This extension of the RMD setting is particularly useful when the distribution of the P-values under the null is only super uniform as in some discrete models \citep{westfall1997multiple}, or when we consider asymptotic P-values rather than exact P-values which is common in large-scale inference from multiple tests \citep{efron2012large}. Most of the properties of RMD models we derive in this paper hold under this extended setting. In this sense, condition \eqref{eq:exp_asym} provides a range of deviation from the specification of the null distribution under which current and previous results concerning rare and moderately large effects hold.

\subsection{Strong Moderate 
Log-Chisquared}
Stronger forms of the chisquared and exponential approximations \eqref{eq:APLCC_cond} and \eqref{eq:exp_asym} are needed to establish an information-theoretic limit of global testing under models \eqref{eq:hyp_log_n} and \eqref{eq:hyp_log_n_appx}. For the chisquared approximation, we require 
\begin{align}
    \lim_{n\to \infty} \max_{i=1\ldots,n} \frac{\left|\log\left( \frac{dQ_i^{(n)}}{d \chi^2(\rho,\sigma)} (2q\log(n))\right)\right|}{\log(n)} = 0
    \label{eq:LR_APLC}
\end{align}
for any $q >\rho$. For the exponential approximation, we require 
\begin{align}
    \lim_{n\to \infty} \max_{i=1\ldots,n} \frac{\left|\log\left( \frac{dE_i^{(n)}}{d \lognull} (2q\log(n))\right) \right|}{\log(n)} = 0
    \label{eq:exp_asym_strong}
\end{align}
for any fixed $q>0$. 
The type of equivalence between $Q_i^{(n)}$ and $\chi^2(\rho,\sigma)$ described in \eqref{eq:LR_APLC} is similar to the setting of \citep{cai2014optimal}. Henceforth, we refer to hypothesis testing problems of the form \eqref{eq:hyp_log_n} under the condition \eqref{eq:LR_APLC} and \eqref{eq:exp_asym_strong} as the \emph{strong} RMD. 
In the Supplementary Material \citep{KipnisLogChisqSupp}, we show that
\eqref{eq:LR_APLC} implies \eqref{eq:APLCC_cond} and that \eqref{eq:exp_asym_strong} implies \eqref{eq:exp_asym}. Note that \eqref{eq:LR_APLC} holds whenever the distribution of each $Q_i^{(n)}$ has a density and satisfies \eqref{eq:APLCC_cond2}.

\subsection{Asymptotic Power and Phase Transition
\label{sec:PT}
}
RMD models experience a \emph{phase transition} phenomenon in the following sense. For some choice of the parameters $r$, $\beta$, and $\sigma$, the two hypotheses are completely indistinguishable. In another region, some tests can asymptotically distinguish $H_1^{(n)}$ from $H_0^{(n)}$ with probability tending to one. Formally, for a given sequence of statistics $\{T_n\}_{n=1}^\infty$, we say that $\{T_n\}_{n=1}^\infty$ is \emph{asymptotically powerful} if there exists a sequence of thresholds $\{h_n\}_{n=1}^\infty$ such that
\[
\Pr_{H_0^{(n)}} \left( T_{n} > h_n \right) + \Pr_{H_1^{(n)}} \left( T_{n} \leq h_n \right) \to 0,
\]
as $n$ goes to infinity. In contrast, we say that $\{T_{n}\}_{n=1}^\infty$ is \emph{asymptotically powerless} if 
\[
\Pr_{H_0^{(n)}} \left( T_{n} > h_n \right) + \Pr_{H_1^{(n)}} \left( T_{n} \leq h_n \right) \to 1,
\]
for any sequence $\{h_n\}_{n\in \mathbb N}$. The so-called phase transition curve is the boundary of the region in the parameter space $(\beta,r)$ in which all tests are asymptotically powerless. \par

Our would-be phase transition curve is
\begin{align}
\rho^*(\beta,\sigma) & :=
\begin{cases}
     (2 - \sigma^2)(\beta - 1/2) & \frac{1}{2} < \beta < 1- \frac{\sigma^2}{4}, \quad 0<\sigma^2<2,\\
    \left(1-\sigma\sqrt{1 -\beta }\right)^2 &  1- \frac{\sigma^2}{4} \leq  \beta < 1, \quad 0<\sigma^2<2,\\
     0 & \frac{1}{2} < \beta <  1-\frac{1}{\sigma^2}, \quad \sigma^2 \geq 2,\\
    \left(1-\sigma\sqrt{1 -\beta }\right)^2 &  1-\frac{1}{\sigma^2} \leq  \beta < 1, \quad \sigma^2 \geq 2.
    \end{cases}
    \label{eq:rho}
\end{align}
Note that
\begin{align}
    \rho^*(\beta,\sigma) = \inf_{r \geq 0} \left\{ \max_{q\in[0,1]} \left[ \frac{q}{2} - \alpha(q;r,\sigma) \right]  > \beta - \frac{1}{2} \right\}.
    \label{eq:rho_as_minmax}
\end{align}

\subsection{Information Theoretic Lower Bound}
One side of the phase transition follows from an information-theoretic lower bound. This bound requires the strong RMD formulation of \eqref{eq:LR_APLC} and \eqref{eq:exp_asym_strong}. 

\begin{theorem}\label{thm:powerlessness}
Consider the hypothesis testing problem \eqref{eq:hyp_log_n_appx}. For every $i=1,\ldots,n$, assume that $Q_i^{(n)}$ is absolutely continuous with respect to $E_i^{(n)}$, and let
\begin{align}
    \label{eq:LR}
    L_i^{(n)}(x) := \frac{d Q_i^{(n)}}{d E_i^{(n)}}(x) 
\end{align}
be the likelihood ratio between the mixture components. Denote
\begin{align}
    \label{eq:alpha_star}
\alpha^*(q; \rho, \sigma) := 2\min_{y\in[r,q]} \left\{\alpha(q;r,\rho) - \frac{y}{2} \right\}.
\end{align}
Suppose that there exists $\gamma>0$ such that, for any $q\in(r,1+\gamma)$,
\begin{subequations}
\label{eq:general_cond}
\begin{align}
\label{eq:general_cond_A}
    \lim_{n \to \infty} \max_{i=1,\ldots,n} \frac{-\log \left( \exsub{X\sim E_i^{(n)}}{L_i^{(n)} (X) \one_{\{X > 2q\log(n) \}}} \right) }{\log(n)} \geq \alpha^*(q; \rho,\sigma),
\end{align}
and
\begin{align}
    \lim_{n\to \infty} \max_{i=1,\ldots,n} \frac{-\log \left( \exsub{X\sim Q_i^{(n)}}{L_i^{(n)} (X) \one_{\{X \leq 2q\log(n) \}}
    }\right)}{\log(n)} \ge \alpha^*(q; \rho,\sigma)
    \label{eq:general_cond_B},
\end{align}
\end{subequations}
If $\rho< \rho^*(\beta,\sigma)$, all tests are asymptotically powerless. 
\end{theorem}
Theorem~\ref{thm:powerlessness} implies
\begin{corollary}
\label{cor:powerlessness}
Consider the hypothesis testing problem \eqref{eq:hyp_log_n_appx} under the strong RMD formulation of \eqref{eq:LR_APLC} and \eqref{eq:exp_asym_strong}. If $\rho < \rho^*(\beta,\sigma)$, all tests are asymptotically powerless.
\end{corollary}

Theorem~\ref{thm:powerlessness} provides conditions for the impossibility of discriminating $H_0^{(n)}$ from $H_1^{(n)}$ in \eqref{eq:hyp_log_n_appx} that are more general than those provided in \citep{cai2014optimal} and in other studies when specialized to our setting.

Figure~\ref{fig:phase_diagram} depicts $\rho^*(\beta,\sigma)$ for three choices of $\sigma$. The function $\rho^*(\beta,\sigma)$ was first derived in \citep{tony2011optimal} to describe the detection boundary of rare and weak normal means with heteroscedastic components. Theorems~\ref{thm:powerlessness} extends this result from \citep{tony2011optimal} to general rare and weak multiple testing models obeying the RMD formulation. We discuss several such models in Section~\ref{sec:models} below.

\begin{figure}
    \begin{center}
    \begin{tikzpicture}
    \begin{axis}[
    width=10cm,
    height=7cm,
    legend style={at={(0.2,1)},
      anchor=north west, legend columns=1},
    ylabel={$\rho$ (intensity)},
    xlabel={$\beta$ (rarity)},
    ytick={0,0.5,1,1.5,2},
    yticklabels={0,0.5,1,1.5,2},
    xtick={0.5,0.6,0.7,0.8,0.9,1},
    xticklabels={.5,.6,.7,.8, .9,1},
    legend cell align={left},
    ymin=0,
    xmin=0.5,
    xmax=1,
    ymax=1.1,
    ]

\addplot[domain=0.4:0.5, color=\sigHalfColor, style=thick, samples = 13, mark=+, mark size=1pt] {x};
\addlegendentry{$\sigma^2=1/2$};

\addplot[domain=0.4:0.5, color=\sigOneColor, style=thick, samples = 13] {x};
\addlegendentry{$\sigma^2=1$};

\addplot[domain=0.4:0.5, color=\sigTwoColor, style=thick, samples = 13, mark=*, mark size=1pt] {x};
\addlegendentry{$\sigma^2=2$};

\def\sig{.5};

\addplot[domain=0.5:1-\sig^2/4, color=\sigHalfColor, style= thick, samples = 13, mark=+, mark size=1pt] {(2-\sig^2)*(x-1/2)};

\addplot[domain=1-\sig^2/4:1, samples = 7, color=\sigHalfColor, style= thick, mark=+, mark size=1pt]
    {(1-\sig*(1-x)^0.5)^2};


\def\sig{1};

\addplot[domain=0.5:1-\sig^2/4, color=\sigOneColor, style=thick, samples = 3] {(2-\sig^2)*(x-1/2)};

\addplot[domain=1-\sig^2/4:1, samples = 31, color=\sigOneColor, style= thick]
    {(1-\sig*(1-x)^0.5)^2};

\def\sig{2};

\addplot[domain=0.5:1-1/(\sig^2), color=\sigTwoColor, style= thick, samples = 13, mark=*, mark size=1pt] {0};

\addplot[domain=1- 1/(\sig^2):1, samples = 21, color=\sigTwoColor, style= thick, mark=*, mark size=1pt]
    {(1-\sig*(1-x)^0.5)^2};
    




\node[left] (topl) at (axis cs:0.6,1.1) {};
\node[right] (botr) at (axis cs:0.94,0.05) {};

\end{axis}


\end{tikzpicture}
\end{center}
    
\caption{
Phase Diagram. 
The phase transition curve $\rho^*(\beta,\sigma)$ of \eqref{eq:rho} defines the detection boundary in all Rare Moderate Departure models. For $\rho < \rho^*(\beta,\sigma)$, all tests are asymptotically powerless. For $\rho > \rho^*(\beta,\sigma)$, some tests, including Higher Criticism and Berk-Johns, are asymptotically powerful. 
} 
\label{fig:phase_diagram}
\end{figure}

\subsection{Optimal Tests
\label{sec:optimal_tests}
}
To complete the phase transition characterization of RMD models initiated in Theorem~\ref{thm:powerlessness}, we consider two tests that are asymptotically powerful whenever $\rho > \rho^*(\beta,\sigma)$.

\subsubsection{Higher Criticism Test}
The Higher Criticism (HC) of the P-values $p_1,\ldots,p_n$ is defined as 
\[
\HC_{n}^* := \max_{1 \leq i \leq n \gamma_0} \sqrt{n} \frac{i/n - p_{(i)}}{\sqrt{p_{(i)}\left(1-p_{(i)}\right)}},
\]
where $p_{(i)}$ is the $i$-th order statistic of $p_1,\ldots,p_n$, and $0<\gamma_0 < 1$ is a fixed parameter \citep{donoho2004higher}. The HC test rejects $H_0^{(n)}$ for large values of $\HC_n^*$. 

In order to characterize the asymptotic power of HC under \eqref{eq:hyp_log_n_appx}, we restrict the potential sub-uniformity of $p_1,\ldots,p_n$ under $H_0^{(n)}$ beyond what is permitted by \eqref{eq:exp_asym} by requiring
\begin{align}
\label{eq:HC_cond}
    \max_{i=1,\ldots,n} -\log \Prp{E_i^{(n)} \geq 2 q \log(n)} \leq q \log(n) - \log(1+n^\frac{q-1}{2})
\end{align}
for all $n$ larger than some $n_0 \in \mathbb N$ and for all $q \in (0,1]$. This restriction is not a concern when $p_1,\ldots,p_n$ are P-values since any super-uniform seqeunce of RVs $\{E_i^{(n)}\}$ satisfies \eqref{eq:HC_cond}. 

\begin{theorem}\label{thm:powerfulness}
Consider the hypothesis testing problem \eqref{eq:hyp_log_n_appx} under \eqref{eq:APLCC_cond}, and suppose that $\{E_i^{(n)}\}$ obey \eqref{eq:exp_asym} and \eqref{eq:HC_cond}. Fix $\gamma_0\in(0,1/2)$. If $\rho > \rho^*(\beta,\sigma)$, then $\HC_{n}^*$ is asymptotically powerful. 
\end{theorem}

\subsubsection{Berk-Jones Test}
Define the P-values
\[
\pi_{i} := \Pr\left( \mathrm{Beta}(i, n-i+1) < p_{(i)} \right),\quad i=1,\ldots,n,
\]
where $\mathrm{Beta}(a,b)$ is the Beta distribution with shape parameters $a,b>0$. The Berk-Jones (BJ) test statistic is defined as \citep{berk1979goodness, moscovich2016exact}
\[
  M_n := \min\{M_n^-,M_n^+\}, \qquad M_n^- := \min_i \pi_{i},\qquad M_n^{+} := \min_i (1-\pi_i).
\]
\begin{theorem}\label{thm:BJpowerfulness}
Consider the hypothesis testing problems \eqref{eq:hyp_log_n} under the RMD condition \eqref{eq:APLCC_cond}. If $\rho >\rho^*(\beta,\sigma)$, than $1/M_n$ is asymptotically powerful. 
\end{theorem}


\subsection{Sub-optimal Tests
\label{sec:other_tests}
}

\subsubsection{Bonferroni and false-discovery rate controlling}
Bonferroni and false-discovery rate (FDR) controlling methods are two popular approaches for inference in a multiple testing scenario \citep{efron2012large}. %
For testing against the family $H_0^{(n)}$, Bonferroni type inference uses the minimal P-value $p_{(1)}$ as the test statistics. The Benjamini-Hochberg (BH) FDR controlling procedure with parameter $q \in (0,1)$ selects the smallest $k^*$ P-values, where $k^*$ is the largest integer $k$ satisfying $p_{(k)}\leq q k/n$ \citep{benjamini1995controlling}. A global test based on this procedure rejects $H_0^{(n)}$ at level $\alpha$ if at least one P-value is selected when $q = h(\alpha)$, for some critical value $h(\alpha)<1$ designed to reject $H_0^{(n)}$ with probability at most $\alpha$ under $H_0^{(n)}$. Namely,
\begin{align}
    \label{eq:FDR_test}
\text{Reject $H_0^{(n)}$ if and only if ~~} \min_{1\leq i\leq n} \frac{p_{(i)}}{i/n} \leq h(\alpha).
\end{align}
Note that since the BH procedure controls the family-wise error rate at level $q$ under $H_0^{(n)}$, one can use $h(\alpha) = \alpha$, but our analysis is not restricted to this choice of $h(\alpha)$. 
\par
For a RMD model, both procedures turn out to be asymptotically powerful (respectively, powerless) within the exact same region. The phase transition curve distinguishing powerfulness from powerlessness is given by 
\begin{align}
\rho_{\Bonf}(\beta,\sigma) := \begin{cases}
\left(1-\sigma\sqrt{1 -\beta }\right)^2, & 1/2<\beta<1,\quad \sigma^2 < 2, \\
\left(1-\sigma\sqrt{1 -\beta }\right)^2 &  1-\frac{1}{\sigma^2} \leq \beta < 1, \quad \sigma^2 > 2, \\
0, &  \beta < 1-\frac{1}{\sigma^2}, \quad \sigma^2 > 2.
    \end{cases}
    \label{eq:rho_bonf}
\end{align}
\begin{theorem} \label{thm:Bonferroni}
Consider the hypothesis testing problem \eqref{eq:hyp_log_n_appx} under the RMD conditions \eqref{eq:APLCC_cond} and \eqref{eq:exp_asym}. $T_n^{\Bonf} = -\log(p_{(1)})$ is
 asymptotically powerless whenever $\rho < \rho_\Bonf(\beta,\sigma)$ and asymptotically powerful whenever $\rho > \rho_{\Bonf}(\beta,\sigma)$.
\end{theorem}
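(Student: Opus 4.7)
The plan is to reduce $T_n^{\Bonf} = -\log p_{(1)}$ to a first-moment estimate for the number of P-values below the Bonferroni threshold $h_n = \alpha/n$. Since $\Pr_{H_0}(p_{(1)} \le h_n) \to 1 - e^{-\alpha}$, powerlessness amounts to $\Pr_{H_1^{(n)}}(p_{(1)} \le h_n)$ converging to the same limit for every fixed $\alpha>0$, while powerfulness amounts to $\Pr_{H_1^{(n)}}(p_{(1)} \le h_n) \to 1$ for a suitable sequence $\alpha_n \downarrow 0$. Writing $N_n = \#\{i: p_i \le h_n\}$ and decomposing $N_n = N_n^{\mathsf{null}} + N_n^{\mathsf{alt}}$ according to the mixture structure of $H_1^{(n)}$, the null part satisfies $\mathbb E N_n^{\mathsf{null}} \to \alpha$ regardless, so the entire asymptotic distinction rests on the behavior of $N_n^{\mathsf{alt}}$.

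The core quantity is the non-null tail $Q_i^{(n)}\bigl([-2\log h_n,\infty)\bigr)$ evaluated at $-2\log h_n = 2\log n + O(1)$. I would argue that the APLC condition \eqref{eq:APLCC_cond} makes this tail $n^{o(1)}$-equivalent to the corresponding tail of $\chi^2(\mu_n,\sigma)$, which by the representation $\chi^2(\mu_n,\sigma) \overset{D}{=}(\mu_n+\sigma Z)^2$ and Mills' ratio satisfies, to leading exponential order,
\[
\Pr\bigl(\chi^2(\mu_n,\sigma) > 2\log n\bigr) \asymp n^{-(1-\sqrt{r})^2/\sigma^2} + n^{-(1+\sqrt{r})^2/\sigma^2},
\]
with the first term dominating in the regime of interest. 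Multiplying by $n\epsilon_n = n^{1-\beta}$ yields, up to sub-polynomial factors,
\[
\mathbb E N_n^{\mathsf{alt}} \asymp n^{\,1-\beta-(1-\sqrt{r})^2/\sigma^2}.
\]
Setting the exponent to zero recovers the boundary $r = (1-\sigma\sqrt{1-\beta})^2$. When $\sigma\sqrt{1-\beta} \ge 1$ (only possible for $\sigma^2 > 2$), Mills' approximation of the one-sided tail degenerates and the symmetric event $\{|Z| > \sqrt{2\log n}/\sigma\}$ gives an exponential rate of $1/\sigma^2$, forcing $\mathbb E N_n^{\mathsf{alt}} \to \infty$ for every $r \ge 0$ whenever $\beta < 1 - 1/\sigma^2$, which is precisely the sub-region where $\rho_\Bonf = 0$.

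With the first-moment estimate in hand, powerlessness follows from Markov's inequality: when the exponent above is strictly negative, $N_n^{\mathsf{alt}} = 0$ with probability tending to $1$, so $p_{(1)}$ inherits its null distribution to leading order. For powerfulness I would exploit independence of the $p_i$ to run a second-moment or Poisson-type argument: divergence of $\mathbb E N_n^{\mathsf{alt}}$ implies $N_n^{\mathsf{alt}} \ge 1$ with probability $\to 1$, hence $p_{(1)} \le h_n$. The main obstacle will be the passage from the pointwise log-density-ratio bound \eqref{eq:APLCC_cond} to uniform control on the tail mass of $Q_i^{(n)}$. I plan to resolve this by noting that $\log d\chi^2(\mu_n,\sigma)/dx$ is Lipschitz of order $O(1)$ in $x$ over an $O(1)$-window around $2\log n$, so that pointwise density-ratio control at $q=2$ suffices to sandwich the tail integral of $Q_i^{(n)}$ between the tails of slightly shifted $\chi^2(\mu_n,\sigma)$ distributions, giving $n^{o(1)}$ equivalence. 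Finally, the corresponding statement for the FDR-controlling statistic \eqref{eq:FDR_test} follows from the same analysis, since $\min_i p_{(i)}/(i/n)$ is dominated by $i=1$ in the regime $\mathbb E N_n^{\mathsf{alt}} \to 0$, whereas divergence of $\mathbb E N_n^{\mathsf{alt}}$ already forces $p_{(1)} \le q h_n$, giving the same phase transition.
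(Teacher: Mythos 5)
Your proposal is correct and follows essentially the same route as the paper: both arguments reduce the Bonferroni test to the single exponent $1-\beta-\alpha(1;r,\sigma)$ with $\alpha(1;r,\sigma)=(1-\sqrt r)^2/\sigma^2$, compare it to zero to recover $\rho_{\Bonf}$, and handle the degenerate region $\beta<1-1/\sigma^2$ exactly as you do; your first/second-moment bookkeeping on $N_n$ and the paper's exact product $\prod_i(1-\Pr_{H_1^{(n)}}[p_i\le a_n])$ are interchangeable for independent P-values. The one technical step where you diverge is the passage from \eqref{eq:APLCC_cond} to the tail mass of $Q_i^{(n)}$: the paper does not use a local Lipschitz sandwich but integrates the density ratio against $\chi_2^2$ over the whole tail and invokes a Laplace-principle lemma (this is Lemma~\ref{lem:cdf}), which is the cleaner way to exploit the fact that the ratio bound holds at every scale $q\log(n)$ rather than only near $2\log(n)$. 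Your closing remark that the FDR result follows ``from the same analysis'' is too quick: powerlessness of the statistic $\min_i p_{(i)}/(i/n)$ also requires ruling out contributions from P-values in the bulk range $(n^{-\delta},1]$, which the paper handles with a separate Chernoff-type argument (Lemma~\ref{lem:F1}) in the proof of Theorem~\ref{thm:FDR}; but that is outside the statement at hand.
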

\begin{theorem}
\label{thm:FDR}
Consider the hypothesis testing problem \eqref{eq:hyp_log_n_appx} under the RMD conditions \eqref{eq:APLCC_cond} and \eqref{eq:exp_asym}. A test based on \eqref{eq:FDR_test} is asymptotically powerless whenever $r < \rho_\Bonf(\beta,\sigma)$ and asymptotically powerful whenever $r>\rho_{\Bonf}(\beta,\sigma)$.
\end{theorem}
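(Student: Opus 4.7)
My approach splits into the two directions. The \emph{powerful} direction reduces directly to Theorem~\ref{thm:Bonferroni}: since $T_n^{\FDR}=\min_i p_{(i)}/(i/n)\leq p_{(1)}/(1/n)=np_{(1)}$, we have $\Pr_{H_1^{(n)}}(T_n^{\FDR}\leq\alpha)\geq\Pr_{H_1^{(n)}}(p_{(1)}\leq\alpha/n)\to 1$ for any fixed $\alpha\in(0,1)$ by Theorem~\ref{thm:Bonferroni}. Combined with $\Pr_{H_0}(T_n^{\FDR}\leq\alpha)=\alpha$ (Step~1 below), letting $\alpha_n\downarrow 0$ sufficiently slowly yields powerfulness.

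The \emph{powerless} direction is the main content. The plan is to prove the stronger statement that $T_n^{\FDR}\overset{d}{\to}\Unif(0,1)$ under $H_1^{(n)}$ when $r<\rho_{\Bonf}$; since the same limit holds under $H_0$, powerlessness of every threshold rule follows by the usual subsequence argument on $\{h_n\}$. Step~1 (null distribution): by Daniels' identity for the uniform empirical process, $\Pr_{H_0}(\sup_{t\in(0,1]}F_n(t)/t\geq c)=1/c$ for every $c\geq 1$, where $F_n$ is the pooled empirical CDF. Since $F_n$ is a step function, $\sup_t F_n(t)/t=\max_i(i/n)/p_{(i)}=1/T_n^{\FDR}$, and we conclude $T_n^{\FDR}\sim\Unif(0,1)$ exactly under $H_0$. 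Step~2 (decomposition): write $F_n=F_n^{(0)}+F_n^{(1)}$ according to hypothesis type, so that
\[
\sup_t F_n^{(0)}(t)/t\;\leq\;\sup_t F_n(t)/t\;\leq\;\sup_t F_n^{(0)}(t)/t+\sup_t F_n^{(1)}(t)/t.
\]
Since $N^{(0)}/n\to 1$ in probability and conditionally the null P-values are iid Uniform, Step~1 applied to $F_n^{(0)}$ gives $\sup_t F_n^{(0)}(t)/t\overset{d}{\to}1/\Unif(0,1)$. The powerless direction thus reduces to
\begin{equation}
\sup_{t\in(0,1]}F_n^{(1)}(t)/t=o_p(1)\quad\text{under }H_1^{(n)}\text{ when }r<\rho_{\Bonf}.\label{eq:FDRKey}
\end{equation}

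Step~3 (proof of \eqref{eq:FDRKey}): split the supremum at $\eta_n=n^{-\gamma}$ for a suitable $\gamma$. For $t\leq\eta_n$, I would show that no non-null P-value falls below $\eta_n$ whp, which reduces to $n\epsilon_n G_n(\eta_n)\to 0$, where $G_n$ is the CDF of a non-null P-value. By the APLC condition \eqref{eq:APLCC_cond}, $G_n$ is asymptotically equivalent to the CDF of $\exp(-\chi^2(\mu_n,\sigma)/2)$; standard Gaussian tail estimates give $G_n(n^{-a})\asymp n^{-(\sqrt{a}-\sqrt{r})_+^2/\sigma^2}$ up to polylogarithmic factors, and $r<\rho_{\Bonf}$ is precisely what makes an admissible $\gamma$ exist. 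For $t\geq\eta_n$, a Bernstein-type concentration inequality for $F_n^{(1)}(t)-\epsilon_n G_n(t)$, uniform in $t$, combined with a deterministic envelope $\sup_{t\geq\eta_n}\epsilon_n G_n(t)/t\to 0$, completes \eqref{eq:FDRKey}.

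The hardest step will be the deterministic envelope $\sup_t\epsilon_n G_n(t)/t\to 0$ for $r<\rho_{\Bonf}$. With $t=n^{-a}$ this reduces to showing $\sup_a h(a)<0$ for the exponent $h(a)=a-\beta-(\sqrt{a}-\sqrt{r})_+^2/\sigma^2$, optimized over the range of $a$ jointly constrained by the Gaussian-tail regime $a>r$ and by the presence of enough non-null mass $a<(\sqrt{r}+\sigma\sqrt{1-\beta})^2$. The two cases in the piecewise definition of $\rho_{\Bonf}$ (according to whether $\sigma^2<2$ or $\sigma^2\geq 2$) correspond precisely to whether the maximizer of $h$ is interior or at a boundary of the admissible interval; in either case, $r<\rho_{\Bonf}$ should translate into $\sup_a h(a)<0$. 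Once \eqref{eq:FDRKey} is established, $T_n^{\FDR}\overset{d}{\to}\Unif(0,1)$ under $H_1^{(n)}$ is immediate, and the desired powerlessness follows.
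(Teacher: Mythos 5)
Your reduction of the powerful direction to Theorem~\ref{thm:Bonferroni} via $T_n^{\FDR}\leq np_{(1)}$ is fine, and your route to the powerless direction --- Daniels' identity for the exact null law of $T_n^{\FDR}$, followed by the decomposition $F_n=F_n^{(0)}+F_n^{(1)}$ --- is genuinely different from the paper's, which instead conditions on the event that no non-null P-value falls below $n^{-\delta}$ and controls the range $(n^{-\delta},1]$ by a Chernoff bound (Lemma~\ref{lem:F1}). The problem is your key claim that $\sup_{t}F_n^{(1)}(t)/t=o_p(1)$, equivalently the envelope $\sup_{t}\epsilon_nG_n(t)/t\to0$: it does not follow from $r<\rho_{\Bonf}(\beta;\sigma)$. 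Writing $t=n^{-a}$, the relevant exponent is
\begin{align*}
h(a)=a-\beta-\frac{\left(\sqrt{a}-\sqrt{r}\right)_+^2}{\sigma^2},\qquad a\in[0,1],
\end{align*}
and $r<\rho_{\Bonf}$ is exactly $h(1)<0$; it controls only the Bonferroni endpoint $a=1$. Two things go wrong elsewhere. First, your restriction to the ``Gaussian-tail regime'' $a>r$ silently discards $a\le r$, where $G_n(n^{-a})=\Theta(1)$ and the ratio is $n^{a-\beta}$, which diverges for $a\in(\beta,r)$ --- a nonempty interval whenever $r>\beta$, and this is compatible with $r<\rho_{\Bonf}$ for small $\sigma$ (e.g.\ $\sigma=0.1$, $\beta=0.51$ gives $\rho_{\Bonf}\approx0.865$, so $r=0.7$ qualifies). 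Second, even on $a>r$, for $\sigma^2<1$ the maximizer of $h$ is interior, at $\sqrt{a}=\sqrt{r}/(1-\sigma^2)$, with value $r/(1-\sigma^2)-\beta$, positive once $r>\beta(1-\sigma^2)$. In the numerical example the obstruction is not an artifact of the proof: roughly $n^{1-\beta}$ non-null P-values concentrate near $n^{-r}$ with $r>\beta$, so $F_n(n^{-a})/n^{-a}\geq n^{a-\beta+o_p(1)}\to\infty$ for $a\in(\beta,r)$, hence $T_n^{\FDR}\to0$ in probability under $H_1^{(n)}$ while it is exactly $\Unif(0,1)$ under $H_0$ --- the FDR test is then powerful, and the powerlessness you are trying to prove fails.

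You are in good company: the paper's own argument stumbles at the same spot. Lemma~\ref{lem:F1} asserts that $r<\rho^{\Bonf}$ implies $\beta+\alpha(1;r,\sigma)<1$, but since $\alpha(1;r,\sigma)=(1-\sqrt{r})^2/\sigma^2$ decreases in $r$, that inequality is equivalent to $r>\rho_{\Bonf}$ (as the proof of Theorem~\ref{thm:Bonferroni} itself uses); moreover its appeal to Lemma~\ref{lem:cdf} is only valid for $q>r$, so the $\epsilon_n\Pr[X_i\le n^{-q}]/n^{-q}=n^{a-\beta}\Theta(1)$ contribution on $a\le r$ is never addressed, and the value of $\delta$ needed there conflicts with the one needed for the event $E_n^\delta$ in the main proof. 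The correct condition, which your decomposition makes visible, is $\sup_{a\in[0,1]}h(a)<0$; this coincides with $r<\rho_{\Bonf}$ when $\sigma^2\ge1$ (there $h$ is increasing on $[r,1]$ and $\rho_{\Bonf}\le\beta$), but is strictly stronger for small $\sigma$. If you either assume $\sigma^2\ge1$ or replace the hypothesis by $\sup_ah(a)<0$, your Steps~1--3 do go through: Daniels gives the exact null law, the split at $\eta_n=n^{-\gamma}$ with $(\sqrt{r}+\sigma\sqrt{1-\beta})^2<\gamma<1$ kills the small-$t$ range, and a Chernoff bound taken uniformly over the $n$ order statistics (rather than a pointwise supremum of probabilities) handles $t\ge\eta_n$.
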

Theorems~\ref{thm:Bonferroni} and \ref{thm:FDR} imply that both Bonferroni and FDR type inference are asymptotically optimal for $\sigma < 2$ only when $\beta<1/2$ or $(4-\sigma^2)/4 < \beta$. This situation is similar to the case of the Gaussian means model studied in \citep{donoho2004higher}, implying that under small variances and moderate rarity the evidence for discriminating $H_0^{(n)}$ from $H_1^{(n)}$ are not amongst sets of the form $\{p_i,\,:\,p_i < qk/n,\,q\in(0,1),\,k=1,\ldots,n\}$. Asymptotically, in this case, optimal discrimination is achieved by considering P-values in the much wider range $\{p_i\,:\, p_i < n^{-(1-\delta)}\}$ for some $\delta>0$. This range is considered by HC and BJ, but not by FDR or Bonferroni. 

\subsubsection{Fisher's Combination Test}
We conclude this section by noting that Fisher's combination test \eqref{eq:fisher} 
is asymptotically powerless for all $\beta>1/2$.  \begin{theorem}
\label{thm:Fisher}
Consider the hypothesis testing problem \eqref{eq:hyp_log_n_appx} under \eqref{eq:APLCC_cond}. $F_n$ of \eqref{eq:fisher} is asymptotically powerless whenever $\beta>1/2$.
\end{theorem}


\section{Examples of Rare Moderate Departures Models \label{sec:models}}
\setcounter{equation}{0}
We consider below various examples of rare and weak multiple testing settings that are carried under our RMD formulation. We summarize those in Table~\ref{table:models}. In most cases, these settings were previously studied, however, without the RMD formulation and without deriving all properties stated in Theorems~\ref{thm:powerlessness}-\ref{thm:Fisher}. We indicate these earlier studies at the end of each example. 


\subsection{Heteroscedastic Normal Mixture
\label{subsec:normal_means}
}

Consider testing the presence of a rare location and variance departure in a Gaussian model as in
\begin{align}
\begin{split}
    H_0 & ~~ : ~~ X_i \sim \Ncal(0,1),\qquad i=1,\ldots,n, \\
    H_1 & ~~ : ~~ X_i \sim (1-\epsilon) \Ncal(0,1) + \epsilon \Ncal(\mu,s^2),\qquad i=1,\ldots,n,
    \label{eq:hyp_normal}
    \end{split}
\end{align}
with $s>0$. The relation between the model \eqref{eq:hyp_normal} to \eqref{eq:hyp_log_n} is via the test
\begin{align}
\label{eq:pval_norm}
p_i = \bar{\Phi}(X_i),\qquad  \bar{\Phi}(x) := \Pr( \Ncal(0,1) > x),\qquad i=1,\ldots,n.
\end{align}
Standard facts about Mills' ratio (see, e.g.,  \citep{grimmett2020probability}) imply
\begin{align}
    \label{eq:Mills_ratio}
-2\log\left(\bar{\Phi}(x)\right) \sim -2\log \left(\frac{\phi(x)}{|x|}\right) = x^2(1+o(1)),
\end{align}
as $x \to \infty$. Consequently, under $H_1$, the distribution of $- 2\log(p_i)$ is of the form
\[
 ( 1-\epsilon)\lognull +  \epsilon\, Q_i(\mu,s),
\]
where $Q_i(\mu,s)$ is a probability distribution obeying
\begin{align}
Q_i(\mu, s) \overset{D}{=} (s Z+ \mu)^2 (1+o_p(1)),\qquad Z \sim \Ncal(0,1),
\label{eq:APLCC_cond3}
\end{align}
as $\mu \to \infty$. For $\mu = \mu_n(r)=\sqrt{2 r \log(n)}$, the last evaluation implies that $Q_i(\mu,s)$ satisfies \eqref{eq:APLCC_cond}. Since each $Q_i(\mu,s)$ also has a density, the P-values of \eqref{eq:pval_norm} 
correspond to the strong RMD model formulation with log-chisquared parameters $(\rho,\sigma) = (r,s)$. 

Previous studies of the setting \eqref{eq:hyp_normal} were conducted by  \citep{tony2011optimal}, which derived the optimal phase transition curve $\rho^*(\beta,\sigma)$ and showed that it is attained by HC of the P-values \eqref{eq:pval_norm}. The homoscedastic case $s^2=1$ was initially studied by \citep{ingster1996some}, \citep{jin2003detecting}, and \citep{donoho2004higher}.

\subsection{Two-Sample Heteroscedastic Normal Mixture}

A two-sample version of \eqref{eq:hyp_normal} takes the form:
\begin{align}
\begin{split}
    H_0 & \, : \, X_i,Y_i \sim \Ncal(\nu_i,1),\quad i=1,\ldots,n, \\
    H_1 & \, : 
    \begin{cases}
    X_i \sim \Ncal(\nu_i,1), \\
    Y_i \sim (1-\epsilon) \Ncal(\nu_i,1) + \epsilon \Ncal(\nu_i+\mu,s^2)
    \end{cases},\quad 
     i=1,\ldots,n,
    \label{eq:hyp_normal_twosample}
    \end{split}
\end{align}
where $\nu_1,\ldots,\nu_n$ is a sequence of \emph{unknown} means. For this setting, consider the P-values
\begin{align}
    \label{eq:p_vals_two_sample}
    p_i := \bar{\Phi}\left( \frac{Y_i-X_i}{\sqrt{2}} \right).
\end{align}
Notice that, with $\tilde{Y}_i \sim \Ncal(\nu_i+\mu,s^2)$ and $X_i \sim \Ncal(\nu_i,1)$, Mills' ratio \eqref{eq:Mills_ratio} implies
\[
-2 \log\left( \bar{\Phi}\left( \frac{\tilde{Y}_i-X_i}{\sqrt{2}}  \right) \right) \overset{D}{=} \left( \sqrt{\frac{1+s^2}{2}}Z + \frac{\mu}{\sqrt{2}} \right)^2 (1+o_p(1)),\qquad Z \sim \Ncal(0,1),
\]
as $\mu \to \infty$. Therefore, under $H_1$, we have that the distribution of $-2\log(p_i)$ is of the form
\begin{align}
    \label{eq:chisquared_twosample}
(1-\epsilon)\lognull + \epsilon 
 Q_i(\mu,s), 
\end{align}
where $Q_i(\mu,s)$ is a probability distribution obeying
\[
Q_i(\mu,s) \overset{D}{=} \left( \sqrt{\frac{1+s^2}{2}}Z + \frac{\mu}{\sqrt{2}} \right)^2 (1+o_p(1)),\qquad Z \sim \Ncal(0,1),
\]
as $\mu\to \infty$. It follows that with $\mu$ calibrated to $n$ as in \eqref{eq:calibration_mu}, $Q_i(\mu,s)$ satisfies 
\eqref{eq:APLCC_cond} with mean parameter $\mu_n'(r) = \mu_n(r)/\sqrt{2} = \sqrt{r \log(n)}$ and scaling parameter $\sqrt{(1+s^2)/2}$, hence the P-values \eqref{eq:p_vals_two_sample} corresponds to the strong RMD model formulation with log-chisquared parameters $\rho = r/2$ and $\sigma = \sqrt{(1+s^2)/2}$. 

In order to derive a phase transition curve for this model, we start from \eqref{eq:rho}, adjusting for the scaling factor $2$ in the non-centrality parameter compared to \eqref{eq:calibration_mu}, and substituting $\sqrt{(1+s^2)/2}$ for the standard deviation. We obtain:
\begin{align}
\label{eq:rho_twosample}
\rho_{\twosample}^*(\beta,s) &:=
\begin{cases}
     (3 - s^2)(\beta - 1/2) & \frac{1}{2} < \beta <  \frac{7-s^2}{8}, \quad 0<s^2<3, \\
    2\left(1-\frac{1+s^2}{2}\sqrt{1 -\beta }\right)^2 &  \frac{7-s^2}{8} \leq  \beta < 1, \quad 0<s^2<3, \\
     0 & \frac{1}{2} < \beta <  \frac{s^2-1}{s^2+1}, \quad s^2 \geq 3,\\
    2\left(1-\frac{1+s^2}{2}\sqrt{1 -\beta }\right)^2 &  \frac{s^2-1}{s^2+1} \leq  \beta < 1, \quad s^2 \geq 3.
    \end{cases}
\end{align}
Figure~\ref{fig:phase_diagram_twosample} depicts $\rho_{\twosample}^*(\beta,s)$ for several values of $s$ and compare it with $2\rho^*(\beta,s)$. 

To the best of our knowledge, the curve $\rho_{\twosample}^*(\beta,s)$ is new; the case $s=1$ was considered in \citep{DonohoKipnis2020}. 

\begin{figure}
    \begin{center}
    \begin{tikzpicture}
    \begin{axis}[
    width=10cm,
    height=7cm,
    legend style={at={(0.2,1)},
      anchor=north west, legend columns=1},
    ylabel={$r$ (intensity)},
    xlabel={$\beta$ (rarity)},
    ytick={0,0.5,1,1.5,2},
    yticklabels={0,0.5,1,1.5,2},
    xtick={0.5,0.6,0.7,0.8,0.9,1},
    xticklabels={.5,.6,.7,.8, .9,1},
    legend cell align={left},
    ymin=0,
    xmin=0.5,
    xmax=1,
    ymax=2.1,
    ]

\addplot[domain=0.4:0.5, color=\sigHalfColor, style=thick, samples = 13, mark=+, mark size=2pt] {x};
\addlegendentry{$s^2=1/2$};

\addplot[domain=0.4:0.5, color=\sigOneColor, style=ultra thick, samples = 13] {x};
\addlegendentry{$s^2=1$};

\addplot[domain=0.4:0.5, color=\sigTwoColor, style=thick, samples = 13, mark=*, mark size=1pt] {x};
\addlegendentry{$s^2=2$};

\def\sigt{.5};
\def\sig{((1+\sigt^2)/2)^0.5};

\addplot[domain=0.5:1-\sig^2/4, color=\sigHalfColor, style=thick, samples = 17, mark=+, mark size=2pt] {2*(2-\sig^2)*(x-1/2)};

\addplot[domain=1-\sig^2/4:1, samples = 11, color=\sigHalfColor, style=thick,mark=+, mark size=2pt]
    {2*(1-\sig*(1-x)^0.5)^2};

\def\sigt{1};
\def\sig{((1+\sigt^2)/2)^0.5};

\addplot[domain=0.5:1-\sig^2/4, color=\sigOneColor, style=thick, samples = 3] {2*(2-\sig^2)*(x-1/2)};

\addplot[domain=1-\sig^2/4:1, samples = 37, color=\sigOneColor, style=thick]
    {2*(1-\sig*(1-x)^0.5)^2};

\def\sigt{2};
\def\sig{((1+\sigt^2)/2)^0.5};

\addplot[domain=0.5:1-1/(\sig^2), color=\sigTwoColor, style=thick, samples = 5,mark=*, mark size=1pt] {0};

\addplot[domain=1- 1/(\sig^2):1, samples = 31, color=\sigTwoColor, style=thick,mark=*, mark size=1pt]
    {2*(1-\sig*(1-x)^0.5)^2};

\def\sig{2};


\addplot[domain=1- 1/(\sig^2):1, samples = 21, color=\sigTwoColor, style= thick, opacity=0.2, mark=*, mark size=1pt]
{2*(1-\sig*(1-x)^0.5)^2};

\def\sig{.5};

\addplot[domain=0.5:1-\sig^2/4, color=\sigHalfColor, style= thick, samples = 13, mark=+, mark size=1pt, opacity=.25] {2*(2-\sig^2)*(x-1/2)};

\addplot[domain=1-\sig^2/4:1, samples = 7, color=\sigHalfColor, style= thick, mark=+, mark size=1pt, opacity=.25]
    {2*(1-\sig*(1-x)^0.5)^2};






\node[left] (topl) at (axis cs:0.6,1.1) {};
\node[right] (botr) at (axis cs:0.94,0.05) {};

\end{axis}


\end{tikzpicture}
\end{center}
    
\caption{ 
Two-Sample Phase Diagram. The phase transition curve $\rho_{\twosample}^*(\beta,s)$ of \eqref{eq:rho_twosample} defines the detection boundary for an asymptotically log-chisquared perturbation model \eqref{eq:hyp_log_n}. For $r< \rho_{\twosample}^*(\beta,s)$, all tests are powerless. For $r > \rho_{\twosample}^*(\beta,s)$, the Higher Criticism and the Berk-Jones tests are asymptotically powerful. The faint lines correspond to $2\rho^*(\beta,s)$, where we have $\rho_{\twosample}^*(\beta,1)=2\rho^*(\beta,1)$. 
} 
\label{fig:phase_diagram_twosample}
\end{figure}

\subsection{Poisson Means}
Consider the hypothesis testing problem
\begin{align}
\begin{split}
    H_0 & \quad : \quad X_i \simiid \Pois(\lambda_i),\quad i=1,\ldots,n, \\
    H_1 & \quad : \quad X_i \simiid (1-\epsilon) \Pois(\lambda_i)+ \epsilon \Pois(\lambda_i'),\quad i=1,\ldots,n,
    \label{eq:hyp_Poiss}
    \end{split}
\end{align}
where $\lambda_1,\ldots,\lambda_n$ is a sequence of \emph{known} means and where each $\lambda_i'$ is obtained by perturbing $\lambda_i$ upwards. For this model, we have the P-values
\begin{align}
    \label{eq:pval_Possion}
p_i = \Pp(X_i;\lambda_i),\quad i=1,\ldots,n,
\end{align}
where $\Pp(x;\lambda_i) := \Prp{ \Pois(\lambda_i) \geq x}$. We suppose that the Poisson rates increase with $n$ such that
\begin{equation}
(\min \lambda_i)/\log(n) \to \infty, 
   \label{eq:lambda_cond}
\end{equation}
and the perturbed means are given by 
\begin{align}
    \label{eq:lambda_prime}
\lambda'_i = \lambda_i + \mu_n(r) \sqrt{\lambda_i}
,\qquad i=1,\ldots,n.
\end{align}

Noting that $\log(n)/\lambda'_i \to 0$ and $\lambda'_i-\lambda_i \to \infty$, the behavior of $p_i$ under $H_1^{(n)}$ is obtained using a moderate deviation estimate of the RVs $\Upsilon_{\lambda'_i} \sim \Pois(\lambda'_i)$. This is provided by the following proposition. 
\begin{prop}
\label{prop:poisson_pvals}
Suppose that $\lambda_i$ and $\lambda_i'$ satisfy $\eqref{eq:lambda_cond}$ and \eqref{eq:lambda_prime}. Let $X_i \simiid \Pois(\lambda_i')$ and $S_i = -2\log \Pp(X_i;\lambda_i)$. 
 For every $q > \rho  \geq 0$,
    \begin{align}
\lim_{n \to \infty} \max_{1\leq i \leq n} \left| \frac{-\log \Prp{ S_i \geq 2q \log(n)} }{\log(n)} - \alpha(q;\rho,1) \right|.
\label{eq:APC_Poisson}
    \end{align}
\end{prop}
We conclude that under $H_1$, \eqref{eq:pval_Possion} -\eqref{eq:lambda_prime}, 
\[
p_i \sim (1-\epsilon) E_i^{(n)} + \epsilon Q_i^{(n)}, \quad i=1,\ldots,n
\]
where $\{Q_i^{(n)}\}_{i=1}^n$ obey \eqref{eq:APLCC_cond} and $\{E_i^{(n)}\}_{i=1}^n$ obey \eqref{eq:exp_asym}. Consequently, the model of \eqref{eq:hyp_Poiss} with P-values \eqref{eq:pval_Possion} is RMD with log-chisquared parameters $\rho=r$ and $\sigma=1$. 


\citep{arias2015sparse} studied the Poisson Rates model \eqref{eq:hyp_Poiss}. They derived the optimal phase transition $\rho^*(\beta,1)$, the Bonferroni phase transition $\rho_{\Bonf}(\beta,1)$, and showed that a version of HC is asymptotically powerful whenever $r >\rho^*(\beta,1)$. 

\subsection{Two-Sample Poisson Means}
\label{sec:two_sample_Poisson}

A two-sample version of \eqref{eq:hyp_Poiss} is given as:
\begin{align}
    \begin{split}
    H_0 &~~ : ~~ X_i,Y_i \simiid \Pois(\lambda_i),\quad i=1,\ldots,n.\\
    H_1 &~~ : ~~ \begin{cases}
    X_i \simiid \Pois(\lambda_i) \\
    Y_i \simiid (1-\epsilon)\Pois(\lambda_i) + \epsilon \Pois(\lambda_i')
    \end{cases},
    \quad i=1,\ldots,n.
        \label{eq:hyp_Poisson_twosample}
    \end{split}
\end{align}
Here $\lambda_1,\ldots,\lambda_n$ is a sequence of \emph{unknown} Poisson rates that satisfy \eqref{eq:lambda_cond}, while $\lambda'_1,\ldots,\lambda'_n$ are defined as in \eqref{eq:lambda_prime}. We summarize the significance of the pair $(X_i, Y_i)$ associated with the $i$-th coordinate by the RV:
\begin{align}
    \label{eq:pvals_Poisson_twosided}
p_i := \bar{\Phi}\left(\sqrt{2 Y_i} - \sqrt{2 X_i}\right).
\end{align}
In order to analyze the behavior of $p_1,\ldots,p_n$ under $H_0^{(n)}$ and $H_1^{(n)}$, note that the transformed Poisson RV $\sqrt{X}$, $X\sim \Pois(\lambda)$, is variance stable:
\[
2\sqrt{X} - 2\sqrt{\lambda} \to \Ncal(0,1).
\]
Under $H_1$,  \eqref{eq:lambda_cond} and \eqref{eq:lambda_prime} imply
\begin{align}
    \label{eq:lambda_hel}
\sqrt{\lambda_i'}(1+o(1)) = \sqrt{\lambda_i} + \mu_n(r)/2,
\end{align}
where $o(1)\to 0$ as $n \to \infty$ uniformly in $i$. Consequently, with $\Upsilon_{\lambda'_i} \sim \Pois(\lambda'_i)$,
\begin{align*}
& \sqrt{2\Upsilon_{\lambda'_i}}-\sqrt{2 X_i} = \sqrt{2\Upsilon_{\lambda'_i}} - \sqrt{2 \lambda'_i}-\left(\sqrt{2 X_i} -  \sqrt{2\lambda_i} \right) + \left(\sqrt{2 \lambda'_i} - \sqrt{2 \lambda_i} \right) \\
 & \overset{D}{=} \left(Z + \mu_n(r)/\sqrt{2}\right)(1+o_p(1)),\qquad Z\sim \Ncal(0,1),
\end{align*}
as $n\to \infty$. By setting
\[
\pi_i := \bar{\Phi}\left(\sqrt{2\Upsilon_{\lambda'_i}} - \sqrt{2 X_i} \right), 
\]
combining Mill's ratio \eqref{eq:Mills_ratio} and \eqref{eq:lambda_hel}, we obtain
\begin{align}
    \label{eq:two_sample_Poisson_H1}
-2 \log (\pi_i) & \overset{D}{=} (Z+ \mu_n(r)/\sqrt{2})^2(1+o_p(1))  \\
& = (Z+ \sqrt{r \log(n)})^2(1+o_p(1)).
\nonumber
\end{align}
The last evaluation suggests that \eqref{eq:APLCC_cond2} holds with log-chisquared parameters $(\rho,\sigma) = (r/2,1)$. 
The full argument follows from the proposition below. 
\begin{prop}
    \label{prop:twosample_Poisson}
    Suppose that $\lambda_1,\ldots,\lambda_n$ satisfy
    \[
\min \lambda_i / \log(n) \to \infty \quad \text{as} \quad n \to \infty.
    \]
    Set $\lambda_i'(r) := \lambda_i + \sqrt{r \lambda_i \log(n)}$ for some $r \geq 0$. Assume that $X \sim \Pois(\lambda_i)$, $Y \sim \Pois(\lambda'_i)$, and let 
    \[
\pi_i := \bar{\Phi}\left(\sqrt{2Y_i} - \sqrt{2X_i} \right),
\quad i=1,\ldots,n.
    \]
    Then 
    \begin{align}
\lim_{n \to \infty} \max_{1\leq i \leq n} \left| \frac{-\log \Prp{ -2\log(\pi_i) \geq 2q \log(n)} }{\log(n)} - \alpha(q;r/2,1) \right| = 0 . \nonumber
    \end{align}
\end{prop}

\citep{DonohoKipnis2020} studied a two-sided perturbation model similar to \eqref{eq:hyp_Poisson_twosample} and proposed to use P-values of an exact binomial test as in
\begin{align} \label{eq:pvals_binomial_allocation}
p'_i := \Prp{ \left|\Bin(X_i+Y_i,1/2) - \frac{X_i+Y_i}{2}\right| \leq \left|\frac{X_i-Y_i}{2}\right| },
\end{align}
which have several advantages over \eqref{eq:pvals_Poisson_twosided} in practice. Our RMD formulation shows that the asymptotic properties of the tests in Section~\ref{sec:APLC} based on either collection of P-values under \eqref{eq:hyp_normal_twosample} are identical, e.g., the optimal phase transition is given by $\rho^*_{\twosample}(\beta,1)$.

\subsection{Perturbed Binomial Experiments \label{sec:binomial}}
Suppose that our data consists of $n$ independent samples from a binomial distribution:
\begin{align}
    \label{eq:binomial_data}
X_i \simiid \Bin(m_i,q_i),\quad m_i \in \mathbb N, \quad i = 1,\ldots,n.
\end{align}
We are interested in testing the null hypothesis $H_0\,:\,q_1= \ldots =q_n=1/2$ against an alternative in which we have $q_i > 1/2$ for a small fraction of the indices. It is natural to use P-values from the exact binomial test
\begin{align}
\label{eq:pvals_binomial}
p_i &:= p_{\mathsf{Bin}}(X_i) := \Prp{ \Bin(m_i,1/2) \geq X_i},
\end{align}
although other options are available, e.g. testing for overdispersion \citep{dean1992testing}. The alternative hypothesis is specified as
\[
H_1^{(n)} \,:\, X_i \simiid (1-\epsilon) \Bin(m_i, 1/2) + \epsilon\Bin(m_i, 1/2 + \delta)  ,\quad i=1,\ldots,n, 
\]
for small $\delta$ and $\epsilon$. 

For $X_i \sim \Bin(m_i, q_i)$, the normal approximation 
\[
X_i  \approx \Ncal\left(m_i q_i, m_i q_i(1-q_i) \right).
\]
suggests that deviations on the moderate scale arise by the calibration
\begin{align}
\label{eq:binomial_calibration}
    m_i & = \frac{2\log(n)}{s}(1+o(1))\quad \text{and}\quad 
    \delta = \sqrt{s \cdot r/4},
\end{align}
where $s>0$ and $r \geq 0$ are parameters satisfying $r\cdot s < 1$. The proposition below implies that under such calibration and with $\epsilon = n^{-\beta}$ for $\beta \in (1/2,1)$,  \eqref{eq:binomial_data} with the P-values \eqref{eq:pvals_binomial} corresponds to RMD model with $\rho=r$ and $\sigma^2 = 1 - s r$. 
\begin{prop}
\label{prop:binomial}
Consider $X \sim \Bin(m, 1/2 + \delta)$ with $\delta$ and $m$ calibrated to $n$ as in \eqref{eq:binomial_calibration}. Then for all $q \geq r$, 
\begin{align}
\label{eq:prop:binomial}
    \lim_{n \to \infty} \max_{1 \leq i \leq n} \left| \frac{- \log \Prp{-2\log \left(p_{\mathsf{Bin}}(X)\right)} \geq 2 q \log(n)}{\log(n)} - \alpha\left(q; r, \sqrt{1 - s \cdot r} \right) \right| = 0.
\end{align}
\end{prop}
The optimal phase transition of the RMD model corresponding to \eqref{eq:binomial_data} under \eqref{eq:pvals_binomial} is defined by triplets $(s, r, \beta)$ satisfying 
\begin{align*}
    r = \begin{cases}
        (1 + s \cdot r) \left( \beta - 1/2 \right) & \frac{1}{2} < \beta < \frac{3+s \cdot r}{4}, \\
        \left(1 - \sqrt{(1-s \cdot r)(1-\beta)}  \right)^2 &  \frac{3+s \cdot r}{4} \leq \beta < 1. 
    \end{cases}
\end{align*}
When $s < 1/(\beta-1/2)$, the last relation defines the curve 
\begin{align}
\label{eq:rho_bin}
    \rho^*_{\mathsf{Bin}}(\beta,s) := \begin{cases}
        \frac{\beta - \frac{1}{2}}{1 - s(\beta-\frac{1}{2})} & \frac{1}{2} < \beta < \frac{3 + \frac{1 - \sqrt{1-s}}{1+ \sqrt{1-s}}}{4}, \quad s \leq 1,\\
        \left( \frac{1 - \sqrt{(1-\beta)(1-s\beta)} }{1 + s(1-\beta)} \right)^2 & 
\frac{3 + \frac{1 - \sqrt{1-s}}{1+ \sqrt{1-s}}}{4}
         \leq \beta < 1,\quad  s \leq 1,\\
         \frac{\beta - \frac{1}{2}}{1 - s(\beta-\frac{1}{2})} & \frac{1}{2} < \beta < \frac{1}{2} + \frac{1}{s}, \quad  s \geq 1,\\
         \infty & \frac{1}{2} + \frac{1}{s} \leq \beta, \quad  s \geq 1; 
    \end{cases}
\end{align}
see an illustration in Figure~\ref{fig:phase_diagram_binomial}.  Consequently, for $r < \rho^*_{\mathsf{Bin}}(\beta,s)$ or $s>1/(\beta - 1/2)$, all tests are asymptotically powerless while some tests are asymptotically powerful when $s \leq 1$ and $r > \rho^*_{\mathsf{Bin}}(\beta,s)$. 

To the best of our knowledge, the curve $\rho_{\mathsf{Bin}}^*(\beta,s)$ is new. \citep{mukherjee2015hypothesis} studied the model \eqref{eq:binomial_data} in the context of sparse binary regression under a coarser calibration that corresponds to the limit $s \to 0$ in \eqref{eq:binomial_calibration}. In this case, $\rho^*_{\mathsf{Bin}}(\beta,s)$ converges to $\rho^*(\beta,1)$, in accordance with the results of \citep{mukherjee2015hypothesis}. 

\begin{figure}
    \begin{center}
    \begin{tikzpicture}
    \begin{axis}[
    width=10cm,
    height=7cm,
    legend style={at={(0.2,1)},
      anchor=north west, legend columns=1},
    ylabel={$\rho$ (intensity)},
    xlabel={$\beta$ (rarity)},
    ytick={0,0.5,1,1.5,2},
    yticklabels={0,0.5,1,1.5,2},
    xtick={0.5,0.6,0.7,0.8,0.9,1},
    xticklabels={.5,.6,.7,.8, .9,1},
    legend cell align={left},
    ymin=0,
    xmin=0.5,
    xmax=1,
    ymax=2,
    ]

\addplot[domain=0.4:0.45, color=\sigHalfColor, style=thick, samples = 2, mark=-, mark size=1pt] {x};
\addlegendentry{$s=9/4$};

\addplot[domain=0.4:0.45, color=\sigTwoColor, style=thick, samples = 2, mark=+, mark size=1pt] {x};
\addlegendentry{$s=5/4$};

\addplot[domain=0.4:0.45, color=\sigOneColor, style=thick, samples = 2, mark=*, mark size=1pt] {x};
\addlegendentry{$s=1/2$};

\addplot[domain=0.4:0.45, color=black, style=thick, samples = 2, opacity=.4] {x};
\addlegendentry{$\rho^*(\beta,1)$};


\def\sig{1};

\addplot[domain=0.5:1-\sig^2/4, color=black, style=thick, samples = 3, opacity=.4] {(2-\sig^2)*(x-1/2)};

\addplot[domain=1-\sig^2/4:1, samples = 31, color=black, style= thick, opacity=.25]
    {(1-\sig*(1-x)^0.5)^2};

\def\sig{.5};
\def\betazero{(3 + (1-(1-\sig)^0.5)/(1+(1-\sig)^0.5))/4 };

\addplot[domain=0.5: \betazero, color=\sigOneColor, style= thick, samples = 13, mark=*, mark size=1pt] {(x - 1/2)/(1 - \sig*(x - 1/2)};

\addplot[domain=\betazero:1, samples = 17, color=\sigOneColor, style= thick, mark=*, mark size=1pt]
    {( (1 - ((1-\sig*x)*(1-x))^0.5)/(1 + \sig*(1-x) ))^2 };

\def\sig{1.25};

\addplot[domain=0.5: 1, color=\sigTwoColor, style= thick, samples = 17, mark=+, mark size=1pt] {(x - 1/2)/(1 - \sig*(x - 1/2)};

\def\sig{2.25};

\addplot[domain=0.5:0.94, color=\sigHalfColor, style= thick, samples = 17, mark=-, mark size=1pt] {(x - 1/2)/(1 - \sig*(x - 1/2)};

\addplot[color=\sigHalfColor, style= dashed, mark size=1pt]coordinates{(0.944,0) (0.944,2)};

\node[left] (topl) at (axis cs:0.6,1.1) {};
\node[right] (botr) at (axis cs:0.94,0.05) {};

\end{axis}

\end{tikzpicture}
\end{center}
    
\caption{
Phase transitions of multiple binomial experiments with perturbed success probabilities of \eqref{eq:binomial_data}. The phase transition curve $\rho_{\mathsf{Bin}}^*(\beta,s)$ of \eqref{eq:rho_bin} defines the detection boundary in the multiple binomials model of \eqref{eq:binomial_data} under the calibration \eqref{eq:binomial_calibration}. The parameter $s$ controls the number of experiments in individual binomial trials according to \eqref{eq:binomial_calibration} (larger $s$ means fewer trials). $\rho_{\mathsf{Bin}}^*(\beta,s=9/4)$ asymptotes to the dashed line. The case $s \to 0$ corresponds to the homoscedastic case analyzed in \citep{mukherjee2015hypothesis}. 
} 
\label{fig:phase_diagram_binomial}
\end{figure}

\section{Log-Chisquared versus Log-Normal
\label{sec:logchisq_vs_lognorm}}
\setcounter{equation}{0}

\subsection{The Log-normal Approximation}

The log-normal approximation of a P-value under the alternative hypothesis is a tool developed by Bahadur to study the interplay among the test's size, power, and the ``cost'' of attaining new data which is most commonly associated with the sample size \citep{bahadur1960stochastic, gleser1964measure, lambert1982asymptotic}. Informally, suppose that the alternative hypothesis is characterized by a parameter $\theta$ and that $a_n$ is a sequence tending to infinity with $n$ describing the cost of sampling from the population of interest. Bahadur's log-normal approximation says that, under some conditions including asymptotic normality of the test statistic, a P-value $\pi$ under the alternative $H_1 = H_1(\theta,a_n)$ obeys
\begin{align}
\label{eq:lognormal_conv}
\frac{\log(\pi) + a_n c(\theta)}{\sqrt{a_n}} \overset{D}{\longrightarrow} \Ncal\left(0, \tau^2(\theta) \right),
\end{align}
as $n \to \infty$. In the terminology of \citep{lambert1982asymptotic}, 
$c(\theta)$ is Bahadur's half-slope describing the asymptotic behavior of the test's size, i.e., the rate at which $\pi$ goes to zero. The test's power is determined both by $\tau(\theta)$ and $c(\theta)$. It is convenient to write \eqref{eq:lognormal_conv} as 
\begin{align}
-2\log(\pi) \overset{D}{\approx}  \Ncal\left(a_n c(\theta), a_n\tau^2(\theta) \right).
\label{eq:lognormal_appx}
\end{align}

In the sections below, we compare our log-chisquared approximation to the log-normal approximation of \eqref{eq:lognormal_appx} for P-values under the alternative hypothesis.

\subsection{Formal Comparison}
It is well-recognized that \eqref{eq:lognormal_appx} is a \emph{large deviation} estimate of the test statistic in the sense that $c(\theta)$ is a transformation of the statistic's rate function whenever this statistic satisfies a large deviation principle \citep{sievers1969probability,  gleser1980large, stanford1980large}. In contrast, in all RMD models of Section~\ref{sec:models} the alternative hypothesis corresponds to a \emph{moderate deviation} of each test statistic from its null \citep{RubinSethuraman1965}. Consequently, the log-normal approximation of \eqref{eq:lognormal_appx} cannot correctly indicate the asymptotic power of tests under the RMD formulation.

We formally show this last point in the homoscedastic RMD normal \newtext{mixture} model:
\begin{align}
\label{eq:hyp_normal_simple}
\begin{split}
    H_0 \, &: \, X_i \simiid \Ncal(0,1),\quad  i=1,\ldots,n, \\
    H_1 \, &: \, X_i \simiid (1-\epsilon)\Ncal(0,1) + \epsilon \Ncal(\mu,1), ,\quad i=1,\ldots,n,
\end{split}
\end{align}
with the P-values $p_i = \bar{\Phi}(X_i)$. This is the model \eqref{eq:hyp_normal} with $s=1$. Under $H_1$, 
\begin{align}
\begin{split}
-2\log p_i \sim (1-\epsilon)\lognull + \epsilon Q_i,
\label{eq:mixture} 
\end{split}
\end{align}
where the probability distribution $Q_i$ is the subject of our approximation. When $\mu$ and $\epsilon$ are calibrated to $n$ as in \eqref{eq:calibration_mu} and \eqref{eq:calibration_eps}, respectively, we propose in this paper the log-chisquared approximation
\begin{align}
Q_i = Q_i^{(n)} \overset{D}{\approx} \left(\mu_n(r) +  Z\right)^2.
\label{eq:LC_appx}
\end{align}
On the other hand, we have
\begin{align*}
(\mu + Z)^2 = (\mu^2+ 2 \mu Z)(1+o_{p}(1)),\quad \mu \to \infty,
\end{align*}
implying the log-normal approximation:
\begin{align}
    \label{eq:LN_appx1}
Q_i^{(n)} \overset{D}{\approx} \Ncal(\mu_n^2(r),4 \mu_n^2(r) ) = \Ncal\left(2 r\log(n),8 r\log(n)  \right).
\end{align}
In particular, $\theta=r$, $a_n = \log(n)$, $c(\theta)=2r$, and $\tau^2(\theta)= 8 r$ in the notation of \eqref{eq:lognormal_appx}. We now compare approximations \eqref{eq:LC_appx} and \eqref{eq:LN_appx1}. Observe that the success of HC and the BJ tests follows from the behavior of 
\[
\Pr( \pi_i <  n^{-q}), \qquad -2\log(\pi_i) \sim Q_i^{(n)}
\]
for $r < q < 1$ as $n\to \infty$; see the proofs of Theorems~\ref{thm:powerfulness} and \ref{thm:BJpowerfulness} in the Supplementary Materials and an extended analysis in \cite{donoho2021impossibility}. With $Q_i^{(n)}$ as in \eqref{eq:LC_appx}, 
\begin{align}
\Pr( \pi_i < n^{-q}) & =
\Pr(-2\log(\pi_i) >  2q \log(n)) \nonumber \\
& \sim \Pr\left( (\mu_n(r)+Z)^2 \geq 2q \log(n) \right) \label{eq:true_appx}  \\
& \sim \Pr\left( Z \geq \sqrt{\log(n)} (\sqrt{2q}-\sqrt{2r}) \right) \nonumber.
\end{align}
A standard evaluation of the behavior of HC under $H_1^{(n)}$ uses \eqref{eq:true_appx} to show that it is asymptotically powerful for $r>\rho(\beta,1)$ \citep{donoho2004higher}. On the other hand, with $Q_i^{(n)}$ as in \eqref{eq:LN_appx1}, 
\begin{align}
\Pr( \pi_i <  n^{-q})  & =
\Pr(-2\log(\pi_i) >  2q \log(n)) \nonumber \\ 
& \sim \Pr\left( \mu_n(r)^2 + 2  \mu_n(r) Z \geq 2q \log(n) \right) \nonumber  \\
& = \Pr\left( Z \geq \sqrt{\log(n)} \frac{q - r}{\sqrt{2r}} \right) \label{eq:wrong_appx}.
\end{align}
Since 
\[
\frac{q-r}{\sqrt{2r}} \geq \sqrt{2q}-\sqrt{2r},\quad q \geq r > 0,
\]
using the log-normal approximation in a formal exercise of would-be power analysis of HC by replacing \eqref{eq:true_appx} with \eqref{eq:wrong_appx}, incorrectly predicts that HC is powerless for some $r > \rho(\beta,1)$. Specifically, the log-normal approximation incorrectly predicts the phase transition curve:
\begin{align*}
\rho^\dagger(\beta,1) = 
\begin{cases}
\frac{2}{3}\left(2\beta-1\right) &  \frac{7}{8} < \beta, \\
3 - 2\beta - 2\sqrt{(2-\beta)(1-\beta)} & \frac{1}{2} < \beta \leq \frac{7}{8}, 
\end{cases}
\end{align*}
which satisfies $\rho^\dagger(\beta,1) > \rho(\beta,1)$ for $\beta \in (1/2,1)$. 

\subsection{Empirical Comparison}
We now provide an empirical comparison between the log-normal and the log-chisquared approximation under moderate departures using Monte Carlo simulations involving fixed effects (i.e., not rare). In each simulation, we sample data $x_1,\ldots,x_n$ independently from $\Ncal(\mu_n(r),1)$ and consider $\pi_i = \bar{\Phi}(x_i)$ as a P-value under $H_0\,:\, X_i\simiid \Ncal(0,1)$, $i=1,\ldots,n$. 
Of course, in this model, we can characterize the distribution of $\pi_i$ analytically, as well as the deviation of this distribution from the log-chisquared and the log-normal distributions, respectively. The purpose of the simulation is to illustrate the better fit of the log-chisquared approximation. Figure~\ref{fig:sim1} illustrates the results of one simulation with $n=1,000$ (top panels) and one simulation with $n=100,000$ (bottom panels), while the departure intensity parameter $r=1$ is fixed in both cases. The panels on the left show the histogram of $\{-2\log(\pi_i)\}_{i=1}^n$ with the density of the normal distribution $\Ncal(\hat{\mu},\hat{\sigma}^2)$ and the density of the noncentral chisquared distribution $\chi^2_2(\hat{\lambda})$, where $\hat{\mu}$ and $\hat{\sigma}^2$ are the standard mean and variance estimates and $\hat{\lambda} = \hat{\mu}-2$ is the non-centrality estimate. The middle and right panels illustrate QQ-plots of the empirical distribution of $\{-2\log(\pi_i)\}_{i=1}^n$ against 
 $\chi^2_2(\hat{\lambda})$ and $\Ncal(\hat{\mu},\hat{\sigma}^2)$, respectively, showing the better fit of the empirical distribution of $\{-2\log(\pi_i)\}_{i=1}^n$ to $\chi^2_2(\hat{\lambda})$.
 
\begin{figure}
    \centering
    \def \muhat {17.22}
\def \sighat {8.15}
\def \nchat {15.22}
\def \maxvalx {60}
\def \maxvalxfig {60}
\def \maxvaly {0.07}
\def \maxvalyfig {0.07}
\def \maxvalq {60}

\pgfplotsset{scaled y ticks=false}

\begin{tikzpicture}[scale = 1]
	\begin{axis}[
    width=6cm,
    height=4.5cm,
    legend style={at={(1,1)},
      anchor=north, legend columns=1},
    xlabel={\scriptsize $-2\log(p_i)$},
    xlabel style={at={(0.5,0)}},
    ylabel={density},
    ylabel style={at={(0, 0.5)}},
    xtick={0,\maxvalx},
    xticklabels={\scriptsize $0$, \scriptsize $\maxvalx$},
    ytick={\maxvaly},
    yticklabels={\scriptsize \maxvaly},
    ymin=0,
    xmin=0,
    xmax=60,
    ymax=\maxvaly,
    title = {\footnotesize \bf Histogram of $-2\log(p_i)$}
    ]

\addplot graphics
		[xmin=0, xmax=\maxvalxfig, ymin=0, ymax=\maxvalyfig,
		includegraphics={scale=.6, trim=44 38 35 42, clip}]
		{./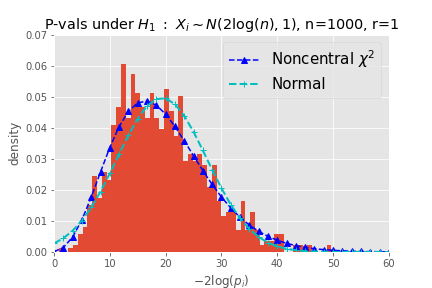};
\end{axis}
\end{tikzpicture}
\begin{tikzpicture}[scale = 1]
	\begin{axis}[
    width=4.5cm,
    height=4.5cm,
    legend style={at={(1,1)},
      anchor=north, legend columns=1},
    ylabel={\footnotesize $Q$-sample},
    xlabel style={at={(0.5,0)}},
    xlabel={\footnotesize $Q$-ChiSq},
    ylabel style={at={(0, 0.5)}},
    xtick={0,\maxvalq},
    xticklabels={\scriptsize $0$, \scriptsize $\maxvalq$},
    ytick={\maxvalq},
    yticklabels={\scriptsize $\maxvalq$},
    ymin=0,
    xmin=0,
    xmax=\maxvalq,
    ymax=\maxvalq,
    title = {\scriptsize \bf Chisquared QQ-Plot }
    ]

\addplot graphics
		[xmin=0, xmax=\maxvalq, ymin=0, ymax=\maxvalq,
		includegraphics={scale=.6, trim=44 38 35 42, clip}]
		{./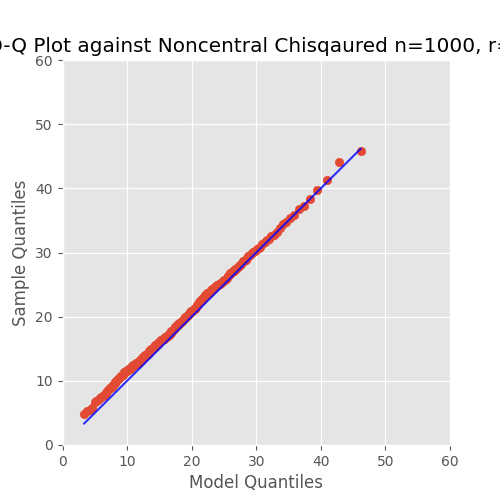};
\end{axis}
\end{tikzpicture}
\begin{tikzpicture}[scale = 1]
	\begin{axis}[
    width=4.5cm,
    height=4.5cm,
    legend style={at={(1,1)},
      anchor=north, legend columns=1},
    ylabel={\footnotesize $Q$-sample},
    xlabel style={at={(0.5,0)}},
    xlabel={\footnotesize $Q$-Normal},
    ylabel style={at={(0, 0.5)}},
    xtick={0,\maxvalq},
    xticklabels={\scriptsize $0$, \scriptsize $\maxvalq$},
    ytick={60},
    yticklabels={\scriptsize $\maxvalq$},
    ymin=0,
    xmin=0,
    xmax=\maxvalq,
    ymax=\maxvalq,
    title = {\footnotesize \bf Normal QQ-Plot}
    ]

\addplot graphics
		[xmin=0, xmax=\maxvalq, ymin=0, ymax=\maxvalq,
		includegraphics={scale=.6, trim=44 38 35 42, clip}]
		{./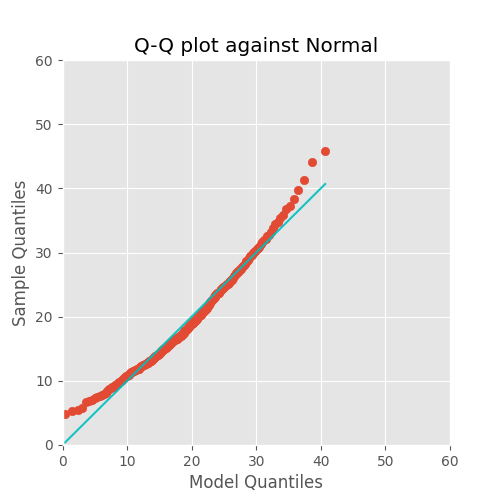};
\end{axis}
\end{tikzpicture}
    
    \hspace{-5pt}\def \muhat {17.22}
\def \sighat {8.15}
\def \nchat {15.22}
\def \maxvalx {60}
\def \maxvalxfig {80}
\def \maxvaly {0.07}
\def \maxvalyfig {0.07}
\def \maxvalq {60}

\pgfplotsset{scaled y ticks=false}

\begin{tikzpicture}[scale = 1]
	\begin{axis}[
    width=6cm,
    height=4.5cm,
    legend style={at={(1,1)},
      anchor=north, legend columns=1},
    xlabel={\footnotesize $-2\log(p_i)$},
    xlabel style={at={(0.5,0)}},
    ylabel={density},
    ylabel style={at={(0, 0.5)}},
    xtick={0,\maxvalx},
    xticklabels={\scriptsize $0$, \scriptsize $\maxvalx$},
    ytick={\maxvaly},
    yticklabels={\scriptsize $\maxvaly$},
    ymin=0,
    xmin=0,
    xmax=\maxvalx,
    ymax=\maxvaly,
    ]

\addplot graphics
		[xmin=0, xmax=\maxvalxfig, ymin=0,ymax=\maxvalyfig,
		includegraphics={scale=.35, trim=55 36 -68 36, clip}]
		{./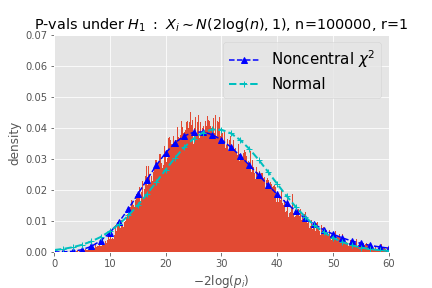};
\end{axis}
\end{tikzpicture}
\begin{tikzpicture}[scale = 1]
	\begin{axis}[
    width=4.5cm,
    height=4.5cm,
    legend style={at={(1,1)},
      anchor=north, legend columns=1},
    ylabel={\footnotesize Q-sample},
    xlabel style={at={(0.5,0)}},
    xlabel={\footnotesize Q-ChiSq},
    ylabel style={at={(0, 0.5)}},
    xtick={0,\maxvalq},
    xticklabels={\scriptsize $0$, \scriptsize $\maxvalq$},
    ytick={\maxvalq},
    yticklabels={\scriptsize $\maxvalq$},
    ymin=0,
    xmin=0,
    xmax=\maxvalq,
    ymax=\maxvalq,
    ]

\addplot graphics
		[xmin=0, xmax=\maxvalq, ymin=0, ymax=\maxvalq,
		includegraphics={scale=.6, trim=44 38 35 42, clip}]
		{./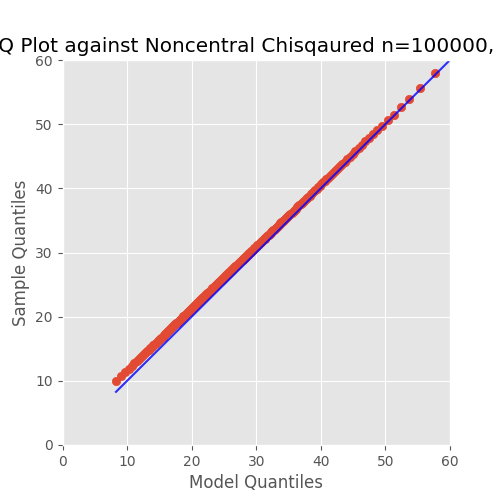};
\end{axis}
\end{tikzpicture}
\begin{tikzpicture}[scale = 1]
	\begin{axis}[
    width=4.5cm,
    height=4.5cm,
    legend style={at={(1,1)},
      anchor=north, legend columns=1},
    ylabel={\footnotesize Q-sample},
    xlabel style={at={(0.5,0)}},
    xlabel={\footnotesize Q-Normal},
    ylabel style={at={(0, 0.5)}},
    xtick={0,\maxvalq},
    xticklabels={\scriptsize $0$, \scriptsize $\maxvalq$},
    ytick={\maxvalq},
    yticklabels={\scriptsize $\maxvalq$},
    ymin=0,
    xmin=0,
    xmax=\maxvalq,
    ymax=\maxvalq,
    ]

\addplot graphics
		[xmin=0, xmax=\maxvalq, ymin=0, ymax=\maxvalq,
		includegraphics={scale=.6, trim=44 38 35 42, clip}]
		{./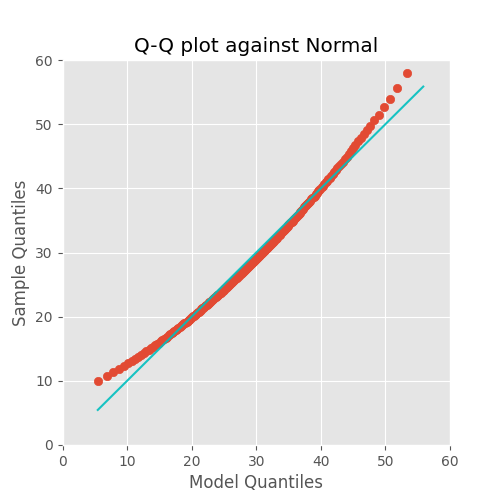};
\end{axis}
\end{tikzpicture}
    
    \caption{
    Comparing log-normal and log-chisquared approximations to moderately perturbed P-values $\pi_1,\ldots,\pi_n$. Here $\pi_i \sim \bar{\Phi}(X_i)$, $X_i=\Ncal(\sqrt{2r\log(n)},1)$, with $n=10^3$ (top) and $n=10^5$ (bottom).  
    Left: histogram of $\{-2\log(\pi_i)\}_{i=1}^n$. Middle: QQ-plots of the empirical distribution of $\{-2\log(\pi_i)\}_{i=1}^n$ against the noncentral chisquared distribution. Right: QQ-plots of the empirical distribution of  $\{-2\log(\pi_i)\}_{i=1}^n$ against the normal distribution.}
    \label{fig:sim1}
\end{figure}

Figure~\ref{fig:sim2} illustrates the results of $1,000$ Monte Carlo simulations with many configurations of $n$ and $r$. For each configuration, we conducted an 
Anderson-Darling (AD) goodness-of-fit test of $\{-2\log(p_i)\}_{i=1}^n$ against
$\chi^2_2(\hat{\lambda})$ and $\Ncal(\hat{\mu},\hat{\sigma}^2)$. The test against the normal (respectively, chisquared) rejects when the AD statistic exceeds its simulated $.95$-th quantile under the null obtained by sampling $10,000$ times from the normal (chisquared) distribution. \newtext{It follows from Figure~\ref{fig:sim2} that the log-chisquared distribution fits the distribution of the P-values under fixed moderate effects much better than the log-normal distribution. The lack of fit of the log-chisquared distribution is only visible when the sample size is large or when the signal is very weak. The analysis in Section~\ref{sec:APLC} implies that this lack of fit is insignificant when the effects are also rare in the sense that the log-chisquared approximation provides the information-theoretic limit of signal detection under RMD. 
}

\begin{figure}
    \begin{center}
    \def \nmax {10000}
\def \rmax {12}
\pgfplotsset{scaled y ticks=false}
\pgfplotsset{scaled x ticks=false}
\begin{tikzpicture}[scale = 1, every mark/.append style={mark size=1pt}]
	\begin{axis}[
    width=7cm,
    height=5.5cm,
    legend style={at={(1,0.45)},
      anchor=north east, legend columns=1},
    xlabel={\footnotesize $r$ (intensity)},
    ylabel={\footnotesize rejection rate},
    xtick={0.1, 5, 10},
    xticklabels={\scriptsize $0.1$, \scriptsize $5$, \scriptsize $10$},
    ytick={0.05, 0.5 ,1},
    yticklabels={\scriptsize $0.05$ ,\scriptsize $0.5$, \scriptsize $1$},
    ymin=0,
    xmin=0.1,
    xmax=\rmax,
    ymax=1.05,
    ]

\addplot table [x=r, y=rej_ad_norm_tab, col sep=comma] {Figs/rej_ad_norm_tab_vs_r.csv};

\addlegendentry{\footnotesize Normal}

\addplot table [x=r, y=rej_ad_chisq_sqrt_tab, col sep=comma] {Figs/rej_ad_chisq_sqrt_tab_vs_r.csv};

\addlegendentry{\footnotesize Noncentral $\chi^2$}

\addplot[dotted, color=black]
coordinates {(0,.05) (\rmax,.05)};

\end{axis}
\end{tikzpicture}
\begin{tikzpicture}[scale = 1, every mark/.append style={mark size=1pt}]
	\begin{axis}[
	xmode=log,
    width=7cm,
    height=5.5cm,
    legend style={at={(1,.45)},
      anchor=north east, legend columns=1},
    xlabel={\footnotesize $n$ (sample size)},
    xticklabels={\scriptsize $10$, \scriptsize $10^2$, \scriptsize $10^3$, \scriptsize $10^4$},
    ytick={.05,.5,1},
   yticklabels={ \scriptsize $0.05$,\scriptsize $0.5$, \scriptsize $1$},
    ymin=0,
    xmin=10,
    xmax=\nmax,
    ymax=1.05,
    ]

\addplot table [x=n, y=rej_ad_norm_tab, col sep=comma] {Figs/rej_ad_norm_tab_vs_n.csv};

\addlegendentry{\footnotesize Normal}

\addplot table [x=n, y=rej_ad_chisq_sqrt_tab, col sep=comma] {Figs/rej_ad_chisq_sqrt_tab_vs_n.csv};

\addlegendentry{\footnotesize Noncentral $\chi^2$}

\addplot[dotted, color=black] coordinates {(10, 0.05) (\nmax, 0.05)};

\end{axis}
\end{tikzpicture}
    \end{center}
    \caption{ Comparing the fit of the empirical distribution of moderately perturbed P-values to the normal and noncentral chisquared distributions. Both panels show the rejection rate of the Anderson-Darling (AD) Goodness-of-fit test at a significance level of $0.05$ (a smaller rejection rate indicates a better fit).
    Left: rejection rate versus perturbation intensity parameter $r$; $n=1,000$ is fixed. Right: rejection rate versus sample size $n$; $r=2$ is fixed. 
}    
    \label{fig:sim2}
\end{figure}



\section{Additional Discussion} \label{sec:discussion}
\setcounter{equation}{0}

\subsection{Heteroscedasticity in RMD models}
\label{sec:heteroscedastic}
The phase transition described by $\rho(\beta,\sigma)$ can be seen as the result of two phenomena: (i) location shift controlled by $\rho$, and (ii) heteroscedasticity controlled by $\sigma^2$. Roughly speaking, increasing the effect of either (i) or (ii) eases detection and reduces the phase transition curve, as seen in Figure~\ref{fig:phase_diagram}. We refer to \citep{tony2011optimal} for a more comprehensive discussion on the effect of (ii) on the phase transition curves. With obvious changes, this discussion is also relevant to the curves $\rho_{\twosample}(\beta;\sigma)$ of \eqref{eq:rho_twosample} and $\rho^*_{\mathsf{Bin}}(\beta,s)$ of \eqref{eq:rho_bin} with $s>0$. 

Comparing the effect of heteroscedasticity in one- versus two-sample setting, we see that 
\[
\rho_{\twosample}(\beta,1) = 2\rho(\beta,1),
\]
an observation first made in \citep{DonohoKipnis2020}. Interestingly, as shown in Figure~\ref{fig:phase_diagram_twosample}, this relation between $\rho_{\twosample}(\beta,s)$ and $\rho(\beta,s)$ does not hold when $s\neq 1$. Specifically, detection in the two-sample homeostatic setting ($s=1$) asymptotically requires twice the effect size. On the other hand, compared to the one-sample case, more than twice the effect size is needed for overdispersed mixtures ($s>1$) and less for underdispersed ones ($s<1$).


\subsection{Other Generalizations of Rare and Weak Models
\label{sec:discussion:conditions}
}
\cite{cai2014optimal} considered general rare and weak signal detection models characterized by the asymptotic behavior of the likelihood ratio between the mixture components on the moderate deviation scale which is similar to \eqref{eq:LR_APLC}. For rare and weak departures from the exponential distribution, the information-theoretic lower bound of Theorem~\ref{thm:powerlessness} generalizes \cite[Thm. 3]{cai2014optimal} by allowing for non-identically distributed coordinates and by providing conditions that involve integrated versions of the likelihood ratio. 

Another generalization of rare and weak signal detection models is provided by
\cite{arias2017distribution}, which considered a symmetric null distribution and proposed non-parametric HC- and Bonferroni-type tests that possess interesting optimality properties. Our RMD formulation applies to the setting of \cite{arias2017distribution} when the non-symmetric behavior of an individual test statistic under the alternative hypothesis is on the moderate deviation scale.

For the HC test, \cite{donoho2021impossibility} considered rare mixtures of P-values with non-null component $Q_i^{(n)}$ obeying
\begin{align*}
    \max_i \Pr_{P_i \sim Q_i^{(n)}} \left[P_i > 2q\log(n) \right] & = \max_i \exsub{X \sim \lognull} { L_i(X) \one_{\{X > 2q\log(n) \}} } \\
    & = n^{-\alpha'(q;\rho)+o(1)},
\end{align*}
for some continuous, non-negative bivariate function $\alpha'(q;\rho)$ that is increasing in $q$ and decreasing in $\rho$. They showed that HC of such P-values is powerless in the region 
\begin{align*}
    \Xi_\HC \equiv \left\{ (\rho,\beta)\,:\, \max_{q\in[0,1]} \left( \frac{1+q}{2} - \alpha'(q;\rho)\right)< \beta \right\}.
\end{align*}
The region $\Xi_\HC$ coincide with $\{ (\rho,\beta)\,:\, \rho< \rho(\beta,\sigma) \}$ in the RMD setting for which we have $\alpha'(q;\rho) = \max\{\alpha(q;\rho,\sigma),0\}$. 

Examples of rare and weak multiple testing settings with non-moderate departures include the sparse positive dependence model of \cite{arias2020dependence}, rare mixtures involving distributions of polynomial tails with a location shift of order $\mu_n(r)$ as the alternative studied in \cite{arias2019detection}, small Poisson means studied in \cite{arias2015sparse}, two-sample Poisson means in the low-counts case of \cite{DonohoKipnis2020}, and others \citep{jin2016rare}.

\ifsupp
\section*{Supplementary Materials}
\section{Technical Lemmas \label{sec:proofs_of_lemmas}}
\setcounter{equation}{0}
\begin{lemma}\label{lem:E_prob_to_Exp}
Let $\{P_i^{(n)}\}_{i=1}^n$ be a sequence of probability distributions, each $P_i^{(n)}$ has density whose support is contained in $[0,\infty)$. Fix $q>0$. If,
\begin{align}
    \lim_{n \to \infty} \max_{i=1,\ldots,n}  \frac{\left|\log\left(\frac{d P_i^{(n)}}{d\Exp(2)} (2q\log(n)) \right) \right| }{\log(n)}  = 0,
    \label{eq:exp_asym_lemma}
\end{align}
then
\begin{align}
    \lim_{n\to \infty} \max_{i=1,\ldots,n} \left| \frac{-\log  \Pr \left[P_i^{(n)} \geq 2q\log(n) \right]}{\log(n)}  - q \right|.
    \label{eq:E_prob_to_Exp}
\end{align}
\end{lemma}
\begin{proof} 
The assumption on the density of $P_i^{(n)}$ ensures that it is absolutely continuous with respect to $\lognull$. Fix $q>0$. We can write \eqref{eq:exp_asym_lemma} as 
\[
\frac{d P_i^{(n)}}{d\lognull} (2q\log(n)) = n^{o(1)},
\]
where $o(1) \to 0$ uniformly in $i$ for every fixed $q$. From
\[
\frac{d \lognull}{dx}(x) = \frac{e^{-x/2}}{2},
\]
we get
\begin{align*}
    \Prp{P_i^{(n)} \geq 2 q \log(n)} & = \int_{2 q \log(n)}^{\infty} \frac{d P_i^{(n)}}{dx} dx  \\
    & = \int_{2 q \log(n)}^{\infty} \frac{d P_i^{(n)}}{d \lognull} \frac{d\lognull}{dx} dx \\
    & = \int_{2 q \log(n)}^{\infty} n^{o(1)} e^{ -x/2} /2 dx \\
    & = n^{o(1)} e^{-q \log(n)}/2 = n^{-q + o(1)}
\end{align*}
This implies \eqref{eq:E_prob_to_Exp}.

\end{proof}

We will use the following lemma from \citep{cai2014optimal}, providing a particular version of Laplace's principle. 
\begin{lemma}{\cite[Lemma 3]{cai2014optimal}}
\label{lem:M}
Let $\left(X,\mathcal{F},\nu \right)$ be a measure space. Let $F\,:\,X \times \reals_+ \to \reals_+$ be measurable. Assume that 
\begin{align}
    \lim_{M\to\infty} \frac{\log F(x,M)}{M} = f(x)
    \label{eq:integrability_condition}
\end{align}
holds uniformly in $x\in X$ for some measureable $f\,:\,X\to \reals$. If
\[
\int_X \exp(M_0 f(x))d\nu(x)<\infty
\]
for some $M_0>0$, then
\begin{align}
    \lim_{M\to \infty} \frac{1}{M} \log \int_X F(x,M) d\nu(x) = \mathrm{ess}\sup_{x\in X} f(x). \nonumber
\end{align}
\end{lemma}

\begin{lemma}
\label{lem:LR_to_E}
Suppose that $\{Q_i^{(n)}\}_{i=1}^n$ satisfy \eqref{eq:APLCC_cond}, $\{E_i^{(n)}\}_{i=1}^n$ satisfy \eqref{eq:exp_asym}, $Q_i^{(n)}$ is absolutely continuous with respect to $E_i^{(n)}$, and $E_i^{(n)}$ is absolutely continuous with respect to the Lebesgue measure on $[0,\infty)$. Set 
\begin{align}
    L_i^{(n)}(x) := \frac{d Q_i^{(n)}}{d E_i^{(n)}}(x).  \nonumber
\end{align}
and 
\begin{align*}
    \alpha^*(q; r, \sigma) := \max_{y\in[r,q]} \left\{- 2\alpha(y;r,\sigma)+y \right\}.
\end{align*}
Assume that
\begin{align}
    \lim_{n\to \infty} \max_{i=1\ldots,n} \frac{ \left|\log\left( \frac{dQ_i^{(n)}}{d \chi^2(r,\sigma)} (2q\log(n))\right)\right|}{\log(n)} = 0,\quad \forall q\in (r,r+a),
    \label{eq:lem:LR_equivalence}
\end{align}
for some $a>0$ and $r>0$. Then, for any fixed $q\in (r,r+a)$,
\begin{align}
    \lim_{n\to \infty} \max_{i=1\ldots,n} \left|\frac{-\log\left(\exsub{X\sim Q_i^{(n)}}{L_i^{(n)}(X) \One{\{X \leq 2q\log(n) \}}} \right)}{\log(n)} -  \alpha^*(q; r ,\sigma) \right| = 0.
\label{eq:general_cond_B_proof}
\end{align}
\end{lemma}

\begin{proof} 
Fix $q \in (r, r+a)$. We have
\begin{align}
    & \exsub{X\sim Q_i^{(n)}}{ L_i^{(n)}(X) \One{ \{X \leq 2q \log(n)\} } }  = 
    \exsub{X\sim E_i^{(n)}}{ (L_i^{(n)})^2(X) \One{\{X \leq 2q \log(n) \} } } \nonumber \\
    & \qquad  = \int_0^{2q \log(n)} \left(\frac{d Q_i^{(n)}}{d E_i^{(n)}}(x)\right)^2  E_i^{(n)}(dx) \nonumber  \\
    & = 2\log(n) \int_0^q  \left(\frac{d Q_i^{(n)}}{d \E_i^{(n)}} (2 \log(n) y)\right)^2 E_i^{(n)}(2\log(n)dy)
    \label{eq:proof_L2_equiv1}
    \\
    & = \log(n)\int_0^q  \left(\frac{d Q_i^{(n)}}{d \E_i^{(n)}} (2 \log(n) y)\right)^2 e^{-y\log(n)(1+o(1))} dy
    \label{eq:proof_L2_equiv2}
    \\
    & = \log(n) \int_0^q n^{-2\alpha(y;r,\sigma)+2y+o(1)} \cdot n^{o(1)} \cdot
    n^{-y} dy = \int_0^q n^{-2\alpha(y;r,\sigma)+y+o(1)} dy,
    \label{eq:proof_L2_equiv3}
\end{align}
where \eqref{eq:proof_L2_equiv1} follows from the change of variables $x = 2y \log(n)$,  \eqref{eq:proof_L2_equiv2} follows from Lemma~\ref{lem:E_prob_to_Exp}, and \eqref{eq:proof_L2_equiv3} follows from \eqref{eq:lem:LR_equivalence}. Furthermore, $o(1)$ in \eqref{eq:proof_L2_equiv2}-\eqref{eq:proof_L2_equiv3} represents a sequence tending to zero uniformly in $i$ and $y\in[0,q]$. We now apply Lemma~\ref{lem:M} to \eqref{eq:proof_L2_equiv3} with $X=[r,q]$, $M=\log(n)$, $F(x,M)=n^{-2\alpha(x;r,\sigma)+x+o(1)}$, $f(x) = -2\alpha(x;r,\sigma)+x$, and $\nu$ the Lebesgue measure. We obtain:
\begin{align*}
    \lim_{n \to \infty} \sup_{i=1,\ldots,n} \frac{ \log\left( \exsub{X\sim Q_i^{(n)}}{ L_i^{(n)}(X) \One{\{X > 2q \log(n)\} } }
     \right)}{\log(n)} & = \max_{y\in[r,q]} \left\{- 2\alpha(y;r,\sigma)+y \right\} \\
     & = -\alpha^*(q; r ,\sigma)
\end{align*}
Equation \eqref{eq:general_cond_B_proof} follows. 
\end{proof} 

The following lemma summarizes the truncated likelihood ratio method from \citep{ingster2012nonparametric}.  
\begin{lemma}\label{lem:Ingster}
Consider testing  
\begin{align}
    H_0^{(n)}\,&:\,(X_1,\ldots,X_n)\sim P_0^{(n)} \nonumber
\end{align}
versus
\begin{align}
    H_1^{(n)}\,&:\,(X_1,\ldots,X_n)\sim P_1^{(n)} \nonumber
\end{align}
for $P_1^{(n)}$ that is absolutely continuous with respect to $P_0^{(n)}$. Denote by $L_n = \frac{dP_1^{(n)}}{dP_0^{(n)}}$ the likelihood ratio between $P_1^{(n)}$ and $P_0^{(n)}$. Suppose that there exists a sequence of sets $A^{(n)} \subset \reals^n$ such that 
\begin{align}
\label{eq:first_moment}
    1 - \exsub{H_0^{(n)}}{L_n(X_1,\ldots,X_n) \One{(X_1,\ldots,X_n) \in A^{(n)}} } \leq  o(1) 
\end{align}
while 
\begin{align}
\label{eq:second_moment}
    \exsub{H_0^{(n)}}{L_n^2(X_1,\ldots,X_n) \One{(X_1,\ldots,X_n) \in A^{(n)} } }  \leq 1 + o(1).
\end{align}
For any sequence of tests $\psi^{(n)}: \reals^{n} \to \{0,1\}$,  \begin{align*}
    \liminf_{n\to \infty} \left\{ \exsub{H_0^{(n)}}{\psi^{(n)}(X_1,\ldots,X_n)} + \exsub{H_1^{(n)}}{1-\psi^{(n)}(X_1,\ldots,X_n)} \right\} \geq 1.
\end{align*}
\end{lemma}
\begin{proof}
Set
\begin{align*}
    \tilde{L}_n := \tilde{L}_n(X_1,\ldots,X_n) := L_n (X_1,\ldots,X_n) \One{A^{(n)}}(X_1,\ldots,X_n).
\end{align*}    
Conditions \eqref{eq:first_moment} and \eqref{eq:second_moment} imply
\begin{align*}
    \exsub{H_0^{(n)}}{\tilde{L}_n} = \left(\exsub{H_0^{(n)}}{\tilde{L}_n^2}-1\right) - 2\left( \exsub{H_0^{(n)}}{\tilde{L}_n} - 1 \right) \leq o(1),
\end{align*}
hence $\tilde{L}_n(X) \to 1$ in probability under $H_0^{(n)}$. Next, for some $\psi^{(n)} : \reals^n \to \{0,1\}$ and $\epsilon>0$,
\begin{align*}
     & \exsub{H_0^{(n)}}{\psi^{(n)}} + \exsub{H_1^{(n)}}{1-\psi^{(n)}} \\
     & = \exsub{H_0^{(n)}}{\psi^{(n)} + L_n(1-\psi^{(n)})} \\
    & \geq \exsub{H_0^{(n)}}{\psi^{(n)} + \tilde{L}_n(1-\psi^{(n)})} 
    \\
    & \geq \exsub{H_0^{(n)}}{\psi^{(n)} + \tilde{L}_n(1-\psi^{(n)}) \mid |\tilde{L}_n-1| < \epsilon }\Prp{|\tilde{L}_n-1| < \epsilon} \\
    & \geq \exsub{H_0^{(n)}}{\psi^{(n)} + (1-\epsilon)(1-\psi^{(n)})  }\Prp{|\tilde{L}_n-1| < \epsilon} \\
    & \geq (1 -\epsilon)\Prp{|\tilde{L}_n-1| < \epsilon} = (1-\epsilon)(1+o(1)).
\end{align*}
As $\epsilon>0$ is arbitrary, we have that 
\begin{align*}
    \liminf_{\psi^{(n)}} \left\{ \exsub{H_0^{(n)}}{\psi^{(n)}} + \exsub{H_1^{(n)}}{1-\psi^{(n)}} \right\} \geq 1.
\end{align*} 
\end{proof}

\begin{lemma} \label{lem:under_null0}
Let $q\in(0,1]$ be fixed. Let $U_1,\ldots,U_n$ be $n$ independent RVs satisfying $\Prp{U_i \leq n^{-q}}=n^{-q}(1+a_{n,i}(q) )$, and denote by
\[
F_n(t) := \frac{1}{n}\sum_{i=1}^n \One{\{U_i \leq t\}}
\]
their empirical CDF. If $\bar{a}_n(q) := n^{-1} \sum_{i=1}^n a_{n,i}(q) \leq n^{\frac{q-1}{2}}$, then
\begin{align}
\label{eq:under_null0}
\Prp{ \sqrt{n}\frac{F_{n}(n^{-q})-n^{-q}}{\sqrt{n^{-q}(1-n^{-q})}} \geq \log(n) } \to 0
\end{align}
\end{lemma}

\begin{proof}
Denote $t_n = n^{-q}$. We have that $\ex{F_n(t_n)}=t_n(1+\bar{a}_n(q))$. If $\bar{a}_n(q) \leq 0$ for all $n \geq n_0$ for some $n_0$, then \eqref{eq:under_null0} holds. Otherwise, we assume without loss of generality that $r_n := \ex{F_n(t_n) - t_n} = n^{-q}\bar{a}_n(q) >0$ for all $n$, since the complementary case can be handled by considering only a sub-sequence with that property. Write
\begin{align*}
\Prp{ \sqrt{n}\frac{F_{n}(t_n)-t_n}{\sqrt{t_n(1-t_n)}} \geq \log(n)} & = \Prp{F_{n}(t_n)-t_n \geq (1+\delta) r_n},
\end{align*}
where 
\begin{align*}
    \delta := -1 + \frac{\sqrt{t_n(1-t_n)} \log(n) }{r_n\sqrt{n}} & \geq -1 + \log(n) n^{\frac{q-1}{2}} (\bar{a}_n(q))^{-1} \sqrt{1-1/n} \\
    & \geq -1 + \log(n) (1+o(1)).
\end{align*}
We have that $\delta \to \infty$. For $X$ the sum of $n$ independent Bernoulli RVs with $\mu = \ex{X}$, the Chernoff inequality \cite[Ch 4.]{mitzenmacher2017probability} says
\[
\Pr \left( X \geq (1+\delta)\mu \right) \leq \left( \frac{e^{-\delta}}{(1+\delta)^{1+\delta}} \right)^\mu \leq e^{-\mu \frac{\delta^2}{2+\delta}},\quad \mu = r_n,\quad  \delta\in (0,\infty). 
\]
We use this inequality with $X = n F_{n}(t) = \sum_{i=1}^n \One{\{U_i \leq t\}}$. For $n$ large enough such that $\delta>2$, we obtain
\begin{align*}
    & 
    -\log \Prp{ \sqrt{n}\frac{F_{n}(t_n)-t_n}{\sqrt{t_n(1-t_n)}} \geq \log(n) } \geq \frac{\delta^2 n}{2+\delta} r_n \geq \frac{\delta \cdot n}{2} r_n \\
    & \qquad \geq \frac{n}{2} \left(n^{-1/2}\log(n)-  r_n \right) = \frac{n^{0.5}}{2} \left(\log(n)- n^{-q+1/2} \bar{a}_n(q) \right) \to \infty.
\end{align*}
\end{proof}

\begin{lemma}{\cite[Lem. 5.7 ]{DonohoKipnis2020}}
\label{lem:Donoho_kipnis}
Let $\alpha(\cdot)$ and $\gamma(\cdot)$ be two real-valued functions $\alpha, \gamma : [0,\infty) \to [0,\infty)$. Let $q\in(0,1)$ and $\beta>0$ be fixed. Let $F_n(t)$ be the normalized sum of $n$ independent RVs. Suppose that
\begin{align*}
\ex{F_n(n^{-q})} = n^{-q+o(1)}(1-n^{-\beta}) +
n^{-\beta} n^{-\alpha(q)+o(1)}.
\end{align*}
Let $\{a_n\}_{n=1}^\infty$ be a positive sequence obeying $a_n n^{-\eta} \to 0$ for any $\eta>0$. If
\begin{align*}
    \delta(q)+\beta < \gamma(q),
\end{align*}
then
\[
\Prp{ n^{\gamma(q)} (F_{n}(n^{-q}) -n^{-q}) \leq a_n} \to 0, \qquad n\to \infty.
\]
\end{lemma}

\begin{lemma}\label{lem:F1}
Assume that $r < \rho_{\Bonf}(\beta,\sigma)$. Consider $p_1,\ldots,p_n$ as in \eqref{eq:hyp_log_n_appx}. For an interval $I\subset [0,1]$, define
\begin{align}
    T_I := \min_{i\,:\,p_{(i)} \in I} \frac{p_{(i)}}{i/n}.  \nonumber
\end{align}
For any $0<a<1$ and $q<1$, 
\begin{align}
    \Pr_{H_1^{(n)}} \left[ T_{(n^{-q},1]} \leq a \right] \to 0. \nonumber
\end{align}
\end{lemma}
\begin{proof}
Let $F_n(t) := n^{-1}\sum_{i=1}^n \One{p_i \leq t}$ be the empirical CDF of $p_1,\ldots,p_n$. Note that $i/n = F_n(p_{(i)})$, hence 
\begin{align}
 \frac{p_{(i)}}{i/n} \leq a \Longleftrightarrow F_n ( p_{(i)}) \geq p_{(i)}/a. \nonumber
\end{align}
Consequently, 
\begin{align}
    \Pr_{H_1^{(n)}} \left[ T_{(n^{-q},1]} \leq a \right] & \leq \sup_{t > n^{-q}} \Pr_{H_1^{(n)}} \left[ F_n(t) 
     \geq t/a \right] \nonumber \\
     & = \sup_{t > n^{-q}} \Pr_{H_1^{(n)}} \left[ nF_n(t) 
     \geq nt/a \right] \nonumber \\
     & = \sup_{t > n^{-q}} \Pr_{H_1^{(n)}} \left[ nF_n(t) 
     \geq \mathbb E_{{H_1^{(n)}}} \left[ {nF_n(t)}\right](1+\kappa) \right], \label{eq:lem:F1:proof:eta}
\end{align}
where 
\begin{align}
    \kappa := \kappa(n,a,t) := \frac{t}{a\mathbb E\left[F_n(t)\right]} - 1.  \nonumber
\end{align}
Let $U_i\sim \Unif(0,1)$ and $-2\log(X_i) \sim Q_i^{(n)}$, for $i=1,\ldots,n$. Using the parameterization $t_n = n^{-q'}$, $q' \leq q<1$, 
\begin{align}
    \mathbb E_{H_1^{(n)}}\left[ F_n(t_n)\right] & = \frac{1}{n}\sum_{i=1}^n \Pr_{H_1^{(n)}}\left[ p_i \leq n^{-q'} \right] \\
    & = (1-\epsilon_n) \Prp{ U_i \leq n^{-q'} }  + \epsilon_n \Prp{ X_i \leq n^{-q'} } \\
    & = (1-\epsilon_n)n^{-q'} + n^{-\beta} \cdot n^{-\alpha(q';r,\sigma) + o(1)} \\
    & = 1-\epsilon_n +  n^{-\alpha(q';r,\sigma)+o(1)-\beta}, \label{eq:last_disp1}
\end{align}
where the last transition follows from \eqref{eq:APLCC_cond}. The condition $r < \rho_{\Bonf}(\beta,\sigma)$ implies
\[
\sup_{q' \leq q<1} \left(q' - \beta - \alpha(q';r,\sigma) \right) \leq q - \beta - \alpha(1;r,\sigma) < 0.
\]
Therefore, from \eqref{eq:last_disp1} we get $\exsub{H_1^{(n)}}{F_n(t_n)}/n^{-q'}\to 1$. Because $F_n(t)$ has at most $n$ jumps, a simple grid argument implies 
\begin{align}
\inf_{t > n^{-q}} \frac{t}{\exsub{H_1^{(n)}}{F_n(t)}} = 1 + o(1). \nonumber
\end{align}
As $a < 1$, there exists $n_0(q)$ such that 
\[
\inf_{t>n^{-q}} \kappa(n,a,t) \geq (1/a - 1)/2 > 0,\qquad n \geq n_0(q).
\]
Set $b:= (1/a - 1)/2>0$. Applying the Chernoff inequality (c.f. \cite[Ch. 4]{mitzenmacher2017probability}) to the sum of independent RVs $n F_n(t)$ in  \eqref{eq:lem:F1:proof:eta}, we obtain
\begin{align*}
    \Pr_{H_1^{(n)}} \left[ T_{(n^{-q},1]} \leq a \right] &  \leq \sup_{t > n^{-q}} \exp\left\{-\frac{n}{a} \frac{\kappa^2}{1+\kappa} \mathbb E_{H_1^{(n)}}\left[F_n(t)\right]  \right\} \\
    & \leq \exp \left\{- \frac{n}{2a}  \inf_{t > n^{-q}} \frac{\kappa^2}{1+\kappa}   
    E_{H_1^{(n)}}\left[F_n(t)\right]  \right\} \\ 
    & = \exp \left\{- \frac{1}{2a} \frac{b^2}{1+b} n^{1-\alpha(q;r,\sigma) +o(1)-\beta} \right\} \to 0,
\end{align*}
where the last transition follows because $r < \rho_\Bonf(\beta,\sigma)$ implies $\beta+\alpha(q;r,\sigma) \leq \beta+\alpha(1;r,\sigma) < 1$. 
\end{proof}

\begin{lemma}
    \label{lem:Poisson}
    Let $\{a_n\}$,$\{b_n\}$, and $\{\lambda_n\}$ be non-negative sequences such that, as $n \to \infty$, 
    $a_n \to \infty$, $\lambda_n \to \infty$, $a_n/\lambda_n \to 0$, and $a_n/b_n \to c$ for some $c>1$. For $\lambda' = \lambda_n + \sqrt{\lambda_n b_n}$ and $\Upsilon_{\lambda'} \sim \Pois(\lambda')$, 
    \begin{align}
        \lim_{n \to \infty} \frac{\Prp{-2\log \Pp(\Upsilon_{\lambda'};\lambda_n) \geq a_n}} {(\sqrt{a_n} - \sqrt{b_n})^2} = -\frac{1}{2}. \nonumber
    \end{align}
\end{lemma}

\begin{proof}
We first develop a moderate deviation estimate for the Poisson survival function. From
\begin{align*}
    \Pp(x;\lambda) & = e^{-\lambda} \sum_{k=x}^\infty \frac{\lambda^k}{k!} \\
    & = e^{-\lambda} \frac{\lambda^x}{x!} \left( 1 + \frac{\lambda^{x+1}}{(x+1)} + \frac{\lambda^{x+2}}{(x+1)(x+2) + \ldots} \right),
\end{align*}
we get
    \begin{align*}
        -\log \Pp(x;\lambda) = \lambda - x \log(\lambda) + \log \Gamma(x) + R(x;\lambda),
    \end{align*}
where $\Gamma(x)$ is the Gamma function and
\[
R(\lambda;x) := \log\left(1 + \frac{\lambda^{x+1}}{(x+1)} + \frac{\lambda^{x+2}}{(x+1)(x+2) + \ldots} \right) \leq -\log \left( 1- \frac{\lambda}{1+x} \right) = O(\lambda/x). 
\]
Furthermore, 
\[
\log \Gamma(x) = (x-\frac{1}{2})\log(x) - x + O(1/x) 
\]
Therefore, for $x > \lambda > 1$, we have that $t \leq -\log \Pp(x;\lambda)$ iff
\[
t \leq -(x-\lambda) + x \log(x/\lambda) + O(\lambda/x) = -(x-\lambda) x + x \left( \frac{x-\lambda}{\lambda} \right)  + O(\lambda/x),
\]
if and only if $
0 = x^2 - 2x \lambda + \lambda^2 - t \lambda + o(\lambda/x)$. Solving the last equation for $x>0$, we obtain $t \leq -\log \Pp(x;\lambda)$ if
\begin{align}
x & \geq \lambda + \sqrt{ t \lambda} + O(\sqrt{\lambda/x}).
\label{eq:lem:Poisson:2}
\end{align}

Next, consider the event $A = \{\Upsilon_{\lambda'} \geq \lambda_n\}$. We have
\begin{align}
\Prp{-\log \Pp(\Upsilon_{\lambda'};\lambda_n) \geq a_n | A} & \overset{a}{=} \Prp{ \Upsilon_{\lambda'} \geq \lambda_n + \sqrt{\lambda_n a_n} + O(\sqrt{\lambda_n/\Upsilon_{\lambda'})} |A} \nonumber \\
= & \Prp{\Upsilon_{\lambda'} \geq \lambda_n + \sqrt{a_n \lambda_n} + O(1) |A} \nonumber \\
= & \Prp{\Upsilon_{\lambda'} \geq (\lambda_n + \sqrt{a_n \lambda_n})(1+o(1)) |A} \nonumber \\
\overset{b}{=} & \Prp{\Upsilon_{\lambda'} \geq (\lambda' + \sqrt{\lambda'}\left(\sqrt{a_n} - \sqrt{b_n}\right)(1+o(1)) |A} \nonumber \\
& = \Prp{\Upsilon_{\lambda'} \geq \lambda' + \sqrt{\lambda'}\sqrt{c_n} |A}
\label{eq:lem:Poisson:1},
\end{align}
where $\{c_n\}$ is a sequence satisfying
\begin{align}
\frac{\sqrt{c_n}}{{\sqrt{a_n}} - \sqrt{b_n}} \to 1\quad \text{as}\quad n \to \infty. 
\label{eq:lem:Poisson:3}
\end{align}
In the arguments leading to \eqref{eq:lem:Poisson:1}, $(a)$ is due to \eqref{eq:lem:Poisson:2} and $(b)$ is due to 
\begin{align*}
\lambda_n + \sqrt{\lambda_n a_n}& = \lambda' + \sqrt{\lambda'}\left(\sqrt{a_n} - \sqrt{b_n} \right)\sqrt{ \lambda_n/\lambda'} \nonumber \\
& = \lambda' + \sqrt{\lambda'}\left(\sqrt{a_n} - \sqrt{b_n} \right)(1 + o(1)),
\end{align*}
the last transition because $b_n/\lambda_n \to 0$. 

Since $\sqrt{\lambda_n b_n/\lambda'} \to \infty$, the normal approximation $\Upsilon_{\lambda'} \sim \Ncal(\lambda',\lambda')$ implies
\[
\Prp{A} \sim \Prp{ \sqrt{\lambda'}Z + \lambda' \geq \lambda_n} = \Prp{ Z \geq -\sqrt{\lambda_n b_n/\lambda'} } \to 1.
\]
We obtain 
\begin{align*}
\lim_{n \to \infty} \frac{\log\Prp{-\log \Pp(\Upsilon_{\lambda'};\lambda_n) \geq a_n}}{(\sqrt{a_n} - \sqrt{b_n})^2 } & = 
\lim_{\lambda_n \to \infty} \frac{\log\Prp{-\log \Pp(\Upsilon_{\lambda'};\lambda_n) \geq a_n|A}}{(\sqrt{a_n} - \sqrt{b_n})^2 } \\
& = \lim_{\lambda_n \to \infty} \frac{\log\Prp{\Upsilon_{\lambda'} \geq \lambda_n + \sqrt{a_n \lambda_n} |A}}{(\sqrt{a_n} - \sqrt{b_n})^2 } \\
& = \lim_{n \to \infty} \frac{\log\Prp{\Upsilon_{\lambda'} \geq \lambda_n + \sqrt{a_n \lambda_n}}}{(\sqrt{a_n} - \sqrt{b_n})^2 } \\
& \overset{c}{=} \lim_{n \to \infty} \frac{\log\Prp{\Upsilon_{\lambda'} \geq \lambda'
+ \sqrt{\lambda' c_n }}}{(\sqrt{a_n} - \sqrt{b_n})^2 } \\
& \overset{d}{=} \lim_{n \to \infty} \frac{\log\Prp{\Upsilon_{\lambda'} \geq \lambda'
+ \sqrt{\lambda' c_n }}}{c_n} \\
& \overset{e}{=} -\frac{1}{2}
\end{align*}
where $(c)$ is due to \eqref{eq:lem:Poisson:1}, $(d)$ follows from \eqref{eq:lem:Poisson:3}, and in $(e)$ we used the following moderate deviation estimate for a Poisson RV from \citep{arias2015sparse}.
\begin{lemma}{\cite[Lemma 2]{arias2015sparse}}
\label{lem:arias2015}
Let $c : (0,\infty) \to (0, \infty)$ be such that $c(\lambda)\to \infty$ and $c(\lambda)/\lambda \to 0$ as $\lambda \to \infty$. Then
\[
\lim_{\lambda \to \infty} \frac{\log \left( \Upsilon_{\lambda} \geq \lambda + \sqrt{\lambda c(\lambda)} \right)}{ c(\lambda)} = \frac{-1}{2}
\]
\end{lemma}
This completes the proof of Lemma~\ref{lem:Poisson}.
\end{proof}

\section{Proofs of Results in Section~\ref{sec:APLC}} 
\setcounter{equation}{0}


\subsection{Proof of Theorem~\ref{thm:powerlessness}
\label{sec:proof:powerlessness}
}

Note that for $a<r$, 
\[
\frac{a}{2} - \alpha(a; r, \sigma) \leq \frac{r}{2} - \alpha(r; r, \sigma) = \frac{r}{2},
\]
hence the maximum in \eqref{eq:rho_as_minmax} is not attained in the interval $q \in [0,r)$. Hence, by \eqref{eq:rho_as_minmax}, for $r<\rho^*(\beta,\sigma)$ there exists $\delta>0$ such that 
\begin{align}
\label{eq:HC_cond0}
      \max_{q\in [r,1]} \left( \frac{1+q}{2} - \alpha^*(q;r,\sigma)\right) + \delta - \beta < 0.
\end{align}
In particular, 
\begin{align}
\label{eq:HC_cond1}
      1 - \beta - \alpha^*(1;r,\sigma) \leq -\delta,
\end{align}
and, by continuity of $q\to \alpha^*(q;r,\sigma)$, there exists $\eta \in (0,\gamma)$ such that 
\begin{align}
\label{eq:HC_cond2}
    \quad 1 - 2\beta - \alpha^*(1+\eta;r,\sigma) < - \delta. 
\end{align}

Fix $\delta>0$ satisfying \eqref{eq:HC_cond0}, and let $\eta>0$ satisfy \eqref{eq:HC_cond2}. We now use Lemma~\ref{lem:Ingster} with 
\begin{align*}
P_0^{(n)} = \prod_{i=1}^n E_i^{(n)}, \qquad P_1^{(n)} = \prod_{i=1}^n\left[ (1-\epsilon) E_i^{(n)} + \epsilon Q_i^{(n)} \right],
\end{align*}
where $\{E_i^{(n)} \}$ satisfy \eqref{eq:exp_asym}, and 
\begin{align*}
    A^{(n)} = \prod_{i=1}^n \{X_i \leq 2(1+\eta)\log(n)\}.
\end{align*}
We have 
\begin{align}
    \tilde{L}_n = \prod_{i=1}^n \bar{L}_i^{(n)}(X_i)\One{\{X_i \leq 2(1+\eta)\log(n) \}},
    \label{eq:L_n_tilde}
\end{align}
where 
\begin{align}
\bar{L}_i^{(n)}(x):= (1-\epsilon_n)+\epsilon_n L_i^{(n)} = 1+\epsilon_n (L_i^{(n)}(x)-1),
\label{eq:ell_n}
\end{align}
and
\begin{align*}
    L_i^{(n)}(x) := \frac{dQ_i^{(n)}}{d E_i^{(n)}}(x) . 
\end{align*}

Henceforth, all expectations are with respect to $X_i\sim E_i^{(n)}$ unless otherwise specified. For the first moment, since $\ex{L_i^{(n)}(X_i)}=1$, we have
\begin{align}
\ex{\tilde{L}_n} & = \prod_{i=1}^n \left( 1 - a_{n,i} \right),
\label{eq:proof:method2:L1_prod}
\end{align}
where, 
\[
a_{n,i} :=  \ex{ \bar{L}_i^{(n)}(X_i) \One{\{ X_i > 2(1+\eta) \log(n)\}}}.
\]
Consider
\begin{align}
    a_{n,i} & 
    = \Pr_{X_i \sim E_i^{(n)}}\left[X_i \geq 2(1+\eta)\log(n) \right] 
     + \epsilon_n \ex{\left( L_i^{(n)}(X_i)  - 1\right) \One{\{ X_i > 2(1+\eta) \log(n)\}} } \nonumber \\
     & \leq \Pr_{X_i \sim E_i^{(n)}}\left[X_i \geq 2(1+\eta)\log(n) \right] 
     + \epsilon_n \ex{ L_i^{(n)}(X_i) \One{\{ X_i > 2(1+\eta) \log(n)\}}} \nonumber \\
     & 
    = n^{-(1+\eta)+o(1)} 
     + n^{-\beta} n^{-\alpha^*(1+\eta ;r,\sigma)+o(1)},
     \label{eq:proof:method2:L1}
\end{align}
where the last transition follows from \eqref{eq:exp_asym} and from  \eqref{eq:APLCC_cond}. It follows from \eqref{eq:HC_cond1} that $a_{n,i}=o(1/n)$, hence \eqref{eq:proof:method2:L1_prod} converges to $1$ and the first moment condition of Lemma~\ref{lem:Ingster} holds.

As for the second moment, we have
\begin{align}
\ex{\tilde{L}_n^2} & = \prod_{i=1}^n \ex{ \left( (1-\epsilon_n)^2 + 2\epsilon_n(1-\epsilon_n) L_i^{(n)}(X_i) + \epsilon_n^2  (L_i^{(n)}(X_i))^2 \right) \One{\{ X_i \leq 2(1+\eta)\log(n) \}} }\nonumber \\
& = \prod_{i=1}^n \left( (1-\epsilon_n)^2 + 2 \epsilon_n (1-\epsilon_n) \ex{L_i^{(n)}(X_i)} +  \epsilon_n^2 \ex{(L_i^{(n)}(X_i))^2  \One{\{ X_i \leq 2(1+\eta)\log(n) \} }} \right) \nonumber \\
& \leq \prod_{i=1}^n \left( 1-\epsilon_n^2 + \epsilon_n^2 \ex{(L_i^{(n)}(X_i))^2  \One{\{ X_i \leq 2(1+\eta)\log(n) \}} } \right) \leq \prod_{i=1}^n \left( 1 + b_{n,i} \right), \label{eq:thm:proof:L2}
\end{align}
where 
\[
b_{n,i} := \epsilon_n^2 +  \epsilon_n^2 \ex{(L_i^{(n)}(X_i))^2 \One{\{ X_i \leq 2(1+\eta)\log(n) \}} }.
\]
By \eqref{eq:general_cond_B},
\begin{align*}
    \ex{(L_i^{(n)}(X_i))^2  \One{\{ X_i \leq 2(1+\eta)\log(n) \}} } & = \exsub{X_i \sim Q_i^{(n)}}{ L_i^{(n)}(X_i) \One{\{ X_i \leq 2(1+\eta)\log(n) \}} }\\
    & = n^{ - \alpha^*(1+\eta\,;\, r, \sigma) + o(1)}.
\end{align*}
It follows from \eqref{eq:HC_cond2} that
\begin{align*}
n\cdot b_{n,i} & = n^{1-2\beta} + n^{1 - 2\beta  - \alpha^*(1+\eta\,;\,r, \sigma) \} + o(1)} < n^{1-2\beta} + n^{-\delta},
\end{align*}
which implies $b_{n,i} = o(1/n)$ since $\beta > 1/2$. We conclude that \eqref{eq:thm:proof:L2} converges to $1$ hence the second-moment condition of Lemma~\ref{lem:Ingster} holds and the proof of Theorem~\ref{thm:powerlessness} is completed. \qed

\subsection{Proof of Corollary~\ref{cor:powerlessness}
\label{sec:proof:cor:powerlessness}
}
Condition \eqref{eq:general_cond_A} follows from Lemma~\ref{lem:LR_to_E}. Condition \eqref{eq:general_cond_B} follows from \eqref{eq:APLCC_cond}.

\subsection{Proof of Theorem~\ref{thm:powerfulness}
\label{sec:proof:thm:HC}
}
Under \eqref{eq:exp_asym} and \eqref{eq:HC_cond}, Lemma~\ref{lem:under_null0} implies that 
\begin{align}
    \Pr_{H_0^{(n)}} \left[\HC_n^* > \log(n) \right] \to 0.  \nonumber
\end{align}
Therefore, it is enough to show that $
    \Pr_{H_1^{(n)}} \left[\HC_n^* \leq \log(n) \right] \to 0$. Set
\[
F_n(t) := \frac{1}{n} \sum_{i=1}^n \One{p_i \leq t}. 
\]
note that \eqref{eq:exp_asym} and \eqref{eq:APLCC_cond} imply
\begin{align*}
    \exsub{H_1^{(n)}}{F_n(n^{-q})} = n^{-q + o(1)}(1-n^{-\beta}) + n^{-\beta} n^{-\alpha^*(q;r,\sigma)+o(1)}.
\end{align*}
Additionally, 
\begin{align}
    \HC_n^* = \max_{1 \leq i \leq n \gamma_0} 
\sqrt{n} \frac{\frac{i}{n}-p_{(i)}}{\sqrt{p_{(i)} (1-p_{(i)})}}
    = \sup_{1/n \leq u \leq \gamma_0 n} \sqrt{n} \frac{F_n(u)-u}{\sqrt{u (1-u)}}, \nonumber
\end{align}
because the supremum occurs at jumping points of $F_n(u)$. Hence, provided $\gamma_0 < 1/2$, we surely have
\begin{align}
\HC_n^*
    \geq \sqrt{n} \frac{F_n(t)-t}{\sqrt{t (1-t)}} \geq \sqrt{\frac{n}{t}} \left( F_n(t) - t \right),\quad \forall t\in[1/n,1/2). \nonumber
\end{align}
Setting $t_n=n^{-q}$ for $q\leq 1$, we obtain:
\begin{align}
    \Pr_{H_1^{(n)}} \left[ \HC_n^* \leq \log(n) \right] & \leq \Pr_{H_1^{(n)}} \left( \sqrt{n} \frac{F_{n}(t_n)-t_n}{\sqrt{t_n (1-t_n)}} \leq \log(n) \right) \nonumber  \\
    & \leq \Pr_{H_1^{(n)}} \left[ n^{\frac{q+1}{2}}(F_n(t_n) -t_n) \leq \log(n) \right]. \label{eq:proof_main1}
\end{align}
Apply Lemma~\ref{lem:Donoho_kipnis} with $\delta(q) = \alpha^*(q;r,\sigma)$, $\gamma(q) = (q+1)/2$, and $a_n = \log(n)$ to conclude that \eqref{eq:proof_main1} goes to zero as $n\to \infty$. Theorem~\ref{thm:powerfulness} follows. \qed

\subsection{Proof of Theorem~\ref{thm:BJpowerfulness}}

The proof is similar to the proof of \cite[Thm. 4.4]{moscovich2016exact}. In particular, we use:
\begin{lemma}
\label{lem:Moscovich}
\cite[Cor. A1]{moscovich2016exact} 
Let $\{\alpha_n\}_{n=1}^\infty$ be a sequence converging to infinity. Let $\mu_n$, $\sigma^2_n$, and $f_n$ denote the mean, variance and density of  ~$\Beta(\alpha_n,n-\alpha_n+1)$, respectively. Let $g(n)$ be any positive function satisfying $g(n) = o(\min\{\alpha_n, n-\alpha_n\})$ as $n\to \infty$. Then,
\begin{align}
    f_n(\mu_n + \sigma_n \cdot t) \geq \frac{e^{-t^2/2}}{\sqrt{2\pi} \cdot \sigma_n } \left( 1 - \frac{t^3}{\sqrt{g(n)}} - \frac{1}{g(n)} \right). \nonumber
\end{align}
\end{lemma}
Recall that $M_n^- = \min_{i=1,\ldots,n} \pi_{i}$, where 
\begin{align*}
    \pi_i = \Pr\left[ \Beta(i,n-i+1) \leq p_{(i)} \right],\qquad i=1,\ldots,n.
\end{align*}
We use the sequence $t_n = 1/n$ to separate $H_0^{(n)}$ from $H_1^{(n)}$. The limiting distribution of $M_n$ under $H_0$ satisfies (see \cite{gontscharuk2015intermediates} and \cite[Thm 4.1]{moscovich2016exact}),
\begin{align*}
    \Pr_{H_0} \left[ M_n^- \leq \frac{x}{2\log(n) \log \log(n)} \right] \to 1-e^{-x},
\end{align*}
from which it follows that 
\begin{align}
    \Pr_{H_0} \left[ M_n^- \leq t_n \right] \to 0. \label{eq:proof:mosc:H0}
\end{align}

For $X \sim \Beta(i, n-i+1)$, set
\begin{align*}
 \mu_i := \ex{X} = \frac{i}{n+1},\qquad  \sigma_i^2 := \Var{X} = \frac{i(n-1+1)}{(n+1)^2(n+2)},
\end{align*}
hence, for $x \in \reals$,
\begin{align}
\label{eq:beta_x}
\frac{\mu_i - x}{\sigma_i} = 
    \sqrt{n}\frac{i/n - x}{\sqrt{\frac{i}{n} \left(1-\frac{i}{n}\right) }}(1+o(1)).
\end{align}

The proof of Theorem~\ref{thm:powerfulness} in Section~\ref{sec:proof:thm:HC} implies in particular
\begin{align}
\label{eq:last_display2}
\Pr_{H_1^{(n)}}\left[\max_{i=1,\ldots,n} \sqrt{n}\frac{i/n - p_{(i)}}{\sqrt{\frac{i}{n} \left(1-\frac{i}{n}\right) }} \geq \log(n) \right] \to 1. 
\end{align}
By \eqref{eq:beta_x} and \eqref{eq:last_display2}, for any $\delta>0$ there exists $n_0(\delta)$ and $i^* \in \{1,\ldots,n\}$ such that \begin{align}
\tau^* := \frac{\mu_{i^*} - p_{(i^*)}}{\sigma_{i^*}} \geq \sqrt{ 2\log(n)}, \nonumber
\end{align}
with probability at least $1-\delta$.  %
Denote by $f_i$ the density $f_i\,:[0,1] \to \reals^+$ of $\Beta(i,n-i+1)$. We have
\begin{align}
    \pi_{i^*} & = \int_{0}^{p_{(i^*)}} f_{i^*}(x)dx \nonumber \\
    & = \sigma_{i^*} \int_{-\mu_{i^*}/\sigma_{i^*}}^{\tau^*} f_{i^*}(\mu_{i^*}+\sigma_{i^*}t)dt \nonumber \\
    & \leq \sigma_{i^*} \int_{-\infty}^{\tau^*} f_{i^*}(\mu_{i^*}+\sigma_{i^*}t)dt \nonumber \\
    & \leq \int_{\tau^*}^\infty \frac{1+o(1)}{\sqrt{2\pi}}e^{-x^2/2} dx \label{eq:proof:mosco:1}\\
    & = (1-\Phi(\tau^*))(1+o(1)) \sim \frac{1}{\tau^*}e^{-\tau^{*2}/2} \label{eq:proof:mosco:2},
\end{align}
where \eqref{eq:proof:mosco:1} follows from Lemma~\ref{lem:Moscovich} and \eqref{eq:proof:mosco:2} is due to Mills' ratio. Consequently, 
\begin{align}
    \pi_{i^*} \leq n^{-1}, \quad \text{for}\quad n\geq n_0(\delta), \nonumber
\end{align}
with probability at least $1-\delta$. Hence, for $n \geq n_0(\delta)$,
\begin{align*}
\Pr_{H_1^{(n)}} \left[M_n \leq t_n \right] \geq \Pr_{H_1^{(n)}} \left[M_n^{-} \leq n^{-1} \right] \geq
\Pr_{H_1^{(n)}} \left[\pi_{i^*} \leq n^{-1}  \right] \geq 1-\delta.
\end{align*}
Combining the last equation with \eqref{eq:proof:mosc:H0}, we conclude that the sequence of thresholds $t_n = 1/n$ separates $H_0^{(n)}$ from $H_1^{(n)}$ as advertised. \qed

\subsection{Proof of
Theorem~\ref{thm:Bonferroni} }
Set $\eta := 1 - \alpha(1;r,\sigma)-\beta$. 
The condition  $r>\rho_\Bonf(\beta,\sigma)$ implies $\eta>0$. By continuity of $q \to \alpha(q;r,\sigma)$, there exits $\delta>0$ such that 
\begin{align}
\label{eq:Bonf:proof:cond}
    1 - \alpha(1+\delta;r,\sigma) -\beta > \eta/2. 
\end{align}
For the statistic $p_{(1)} := \min_{i=1\ldots,n} p_i$, we show that, along the sequence of thresholds $a_n  = n^{-(1+\eta/2)}$, we have  $\Pr_{H_0^{(n)}}(p_{(1)} > a_n)\to 1$ while $\Pr_{H_1^{(n)}}(p_{(1)} > a_n)\to 0$. Indeed,
\begin{align}
    \Pr_{H_0^{(n)}} \left[ p_{(1)} \leq a_n \right] & = 1- \prod_{i=1}^n  \Pr_{H_0} \left[ p_i > a_n \right] \nonumber \\
    & = 1- \left(1 - a_n\cdot n^{o(1)} \right)^n \label{eq:Bonf:proof:H0} \\
    & = 1- \left(1 - n^{-(1+\eta/2)+o(1)} \right)^n
    \to 0, \nonumber
\end{align}
where \eqref{eq:Bonf:proof:H0} follows from \eqref{eq:exp_asym}. 
On the other hand, 
\begin{align}
    \Pr_{H_1^{(n)}}\left[ p_{(1)} \leq a_n \right] & = 1 - \prod_{i=1}^n  \Pr_{H_1^{(n)}}\left[ p_i > a_n \right]\nonumber \\
    & = 1 - \prod_{i=1}^n  \left(1 - \Pr_{H_1^{(n)}} \left[ p_i \leq a_n \right] \right),
    \label{eq:Bonf:proof:H1}
\end{align}
hence it is enough to show that $\Pr_{H_1^{(n)}} \left[ p_i \leq a_n \right] > n^{-1+\eta/2 + o(1)}$ uniformly in $i$. 
For $i=1,\ldots,n$, let $X_i$ be a RV with law $-2\log(X_i) \overset{D}{=} Q_i^{(n)}$. We have:
\begin{align}
\Pr_{H_1^{(n)}} \left[ p_i \leq a_n \right] & = (1-\epsilon_n) a_n\cdot n^{o(1)} + \epsilon_n \Pr \left[X_i \leq a_n\right]  \nonumber \\
& \ge \epsilon_n \Pr \left[X_i \leq  a_n\right] \nonumber \\
& = n^{-\beta- \alpha(1+\delta;r,\sigma) + o(1) }
\label{eq:Bonf:proof:H1_3}\\
& \ge n^{-1 + \eta/2 + o(1) }
\label{eq:Bonf:proof:H1_4},
\end{align}
where \eqref{eq:Bonf:proof:H1_3} follows from \eqref{eq:APLCC_cond}, and \eqref{eq:Bonf:proof:H1_4} follows from \eqref{eq:Bonf:proof:cond}. 
\qed

\subsection{Proof of Theorem~\ref{thm:FDR} }
The proof is similar to the proof of Theorem~1.4 in \citep{donoho2004higher}. The main idea is to establish the following claims:
\begin{enumerate}
    \item[(i)] Inference based on FDR thresholding ignores P-values in the range $(n^{-q},1]$, for $q<1$. 
    \item[(ii)] When $r < \rho_\Bonf(\beta,\sigma)$, P-values smaller than $n^{-q}$ under $H_1^{(n)}$ are as frequent as under $H_0^{(n)}$. 
\end{enumerate}
In order to establish (i) and (ii), define, for an interval $I\subset [0,1]$, 
\begin{align}
    T_I := \min_{i\,:\,p_{(i)} \in I} \frac{p_{(i)}}{i/n}.  \nonumber
\end{align}
For some $q>0$ and a sequence $\{a_n\}_{n=1}^\infty$ of threshold values
with $\liminf_{n\to\infty} a_n = 0$, 
\begin{align}
& \left|\Pr_{H_0^{(n)}}\left[\text{FDR rejects} \right] - \Pr_{H_1^{(n)}} \left[\text{FDR rejects} \right] \right| =  \left|\Pr_{H_0^{(n)}}\left[T_{[0,1]} < a_n \right] - \Pr_{H_1^{(n)}}\left[T_{[0,1]} < a_n \right]  \right|  \nonumber \\
    &\qquad \leq \Pr_{H_1^{(n)}} \left[T_{(n^{-q},1]} < a_n \right] + \Pr_{H_0^{(n)}} \left[T_{(n^{-q},1]} < a_n \right]
    \label{eq:FDR:proof:2} \\
    & \qquad \qquad  + \left|\Pr_{H_1^{(n)}} \left[T_{(0,n^{-q}]} < a_n \right] - \Pr_{H_0^{(n)}} \left[T_{(0,n^{-q}]} < a_n \right] \right| \label{eq:FDR:proof:1}. 
\end{align}
Note that the terms in \eqref{eq:FDR:proof:2} are associated with (i) while \eqref{eq:FDR:proof:1} is associated with (ii). \par
Lemma~\ref{lem:F1} implies that the terms in \eqref{eq:FDR:proof:2} vanish as $n\to \infty$. We now focus on the term \eqref{eq:FDR:proof:1}. Let $I \subset \{1,\ldots,n\}$ be a random set 
such that $i \in I$ with probability $\epsilon_n=n^{-\beta}$. Considering this randomness, an equivalent way of specifying $H_1^{(n)}$ is
\begin{align}
    -2\log(p_i) \sim \begin{cases} Q_i^{(n)} & i \in I \\
    \lognull & i\neq I,
    \end{cases}
    \qquad i=1,\ldots,n. \nonumber
\end{align}
For $i=1,\ldots,n$, let $X_i$ be a RV satisfying $-2 \log(X_i) \overset{D}{=} Q_i^{(n)}$. Choose $r<q<1$ such that 
\begin{align}
    1-\alpha(q;r,\sigma)-\beta + \delta < 0
    \label{eq:Bonf_cond}
\end{align}
for some $\delta>0$, which is possible since $r < \rho_\Bonf(\beta,\sigma)<1$. Consider the event:
\[
E_n^{q} := \{ p_i \leq n^{-q} \text{ for some } i \in I \}.
\]
Conditioned on the event $|I|=M$, \begin{align}
    \Pr\left[ E_n^{q} \mid |I|=M\right] & = \Pr \left[ \min_{i=1,\ldots,n} X_i  \leq n^{-q} \mid |I|=M \right] \nonumber \\
    & \leq 1-\left(1-n^{-\alpha(q;r,\sigma)+o(1)}\right)^M \label{eq:FDR:proof:3}  \\
    & \leq M \cdot n^{-\alpha(q;r,\sigma)+o(1)} \label{eq:FDR:proof:4} 
\end{align}
where \eqref{eq:FDR:proof:3} follows from \eqref{eq:APLCC_cond} and \eqref{eq:FDR:proof:4} follows from the inequality $M \cdot \log(1+x) > \log(1+ M x)$, $x\geq -1$. As $M\sim \Bin(n,\epsilon_n)$, we have $\Prp{M < n^{1+\delta/2} \epsilon_n} = \Prp{M < n^{1+\delta/2-\beta} }  \to 1$, e.g. by the Chernoff bound. Consequently, for any $b>0$,
\begin{align*}
     \Prp{ M \cdot n^{-\alpha(q;r,\sigma)+o(1)} > b} & \leq o(1) + \One{\{ n^{1 + \delta/2 -\beta -\alpha(q;r,\sigma)+o(1)} > b \}} \to 0
\end{align*}
where the last transition is due to \eqref{eq:Bonf_cond}. It follows that $\Pr\left[ E_n^{q} \right]\to 0$. From here, and using
\begin{align*}
    \Pr_{H_1^{(n)}} \left[ T_{[0, n^{-q})} < a_n \mid (E_n^{q})^c \right] = \Pr_{H_0^{(n)}} \left[ T_{[0, n^{-q})} < a_n \right],
\end{align*}
we get
\begin{align*}
\Pr_{H_1^{(n)}} \left[T_{(0,n^{-q}]} < a_n \right]  & = \Pr\left[ (E_n^{q})^c \right] \Pr_{H_1^{(n)}} \left[ T_{[0, n^{-q})} < a_n \mid (E_n^{q})^c \right] \\
& \qquad +  \Pr\left[ E_n^{q} \right] \Pr_{H_1^{(n)}} \left[ T_{[0, n^{-q})} < a_n \mid E_n^{q} \right] \\
& = \Pr_{H_1^{(n)}} \left[ T_{[0, n^{-q})} < a_n \mid (E_n^{q})^c \right](1+o(1)) + o(1) \\
& = \Pr_{H_0^{(n)}} \left[ T_{[0, n^{-q})} < a_n  \right] + o(1).
\end{align*}
All this implies that  \eqref{eq:FDR:proof:1} vanishes as $n\to \infty$. The proof is completed. \qed

\subsection{Proof of Theorem~\ref{thm:Fisher} }
We consider first the case in which $\{-2\log(p_i)\}_{i=1}^n$ follow \eqref{eq:hyp_log_n} and later extend our arguments to the general case \eqref{eq:hyp_log_n_appx}. 
Since $F_n \sim \chi_{2n}^2$, under the null in \eqref{eq:hyp_log_n} we have 
\begin{align}
    \ex{F_n | H_0^{(n)}} = 2n,\qquad \Var{F_n | H_0^{(n)}} = 4n.  \nonumber
\end{align}
As $F_n$ is asymptotically normal, it is enough to show that 
\begin{align}
\ex{F_n | H_1^{(n)}}\sim 2n(1+o(1/\sqrt{n})), \quad \text{and}\quad \Var{F_n | H_1^{(n)}}\sim 4n(1+o(1)).
    \label{eq:proof:Fisher:to_show}
\end{align}
For $X \sim \chi^2(r,\sigma)$, we have
\begin{align}
\ex{X} = \mu_n(r)^2 + \sigma^2,\qquad \ex{X^2 } = \mu_n^4(r) +  4\mu_n^2(r) \sigma^2 + 3\sigma^4, \nonumber
\end{align}
and $\Var{X} = 2\mu_n^2(r) \sigma^2 + 2 \sigma^4$. Therefore, 
with $Q_i^{(n)} = \chi^2(r,\sigma)$,
\begin{align*}
    2n \leq \ex{F_n | H_1^{(n)}} & = 2n(1-\epsilon_n) + n\cdot\epsilon_n\left(2r\log(n)+\sigma^2\right) = 2n(1 + o(1/\sqrt{n}));
\end{align*}
in the last transition, we used  $\beta>1/2$. Similarly, we have 
\begin{align*}
4n \leq \Var{F_n^2 | H_1^{(n)}} & = 4n (1 - \epsilon_n) + n \cdot \epsilon_n \left( 4 r \log(n) \sigma^2 + 2 \sigma^2 \right) \\
& = 4n(1 + o(1/\sqrt{n})) = 4n(1 + o(1)),
\end{align*}
hence \eqref{eq:proof:Fisher:to_show} holds. 

For the general case of \eqref{eq:hyp_log_n_appx} under \eqref{eq:APLCC_cond}, first note that 
\begin{align}
    \exsub{X \sim E_i^{(n)}}{X} & = \int_{0}^\infty \Prp{X \geq x|X\sim E_i^{(n)}} dx \nonumber\\
    & = 2 \log(n) 
    \int_{0}^\infty \Prp{X \geq 2 y \log(n)|X\sim E_i^{(n)}} dy \nonumber \\
    & =  2 \log(n) 
    \int_{0}^\infty e^{-y \log(n) (1+o(1))} dy = 2(1+o(1)), \label{eq:Fisher:proof1}
\end{align}
and, by a change of variable and evaluation of the integral of the function $y \to ye^{-y}$,  
\begin{align*}
    \exsub{X \sim E_i^{(n)}}{X^2} & = \int_{0}^\infty x\Prp{X \geq x|X\sim E_i^{(n)}} dx \\
    & = (2 \log(n))^2
    \int_{0}^\infty y \Prp{X \geq 2 y \log(n)|X\sim E_i^{(n)}} dy \\
    & =  (2 \log(n))^2
    \int_{0}^\infty y e^{-y \log(n) (1+o(1))} dy \\
    & = \frac{(2 \log(n))^2}{(2 \log(n)(1+o(1))^2} = 4(1+o(1)).
\end{align*}
It follows that 
\begin{align}
\ex{F_n|H_0^{(n)}} = 2n(1+a_n),\quad \text{and}\quad \Var{F_n|H_0^{(n)}} = 4n(1+o(1)),
\label{eq:Fisher:proof_null}
\end{align}
where $a_n \to 0$. Next, notice that 
\begin{align}
    \exsub{X\sim Q_i^{(n)}}{X} & = \int_{0}^\infty \Prp{X \geq x|X\sim Q_i^{(n)}} dx  \nonumber \\
    & = 2 \log(n) \int_0^\infty \Prp{X \geq  2y \log(n) |X\sim Q_i^{(n)}} dy \nonumber \\
    & = 
     \int_0^\infty n^{-\alpha(y;r,\sigma)+o(1)} dy, \label{eq:proof:Fisher:1}  \\
     & = n^{o(1)}, \label{eq:proof:Fisher:2}
\end{align}
where \eqref{eq:proof:Fisher:1} follows from \eqref{eq:APLCC_cond} and \eqref{eq:proof:Fisher:2} is due to
\begin{align*}
\int_0^\infty n^{-\alpha(y;r,\sigma)}dy = 
\frac{\sigma ^2 n^{-\frac{r}{\sigma ^2}}}{\log (n)}+ 2\sigma \sqrt{ \frac{\pi r}{\log(n)}} \Phi \left(\frac{ \sqrt{ 2r \log (n)}}{\sigma }\right)= o(1).
\end{align*}
From \eqref{eq:Fisher:proof1} and because $\beta>1/2$,
\begin{align*}
2n \leq \ex{F_n|H_1^{(n)}} & = 2n(1-\epsilon_n)\left( 1 + a_n) \right) + \epsilon_n n^{1+o(1)}  \\
& \leq 2n(1+a_n) + o(1/\sqrt{n}).
\end{align*}
Similarly, 
\begin{align*}
    \exsub{X\sim Q_i^{(n)}}{X^2} & = \int_{0}^\infty x\Prp{X \geq x|X\sim Q_i^{(n)}} dx  \\
    & = (2 \log(n))^2 \int_0^\infty y \Prp{X \geq  2y \log(n) |X\sim Q_i^{(n)}} dy \\
    & = 
    (2 \log(n))^2 \int_0^\infty y \cdot n^{-\alpha(y;r,\sigma)+o(1)} dy  = n^{o(1)},
\end{align*}
where in the last transition we used that 
\[
\int_0^\infty y\cdot  n^{-\alpha(y;r,\sigma)} dy = o(1),
\]
as can be deduced from the analytic expression of this integral. We obtain
\begin{align}
4n \leq \ex{F_n^2|H_1^{(n)}} & = 4n(1-\epsilon_n)(1+o(1)) + n\cdot \epsilon_n \cdot n^{o(1)} \\
& = 4n(1+o(1)). \label{eq:Fisher:proof:H1_F2}
\end{align}

Evaluations similar to those in  \eqref{eq:Fisher:proof_null} and \eqref{eq:Fisher:proof:H1_F2} imply that $F_n$ satisfies the conditions of the Lyapunov central limit theorem for sums of independent but perhaps non-identically distributed RVs. Consequently, $F_n$ is asymptotically normal both under $H_0^{(n)}$ and $H_1^{(n)}$. Since we have
\begin{align*}
    \frac{\ex{F_n|H_1^{(n)}}-\ex{F_n|H_0^{(n)}} }{\sqrt{\Var{F_n|H_0^{(n)}} } } \to 0,\quad \text{and} \quad  \frac{\Var{F_n|H_0^{(n)}} }{\Var{F_n|H_1^{(n)}} } \to 1,
\end{align*}
we conclude that $F_n$ is asymptotically powerless 
(c.f. \cite[Lem. B.2]{ariascastro2013distributionfree}). 
\qed                                        

\section{Proofs of Results in Section~\ref{sec:models} \label{sec:proofs_of_props}}
\setcounter{equation}{0}

\subsection{Proof of Proposition~\ref{prop:poisson_pvals}}

We use Lemma~\ref{lem:Poisson} with $\lambda_n = \lambda_i$, 
$a_n=2q \log(n)$ and $b_n = 2r \log(n)$. We obtain 
\[
- \log \Prp{-2\log \Pp( X_i; \lambda) \geq 2q\log(n)} = \log(n)\alpha(q;r,1)(1+o(1)),
\]
where $o(1) \to 0$ independently of $\lambda_i$. Proposition~\ref{prop:poisson_pvals} follows. \qed

\subsection{Proof of Proposition~\ref{prop:twosample_Poisson}
\label{sec:prop:twosample_poisson:proof}
}
 Our analysis relies on moderate deviation estimate for variance-stabilized Poisson counts as provided in the following lemma from \citep{DonohoKipnis2020}.
\begin{lemma}{\citep[Lemma 5.3]{DonohoKipnis2020}}\label{lem:modorate_deviation}
Let $\Upsilon_\lambda',\Upsilon_\lambda$ denote two independent Poisson RVs. Let $a(\lambda)$ be a non-negative function. Consider a sequence of pairs $(\lambda, \lambda')$ such that 
$\lambda \to \infty$, $\lambda' \geq \lambda$, $\lambda'/\lambda \to 1$ as $n \to \infty$. Also suppose $a(\lambda) - (\sqrt{2\lambda'} - \sqrt{2\lambda}) \to \infty$ while $a(\lambda)/\lambda \to 0$. Then:
\[
\lim_{n \to \infty} \frac{1}{ \left(\sqrt{a(\lambda)}
-(\sqrt{2\lambda'}-\sqrt{2\lambda})
\right)^2}  \log \left[ \Pr\left( \sqrt{2\Upsilon_{\lambda'}}-\sqrt{2\Upsilon_{\lambda}} \geq \sqrt{a(\lambda)} \right) \right] = -\frac{1}{2}.
\]
\end{lemma} 

Let $W_i := \sqrt{2Y_i}-\sqrt{2X_i}$ and $S_i := -2\log(\pi_i)$. We have
\begin{align*}
     \Prp{S_i > 2q\log(n) } 
    & =  \Prp{2 \bar{\Phi}(W_i) < n^{-q} } \\
    & = \Prp{W_i > \bar{\Phi}^{-1}(n^{-q}/2) } 
\end{align*}
By Mill's ratio, 
\[
\bar{\Phi}^{-1}(n^{-q}/2) = \sqrt{2q \log(n)(1+o(1))}.
\]

Let $\lambda'_i = \lambda_i + \sqrt{\mu_n(r) \lambda_i}$. 
Note that 
$\lambda_i'/\lambda_i = 1 + \sqrt{2r \log(n)/\lambda_i} \to 1$ uniformly in $i\leq n$ by \eqref{eq:lambda_cond}, we have
\[
\sqrt{2 \lambda_i'} - \sqrt{2 \lambda_i} = \sqrt{r \log(n)}(1+o(1))
\]
where here and henceforth $o(1)$ indicates a sequence tending to $\to 0$ uniformly in $i$. Consequently,
\begin{align*}
\sqrt{a(\lambda_i)} - (\sqrt{2\lambda_i'} - \sqrt{2\lambda_i}) & = \sqrt{2q\log(n)(1+o(1))} -  \sqrt{2\lambda_i}\left(1 + o(1) \right) \\
& =: 2\log(n) \alpha(q;r/2,1)(1+o(1)). 
\end{align*}

The above evaluations show that 
$\lambda = \lambda_i$, 
$\lambda' = \lambda_i + \sqrt{2r \log(n) \lambda_i }$ satisfies the conditions of
Lemma~\ref{lem:modorate_deviation}. We obtain
\begin{align*}
      \log\left(\Prp{W_i > \bar{\Phi}^{-1}(n^{-q}/2)} \right) = -\log(n) \alpha(q;r/2,1)(1+o(1)),
\end{align*}
with $o(1)$ independent of $i$. It follows that
\begin{align*} 
\max_{1 \leq i \leq n} \left| \frac{-\log \Prp{ S_i > 2 q \log(n)}}{\log(n)} - \alpha(q;r/2,1) \right| \to 0.
\end{align*}


\subsection{Proof of Proposition~\ref{prop:binomial}}
\label{sec:proof:prop:binomial}

By the symmetry of $\Bin(m,1/2)$ around $m/2$, for $x \geq 0$ we have 
\begin{align*}
p_{\mathsf{Bin}}(x) & = \Prp{\left| \Bin(m,1/2) - \frac{m}{2} \right| \geq \left| x - m/2 \right|} \\& = 2 \Prp{ \frac{\Bin(m,1/2) - \frac{m}{2}}{\sqrt{m}/2}  \geq  \frac{|x - m/2|}{\sqrt{m}/2}}.
\end{align*}
A Berry-Essen-type argument applied to the binomial survival function (e.g. \cite[App. 1]{shorack2009empirical}) implies 
\[
\left|p_{\mathsf{Bin}}(x) - 2 \bar{\Phi}\left( t(x;m) \right) \right| \leq \frac{C_1}{\sqrt{t(x)}},\quad t(x;m) = \frac{|x-m/2|}{\sqrt{m}/2}. 
\]
for some constant $C_1$. Therefore, by Mill's ratio, as $t \to \infty$, 
\begin{align*}
-2\log p_{\mathsf{Bin}}(x) = \left( \frac{x - m/2}{\sqrt{m}/2} \right)^2 + O(1). 
\end{align*}
By standard central limit argument, 
\[
\frac{X - m(1/2+\delta)}{\sqrt{m(1/4-\delta^2)}} \overset{D}{=} Z + o_p(1),\quad Z \sim \Ncal(0,1),
\]
as $m \to \infty$, hence 
\[
t(X;m) + o_p(1) \overset{D}{=} \sqrt{1-4\delta^2} Z + 2\sqrt{m} \delta = \sqrt{1-s r} Z + \sqrt{2 r \log(n)}
\]
is unbounded in probability as $n \to \infty$. We obtain
\begin{align*}
 & \log \Prp{ -2\log p_{\mathsf{Bin}}(X) \geq 2 q \log(n)} = \log \Prp{ \left( \frac{X - m/2}{\sqrt{m}/2} \right)^2 + O_P(1) \geq 2 q \log(n)} \\
 & \qquad  = \log \Prp{  \frac{X - m/2}{\sqrt{m}/2}   \geq \sqrt{2 q \log(n)}(1+o(1)) } \\
 & \qquad = \log \Prp{  \sqrt{1- 4\delta^2} \frac{X - m(1/2+\delta)}{\sqrt{m(1/4- \delta^2)}} +
 2\sqrt{m}\delta \geq \sqrt{2 q \log(n)}(1+o(1)) } \\
 & \qquad = \log \Prp{  \sqrt{1- s r} \frac{X - m(1/2+\delta)}{\sqrt{m(1/4- \delta^2)}} +
 \sqrt{2 r \log(n)} \geq \sqrt{2 q \log(n)}(1+o(1))} \\
 & \qquad = \log \Prp{ \frac{X - m(1/2+\delta)}{\sqrt{m(1/4- \delta^2)}} \geq \sqrt{2 \log(n)}\frac{\sqrt{q} - \sqrt{r}}{\sqrt{1- rs}}(1+o(1))} \\
 & \qquad = \log \Prp{ \frac{X - m(1/2+\delta)}{\sqrt{m(1/4- \delta^2)}} \geq \sqrt{2 \log(n) \alpha(q;r, 1-sr) }(1+o(1))}. 
\end{align*}
From here, \eqref{eq:prop:binomial} follows by
applying Cram\'er's theorem \eqref{eq:cramer}.

\else
\section*{Supplementary Materials}
Contain proofs of all Theorems and Propositions. 
\par
\fi

	
\section*{Acknowledgment}
The author would like to thank David Donoho for discussions and comments on an early version of this manuscript and two anonymous reviewers who provided valuable suggestions that have improved the manuscript. Parts of this article were presented at the 2022 IEEE International Symposium on Information Theory (ISIT) \citep{kipnis2022isit}. Parts of the work were done while the author was with the Department of Statistics at Stanford University.

\bibhang=1.7pc
\bibsep=2pt
\fontsize{9}{14pt plus.8pt minus .6pt}\selectfont
\renewcommand\bibname{\large \bf References}
\expandafter\ifx\csname
natexlab\endcsname\relax\def\natexlab#1{#1}\fi
\expandafter\ifx\csname url\endcsname\relax
  \def\url#1{\texttt{#1}}\fi
\expandafter\ifx\csname urlprefix\endcsname\relax\def\urlprefix{URL}\fi

\bibliographystyle{chicago}      
\bibliography{HigherCriticism}  
\vskip .65cm
\noindent
School of Computer Science, Reichman University
\vskip 2pt
\noindent
E-mail: (\texttt{alon.kipnis@runi.ac.il})
\vskip 2pt

\end{document}


\headrulewidth=0pt

\renewcommand{\baselinestretch}{2}

\markright{ \hbox{\footnotesize\rm Statistica Sinica: Supplement
}\hfill\\[-13pt]
\hbox{\footnotesize\rm
}\hfill }

\markboth{\hfill{\footnotesize\rm Alon Kipnis} \hfill}
{\hfill {\footnotesize\rm Rare/Weak models and log-chisquared P-values} \hfill}

\renewcommand{\thefootnote}{}
$\ $\par \fontsize{12}{14pt plus.8pt minus .6pt}\selectfont


 \centerline{\large\bf }
\vspace{2pt}

\centerline{\large\bf 
Unification of Rare and Weak Multiple Testing }
\vspace{2pt} 
\centerline{\large\bf Models using Moderate Deviations Analysis }
\vspace{2pt}
\centerline{\large \bf and Log-Chisquared P-values}
\vspace{.25cm} 
\centerline{Alon Kipnis} 
\vspace{.4cm} 
\centerline{\it Reichman University}
\vspace{.55cm}
 \centerline{\bf Supplementary Materials}
\vspace{.55cm}
\fontsize{9}{11.5pt plus.8pt minus .6pt}\selectfont
\noindent
\etocsetnexttocdepth{5}
\etocsettocstyle{\subsubsection*{Contents}}{}
\localtableofcontents

\par

\def\theequation{S\arabic{section}.\arabic{equation}}
\def\thesection{S\arabic{section}}

\fontsize{12}{14pt plus.8pt minus .6pt}\selectfont

\section{Technical Lemmas \label{sec:proofs_of_lemmas}}
\setcounter{equation}{0}
\begin{lemma}\label{lem:E_prob_to_Exp}
Let $\{P_i^{(n)}\}_{i=1}^n$ be a sequence of probability distributions, each $P_i^{(n)}$ has density whose support is contained in $[0,\infty)$. Fix $q>0$. If,
\begin{align}
    \lim_{n \to \infty} \max_{i=1,\ldots,n}  \frac{\left|\log\left(\frac{d P_i^{(n)}}{d\Exp(2)} (2q\log(n)) \right) \right| }{\log(n)}  = 0,
    \label{eq:exp_asym_lemma}
\end{align}
then
\begin{align}
    \lim_{n\to \infty} \max_{i=1,\ldots,n} \left| \frac{-\log  \Pr \left[P_i^{(n)} \geq 2q\log(n) \right]}{\log(n)}  - q \right|.
    \label{eq:E_prob_to_Exp}
\end{align}
\end{lemma}
\begin{proof} 
The assumption on the density of $P_i^{(n)}$ ensures that it is absolutely continuous with respect to $\lognull$. Fix $q>0$. We can write \eqref{eq:exp_asym_lemma} as 
\[
\frac{d P_i^{(n)}}{d\lognull} (2q\log(n)) = n^{o(1)},
\]
where $o(1) \to 0$ uniformly in $i$ for every fixed $q$. From
\[
\frac{d \lognull}{dx}(x) = \frac{e^{-x/2}}{2},
\]
we get
\begin{align*}
    \Prp{P_i^{(n)} \geq 2 q \log(n)} & = \int_{2 q \log(n)}^{\infty} \frac{d P_i^{(n)}}{dx} dx  \\
    & = \int_{2 q \log(n)}^{\infty} \frac{d P_i^{(n)}}{d \lognull} \frac{d\lognull}{dx} dx \\
    & = \int_{2 q \log(n)}^{\infty} n^{o(1)} e^{ -x/2} /2 dx \\
    & = n^{o(1)} e^{-q \log(n)}/2 = n^{-q + o(1)}
\end{align*}
This implies \eqref{eq:E_prob_to_Exp}.

\end{proof}

We will use the following lemma from \citep{cai2014optimal}, providing a particular version of Laplace's principle. 
\begin{lemma}{\cite[Lemma 3]{cai2014optimal}}
\label{lem:M}
Let $\left(X,\mathcal{F},\nu \right)$ be a measure space. Let $F\,:\,X \times \reals_+ \to \reals_+$ be measurable. Assume that 
\begin{align}
    \lim_{M\to\infty} \frac{\log F(x,M)}{M} = f(x)
    \label{eq:integrability_condition}
\end{align}
holds uniformly in $x\in X$ for some measureable $f\,:\,X\to \reals$. If
\[
\int_X \exp(M_0 f(x))d\nu(x)<\infty
\]
for some $M_0>0$, then
\begin{align}
    \lim_{M\to \infty} \frac{1}{M} \log \int_X F(x,M) d\nu(x) = \mathrm{ess}\sup_{x\in X} f(x). \nonumber
\end{align}
\end{lemma}

\begin{lemma}
\label{lem:LR_to_E}
Suppose that $\{Q_i^{(n)}\}_{i=1}^n$ satisfy \eqref{eq:APLCC_cond}, $\{E_i^{(n)}\}_{i=1}^n$ satisfy \eqref{eq:exp_asym}, $Q_i^{(n)}$ is absolutely continuous with respect to $E_i^{(n)}$, and $E_i^{(n)}$ is absolutely continuous with respect to the Lebesgue measure on $[0,\infty)$. Set 
\begin{align}
    L_i^{(n)}(x) := \frac{d Q_i^{(n)}}{d E_i^{(n)}}(x).  \nonumber
\end{align}
and 
\begin{align*}
    \alpha^*(q; r, \sigma) := \max_{y\in[r,q]} \left\{- 2\alpha(y;r,\sigma)+y \right\}.
\end{align*}
Assume that
\begin{align}
    \lim_{n\to \infty} \max_{i=1\ldots,n} \frac{ \left|\log\left( \frac{dQ_i^{(n)}}{d \chi^2(r,\sigma)} (2q\log(n))\right)\right|}{\log(n)} = 0,\quad \forall q\in (r,r+a),
    \label{eq:lem:LR_equivalence}
\end{align}
for some $a>0$ and $r>0$. Then, for any fixed $q\in (r,r+a)$,
\begin{align}
    \lim_{n\to \infty} \max_{i=1\ldots,n} \left|\frac{-\log\left(\exsub{X\sim Q_i^{(n)}}{L_i^{(n)}(X) \One{\{X \leq 2q\log(n) \}}} \right)}{\log(n)} -  \alpha^*(q; r ,\sigma) \right| = 0.
\label{eq:general_cond_B_proof}
\end{align}
\end{lemma}

\begin{proof} 
Fix $q \in (r, r+a)$. We have
\begin{align}
    & \exsub{X\sim Q_i^{(n)}}{ L_i^{(n)}(X) \One{ \{X \leq 2q \log(n)\} } }  = 
    \exsub{X\sim E_i^{(n)}}{ (L_i^{(n)})^2(X) \One{\{X \leq 2q \log(n) \} } } \nonumber \\
    & \qquad  = \int_0^{2q \log(n)} \left(\frac{d Q_i^{(n)}}{d E_i^{(n)}}(x)\right)^2  E_i^{(n)}(dx) \nonumber  \\
    & = 2\log(n) \int_0^q  \left(\frac{d Q_i^{(n)}}{d \E_i^{(n)}} (2 \log(n) y)\right)^2 E_i^{(n)}(2\log(n)dy)
    \label{eq:proof_L2_equiv1}
    \\
    & = \log(n)\int_0^q  \left(\frac{d Q_i^{(n)}}{d \E_i^{(n)}} (2 \log(n) y)\right)^2 e^{-y\log(n)(1+o(1))} dy
    \label{eq:proof_L2_equiv2}
    \\
    & = \log(n) \int_0^q n^{-2\alpha(y;r,\sigma)+2y+o(1)} \cdot n^{o(1)} \cdot
    n^{-y} dy = \int_0^q n^{-2\alpha(y;r,\sigma)+y+o(1)} dy,
    \label{eq:proof_L2_equiv3}
\end{align}
where \eqref{eq:proof_L2_equiv1} follows from the change of variables $x = 2y \log(n)$,  \eqref{eq:proof_L2_equiv2} follows from Lemma~\ref{lem:E_prob_to_Exp}, and \eqref{eq:proof_L2_equiv3} follows from \eqref{eq:lem:LR_equivalence}. Furthermore, $o(1)$ in \eqref{eq:proof_L2_equiv2}-\eqref{eq:proof_L2_equiv3} represents a sequence tending to zero uniformly in $i$ and $y\in[0,q]$. We now apply Lemma~\ref{lem:M} to \eqref{eq:proof_L2_equiv3} with $X=[r,q]$, $M=\log(n)$, $F(x,M)=n^{-2\alpha(x;r,\sigma)+x+o(1)}$, $f(x) = -2\alpha(x;r,\sigma)+x$, and $\nu$ the Lebesgue measure. We obtain:
\begin{align*}
    \lim_{n \to \infty} \sup_{i=1,\ldots,n} \frac{ \log\left( \exsub{X\sim Q_i^{(n)}}{ L_i^{(n)}(X) \One{\{X > 2q \log(n)\} } }
     \right)}{\log(n)} & = \max_{y\in[r,q]} \left\{- 2\alpha(y;r,\sigma)+y \right\} \\
     & = -\alpha^*(q; r ,\sigma)
\end{align*}
Equation \eqref{eq:general_cond_B_proof} follows. 
\end{proof} 

The following lemma summarizes the truncated likelihood ratio method from \citep{ingster2012nonparametric}.  
\begin{lemma}\label{lem:Ingster}
Consider testing  
\begin{align}
    H_0^{(n)}\,&:\,(X_1,\ldots,X_n)\sim P_0^{(n)} \nonumber
\end{align}
versus
\begin{align}
    H_1^{(n)}\,&:\,(X_1,\ldots,X_n)\sim P_1^{(n)} \nonumber
\end{align}
for $P_1^{(n)}$ that is absolutely continuous with respect to $P_0^{(n)}$. Denote by $L_n = \frac{dP_1^{(n)}}{dP_0^{(n)}}$ the likelihood ratio between $P_1^{(n)}$ and $P_0^{(n)}$. Suppose that there exists a sequence of sets $A^{(n)} \subset \reals^n$ such that 
\begin{align}
\label{eq:first_moment}
    1 - \exsub{H_0^{(n)}}{L_n(X_1,\ldots,X_n) \One{(X_1,\ldots,X_n) \in A^{(n)}} } \leq  o(1) 
\end{align}
while 
\begin{align}
\label{eq:second_moment}
    \exsub{H_0^{(n)}}{L_n^2(X_1,\ldots,X_n) \One{(X_1,\ldots,X_n) \in A^{(n)} } }  \leq 1 + o(1).
\end{align}
For any sequence of tests $\psi^{(n)}: \reals^{n} \to \{0,1\}$,  \begin{align*}
    \liminf_{n\to \infty} \left\{ \exsub{H_0^{(n)}}{\psi^{(n)}(X_1,\ldots,X_n)} + \exsub{H_1^{(n)}}{1-\psi^{(n)}(X_1,\ldots,X_n)} \right\} \geq 1.
\end{align*}
\end{lemma}
\begin{proof}
Set
\begin{align*}
    \tilde{L}_n := \tilde{L}_n(X_1,\ldots,X_n) := L_n (X_1,\ldots,X_n) \One{A^{(n)}}(X_1,\ldots,X_n).
\end{align*}    
Conditions \eqref{eq:first_moment} and \eqref{eq:second_moment} imply
\begin{align*}
    \exsub{H_0^{(n)}}{\tilde{L}_n} = \left(\exsub{H_0^{(n)}}{\tilde{L}_n^2}-1\right) - 2\left( \exsub{H_0^{(n)}}{\tilde{L}_n} - 1 \right) \leq o(1),
\end{align*}
hence $\tilde{L}_n(X) \to 1$ in probability under $H_0^{(n)}$. Next, for some $\psi^{(n)} : \reals^n \to \{0,1\}$ and $\epsilon>0$,
\begin{align*}
     & \exsub{H_0^{(n)}}{\psi^{(n)}} + \exsub{H_1^{(n)}}{1-\psi^{(n)}} \\
     & = \exsub{H_0^{(n)}}{\psi^{(n)} + L_n(1-\psi^{(n)})} \\
    & \geq \exsub{H_0^{(n)}}{\psi^{(n)} + \tilde{L}_n(1-\psi^{(n)})} 
    \\
    & \geq \exsub{H_0^{(n)}}{\psi^{(n)} + \tilde{L}_n(1-\psi^{(n)}) \mid |\tilde{L}_n-1| < \epsilon }\Prp{|\tilde{L}_n-1| < \epsilon} \\
    & \geq \exsub{H_0^{(n)}}{\psi^{(n)} + (1-\epsilon)(1-\psi^{(n)})  }\Prp{|\tilde{L}_n-1| < \epsilon} \\
    & \geq (1 -\epsilon)\Prp{|\tilde{L}_n-1| < \epsilon} = (1-\epsilon)(1+o(1)).
\end{align*}
As $\epsilon>0$ is arbitrary, we have that 
\begin{align*}
    \liminf_{\psi^{(n)}} \left\{ \exsub{H_0^{(n)}}{\psi^{(n)}} + \exsub{H_1^{(n)}}{1-\psi^{(n)}} \right\} \geq 1.
\end{align*} 
\end{proof}

\begin{lemma} \label{lem:under_null0}
Let $q\in(0,1]$ be fixed. Let $U_1,\ldots,U_n$ be $n$ independent RVs satisfying $\Prp{U_i \leq n^{-q}}=n^{-q}(1+a_{n,i}(q) )$, and denote by
\[
F_n(t) := \frac{1}{n}\sum_{i=1}^n \One{\{U_i \leq t\}}
\]
their empirical CDF. If $\bar{a}_n(q) := n^{-1} \sum_{i=1}^n a_{n,i}(q) \leq n^{\frac{q-1}{2}}$, then
\begin{align}
\label{eq:under_null0}
\Prp{ \sqrt{n}\frac{F_{n}(n^{-q})-n^{-q}}{\sqrt{n^{-q}(1-n^{-q})}} \geq \log(n) } \to 0
\end{align}
\end{lemma}

\begin{proof}
Denote $t_n = n^{-q}$. We have that $\ex{F_n(t_n)}=t_n(1+\bar{a}_n(q))$. If $\bar{a}_n(q) \leq 0$ for all $n \geq n_0$ for some $n_0$, then \eqref{eq:under_null0} holds. Otherwise, we assume without loss of generality that $r_n := \ex{F_n(t_n) - t_n} = n^{-q}\bar{a}_n(q) >0$ for all $n$, since the complementary case can be handled by considering only a sub-sequence with that property. Write
\begin{align*}
\Prp{ \sqrt{n}\frac{F_{n}(t_n)-t_n}{\sqrt{t_n(1-t_n)}} \geq \log(n)} & = \Prp{F_{n}(t_n)-t_n \geq (1+\delta) r_n},
\end{align*}
where 
\begin{align*}
    \delta := -1 + \frac{\sqrt{t_n(1-t_n)} \log(n) }{r_n\sqrt{n}} & \geq -1 + \log(n) n^{\frac{q-1}{2}} (\bar{a}_n(q))^{-1} \sqrt{1-1/n} \\
    & \geq -1 + \log(n) (1+o(1)).
\end{align*}
We have that $\delta \to \infty$. For $X$ the sum of $n$ independent Bernoulli RVs with $\mu = \ex{X}$, the Chernoff inequality \cite[Ch 4.]{mitzenmacher2017probability} says
\[
\Pr \left( X \geq (1+\delta)\mu \right) \leq \left( \frac{e^{-\delta}}{(1+\delta)^{1+\delta}} \right)^\mu \leq e^{-\mu \frac{\delta^2}{2+\delta}},\quad \mu = r_n,\quad  \delta\in (0,\infty). 
\]
We use this inequality with $X = n F_{n}(t) = \sum_{i=1}^n \One{\{U_i \leq t\}}$. For $n$ large enough such that $\delta>2$, we obtain
\begin{align*}
    & 
    -\log \Prp{ \sqrt{n}\frac{F_{n}(t_n)-t_n}{\sqrt{t_n(1-t_n)}} \geq \log(n) } \geq \frac{\delta^2 n}{2+\delta} r_n \geq \frac{\delta \cdot n}{2} r_n \\
    & \qquad \geq \frac{n}{2} \left(n^{-1/2}\log(n)-  r_n \right) = \frac{n^{0.5}}{2} \left(\log(n)- n^{-q+1/2} \bar{a}_n(q) \right) \to \infty.
\end{align*}
\end{proof}

\begin{lemma}{\cite[Lem. 5.7 ]{DonohoKipnis2020}}
\label{lem:Donoho_kipnis}
Let $\alpha(\cdot)$ and $\gamma(\cdot)$ be two real-valued functions $\alpha, \gamma : [0,\infty) \to [0,\infty)$. Let $q\in(0,1)$ and $\beta>0$ be fixed. Let $F_n(t)$ be the normalized sum of $n$ independent RVs. Suppose that
\begin{align*}
\ex{F_n(n^{-q})} = n^{-q+o(1)}(1-n^{-\beta}) +
n^{-\beta} n^{-\alpha(q)+o(1)}.
\end{align*}
Let $\{a_n\}_{n=1}^\infty$ be a positive sequence obeying $a_n n^{-\eta} \to 0$ for any $\eta>0$. If
\begin{align*}
    \delta(q)+\beta < \gamma(q),
\end{align*}
then
\[
\Prp{ n^{\gamma(q)} (F_{n}(n^{-q}) -n^{-q}) \leq a_n} \to 0, \qquad n\to \infty.
\]
\end{lemma}

\begin{lemma}\label{lem:F1}
Assume that $r < \rho_{\Bonf}(\beta,\sigma)$. Consider $p_1,\ldots,p_n$ as in \eqref{eq:hyp_log_n_appx}. For an interval $I\subset [0,1]$, define
\begin{align}
    T_I := \min_{i\,:\,p_{(i)} \in I} \frac{p_{(i)}}{i/n}.  \nonumber
\end{align}
For any $0<a<1$ and $q<1$, 
\begin{align}
    \Pr_{H_1^{(n)}} \left[ T_{(n^{-q},1]} \leq a \right] \to 0. \nonumber
\end{align}
\end{lemma}
\begin{proof}
Let $F_n(t) := n^{-1}\sum_{i=1}^n \One{p_i \leq t}$ be the empirical CDF of $p_1,\ldots,p_n$. Note that $i/n = F_n(p_{(i)})$, hence 
\begin{align}
 \frac{p_{(i)}}{i/n} \leq a \Longleftrightarrow F_n ( p_{(i)}) \geq p_{(i)}/a. \nonumber
\end{align}
Consequently, 
\begin{align}
    \Pr_{H_1^{(n)}} \left[ T_{(n^{-q},1]} \leq a \right] & \leq \sup_{t > n^{-q}} \Pr_{H_1^{(n)}} \left[ F_n(t) 
     \geq t/a \right] \nonumber \\
     & = \sup_{t > n^{-q}} \Pr_{H_1^{(n)}} \left[ nF_n(t) 
     \geq nt/a \right] \nonumber \\
     & = \sup_{t > n^{-q}} \Pr_{H_1^{(n)}} \left[ nF_n(t) 
     \geq \mathbb E_{{H_1^{(n)}}} \left[ {nF_n(t)}\right](1+\kappa) \right], \label{eq:lem:F1:proof:eta}
\end{align}
where 
\begin{align}
    \kappa := \kappa(n,a,t) := \frac{t}{a\mathbb E\left[F_n(t)\right]} - 1.  \nonumber
\end{align}
Let $U_i\sim \Unif(0,1)$ and $-2\log(X_i) \sim Q_i^{(n)}$, for $i=1,\ldots,n$. Using the parameterization $t_n = n^{-q'}$, $q' \leq q<1$, 
\begin{align}
    \mathbb E_{H_1^{(n)}}\left[ F_n(t_n)\right] & = \frac{1}{n}\sum_{i=1}^n \Pr_{H_1^{(n)}}\left[ p_i \leq n^{-q'} \right] \\
    & = (1-\epsilon_n) \Prp{ U_i \leq n^{-q'} }  + \epsilon_n \Prp{ X_i \leq n^{-q'} } \\
    & = (1-\epsilon_n)n^{-q'} + n^{-\beta} \cdot n^{-\alpha(q';r,\sigma) + o(1)} \\
    & = 1-\epsilon_n +  n^{-\alpha(q';r,\sigma)+o(1)-\beta}, \label{eq:last_disp1}
\end{align}
where the last transition follows from \eqref{eq:APLCC_cond}. The condition $r < \rho_{\Bonf}(\beta,\sigma)$ implies
\[
\sup_{q' \leq q<1} \left(q' - \beta - \alpha(q';r,\sigma) \right) \leq q - \beta - \alpha(1;r,\sigma) < 0.
\]
Therefore, from \eqref{eq:last_disp1} we get $\exsub{H_1^{(n)}}{F_n(t_n)}/n^{-q'}\to 1$. Because $F_n(t)$ has at most $n$ jumps, a simple grid argument implies 
\begin{align}
\inf_{t > n^{-q}} \frac{t}{\exsub{H_1^{(n)}}{F_n(t)}} = 1 + o(1). \nonumber
\end{align}
As $a < 1$, there exists $n_0(q)$ such that 
\[
\inf_{t>n^{-q}} \kappa(n,a,t) \geq (1/a - 1)/2 > 0,\qquad n \geq n_0(q).
\]
Set $b:= (1/a - 1)/2>0$. Applying the Chernoff inequality (c.f. \cite[Ch. 4]{mitzenmacher2017probability}) to the sum of independent RVs $n F_n(t)$ in  \eqref{eq:lem:F1:proof:eta}, we obtain
\begin{align*}
    \Pr_{H_1^{(n)}} \left[ T_{(n^{-q},1]} \leq a \right] &  \leq \sup_{t > n^{-q}} \exp\left\{-\frac{n}{a} \frac{\kappa^2}{1+\kappa} \mathbb E_{H_1^{(n)}}\left[F_n(t)\right]  \right\} \\
    & \leq \exp \left\{- \frac{n}{2a}  \inf_{t > n^{-q}} \frac{\kappa^2}{1+\kappa}   
    E_{H_1^{(n)}}\left[F_n(t)\right]  \right\} \\ 
    & = \exp \left\{- \frac{1}{2a} \frac{b^2}{1+b} n^{1-\alpha(q;r,\sigma) +o(1)-\beta} \right\} \to 0,
\end{align*}
where the last transition follows because $r < \rho_\Bonf(\beta,\sigma)$ implies $\beta+\alpha(q;r,\sigma) \leq \beta+\alpha(1;r,\sigma) < 1$. 
\end{proof}

\begin{lemma}
    \label{lem:Poisson}
    Let $\{a_n\}$,$\{b_n\}$, and $\{\lambda_n\}$ be non-negative sequences such that, as $n \to \infty$, 
    $a_n \to \infty$, $\lambda_n \to \infty$, $a_n/\lambda_n \to 0$, and $a_n/b_n \to c$ for some $c>1$. For $\lambda' = \lambda_n + \sqrt{\lambda_n b_n}$ and $\Upsilon_{\lambda'} \sim \Pois(\lambda')$, 
    \begin{align}
        \lim_{n \to \infty} \frac{\Prp{-2\log \Pp(\Upsilon_{\lambda'};\lambda_n) \geq a_n}} {(\sqrt{a_n} - \sqrt{b_n})^2} = -\frac{1}{2}. \nonumber
    \end{align}
\end{lemma}

\begin{proof}
We first develop a moderate deviation estimate for the Poisson survival function. From
\begin{align*}
    \Pp(x;\lambda) & = e^{-\lambda} \sum_{k=x}^\infty \frac{\lambda^k}{k!} \\
    & = e^{-\lambda} \frac{\lambda^x}{x!} \left( 1 + \frac{\lambda^{x+1}}{(x+1)} + \frac{\lambda^{x+2}}{(x+1)(x+2) + \ldots} \right),
\end{align*}
we get
    \begin{align*}
        -\log \Pp(x;\lambda) = \lambda - x \log(\lambda) + \log \Gamma(x) + R(x;\lambda),
    \end{align*}
where $\Gamma(x)$ is the Gamma function and
\[
R(\lambda;x) := \log\left(1 + \frac{\lambda^{x+1}}{(x+1)} + \frac{\lambda^{x+2}}{(x+1)(x+2) + \ldots} \right) \leq -\log \left( 1- \frac{\lambda}{1+x} \right) = O(\lambda/x). 
\]
Furthermore, 
\[
\log \Gamma(x) = (x-\frac{1}{2})\log(x) - x + O(1/x) 
\]
Therefore, for $x > \lambda > 1$, we have that $t \leq -\log \Pp(x;\lambda)$ iff
\[
t \leq -(x-\lambda) + x \log(x/\lambda) + O(\lambda/x) = -(x-\lambda) x + x \left( \frac{x-\lambda}{\lambda} \right)  + O(\lambda/x),
\]
if and only if $
0 = x^2 - 2x \lambda + \lambda^2 - t \lambda + o(\lambda/x)$. Solving the last equation for $x>0$, we obtain $t \leq -\log \Pp(x;\lambda)$ if
\begin{align}
x & \geq \lambda + \sqrt{ t \lambda} + O(\sqrt{\lambda/x}).
\label{eq:lem:Poisson:2}
\end{align}

Next, consider the event $A = \{\Upsilon_{\lambda'} \geq \lambda_n\}$. We have
\begin{align}
\Prp{-\log \Pp(\Upsilon_{\lambda'};\lambda_n) \geq a_n | A} & \overset{a}{=} \Prp{ \Upsilon_{\lambda'} \geq \lambda_n + \sqrt{\lambda_n a_n} + O(\sqrt{\lambda_n/\Upsilon_{\lambda'})} |A} \nonumber \\
= & \Prp{\Upsilon_{\lambda'} \geq \lambda_n + \sqrt{a_n \lambda_n} + O(1) |A} \nonumber \\
= & \Prp{\Upsilon_{\lambda'} \geq (\lambda_n + \sqrt{a_n \lambda_n})(1+o(1)) |A} \nonumber \\
\overset{b}{=} & \Prp{\Upsilon_{\lambda'} \geq (\lambda' + \sqrt{\lambda'}\left(\sqrt{a_n} - \sqrt{b_n}\right)(1+o(1)) |A} \nonumber \\
& = \Prp{\Upsilon_{\lambda'} \geq \lambda' + \sqrt{\lambda'}\sqrt{c_n} |A}
\label{eq:lem:Poisson:1},
\end{align}
where $\{c_n\}$ is a sequence satisfying
\begin{align}
\frac{\sqrt{c_n}}{{\sqrt{a_n}} - \sqrt{b_n}} \to 1\quad \text{as}\quad n \to \infty. 
\label{eq:lem:Poisson:3}
\end{align}
In the arguments leading to \eqref{eq:lem:Poisson:1}, $(a)$ is due to \eqref{eq:lem:Poisson:2} and $(b)$ is due to 
\begin{align*}
\lambda_n + \sqrt{\lambda_n a_n}& = \lambda' + \sqrt{\lambda'}\left(\sqrt{a_n} - \sqrt{b_n} \right)\sqrt{ \lambda_n/\lambda'} \nonumber \\
& = \lambda' + \sqrt{\lambda'}\left(\sqrt{a_n} - \sqrt{b_n} \right)(1 + o(1)),
\end{align*}
the last transition because $b_n/\lambda_n \to 0$. 

Since $\sqrt{\lambda_n b_n/\lambda'} \to \infty$, the normal approximation $\Upsilon_{\lambda'} \sim \Ncal(\lambda',\lambda')$ implies
\[
\Prp{A} \sim \Prp{ \sqrt{\lambda'}Z + \lambda' \geq \lambda_n} = \Prp{ Z \geq -\sqrt{\lambda_n b_n/\lambda'} } \to 1.
\]
We obtain 
\begin{align*}
\lim_{n \to \infty} \frac{\log\Prp{-\log \Pp(\Upsilon_{\lambda'};\lambda_n) \geq a_n}}{(\sqrt{a_n} - \sqrt{b_n})^2 } & = 
\lim_{\lambda_n \to \infty} \frac{\log\Prp{-\log \Pp(\Upsilon_{\lambda'};\lambda_n) \geq a_n|A}}{(\sqrt{a_n} - \sqrt{b_n})^2 } \\
& = \lim_{\lambda_n \to \infty} \frac{\log\Prp{\Upsilon_{\lambda'} \geq \lambda_n + \sqrt{a_n \lambda_n} |A}}{(\sqrt{a_n} - \sqrt{b_n})^2 } \\
& = \lim_{n \to \infty} \frac{\log\Prp{\Upsilon_{\lambda'} \geq \lambda_n + \sqrt{a_n \lambda_n}}}{(\sqrt{a_n} - \sqrt{b_n})^2 } \\
& \overset{c}{=} \lim_{n \to \infty} \frac{\log\Prp{\Upsilon_{\lambda'} \geq \lambda'
+ \sqrt{\lambda' c_n }}}{(\sqrt{a_n} - \sqrt{b_n})^2 } \\
& \overset{d}{=} \lim_{n \to \infty} \frac{\log\Prp{\Upsilon_{\lambda'} \geq \lambda'
+ \sqrt{\lambda' c_n }}}{c_n} \\
& \overset{e}{=} -\frac{1}{2}
\end{align*}
where $(c)$ is due to \eqref{eq:lem:Poisson:1}, $(d)$ follows from \eqref{eq:lem:Poisson:3}, and in $(e)$ we used the following moderate deviation estimate for a Poisson RV from \citep{arias2015sparse}.
\begin{lemma}{\cite[Lemma 2]{arias2015sparse}}
\label{lem:arias2015}
Let $c : (0,\infty) \to (0, \infty)$ be such that $c(\lambda)\to \infty$ and $c(\lambda)/\lambda \to 0$ as $\lambda \to \infty$. Then
\[
\lim_{\lambda \to \infty} \frac{\log \left( \Upsilon_{\lambda} \geq \lambda + \sqrt{\lambda c(\lambda)} \right)}{ c(\lambda)} = \frac{-1}{2}
\]
\end{lemma}
This completes the proof of Lemma~\ref{lem:Poisson}.
\end{proof}

\section{Proofs of Results in Section~\ref{sec:APLC}} 
\setcounter{equation}{0}


\subsection{Proof of Theorem~\ref{thm:powerlessness}
\label{sec:proof:powerlessness}
}

Note that for $a<r$, 
\[
\frac{a}{2} - \alpha(a; r, \sigma) \leq \frac{r}{2} - \alpha(r; r, \sigma) = \frac{r}{2},
\]
hence the maximum in \eqref{eq:rho_as_minmax} is not attained in the interval $q \in [0,r)$. Hence, by \eqref{eq:rho_as_minmax}, for $r<\rho^*(\beta,\sigma)$ there exists $\delta>0$ such that 
\begin{align}
\label{eq:HC_cond0}
      \max_{q\in [r,1]} \left( \frac{1+q}{2} - \alpha^*(q;r,\sigma)\right) + \delta - \beta < 0.
\end{align}
In particular, 
\begin{align}
\label{eq:HC_cond1}
      1 - \beta - \alpha^*(1;r,\sigma) \leq -\delta,
\end{align}
and, by continuity of $q\to \alpha^*(q;r,\sigma)$, there exists $\eta \in (0,\gamma)$ such that 
\begin{align}
\label{eq:HC_cond2}
    \quad 1 - 2\beta - \alpha^*(1+\eta;r,\sigma) < - \delta. 
\end{align}

Fix $\delta>0$ satisfying \eqref{eq:HC_cond0}, and let $\eta>0$ satisfy \eqref{eq:HC_cond2}. We now use Lemma~\ref{lem:Ingster} with 
\begin{align*}
P_0^{(n)} = \prod_{i=1}^n E_i^{(n)}, \qquad P_1^{(n)} = \prod_{i=1}^n\left[ (1-\epsilon) E_i^{(n)} + \epsilon Q_i^{(n)} \right],
\end{align*}
where $\{E_i^{(n)} \}$ satisfy \eqref{eq:exp_asym}, and 
\begin{align*}
    A^{(n)} = \prod_{i=1}^n \{X_i \leq 2(1+\eta)\log(n)\}.
\end{align*}
We have 
\begin{align}
    \tilde{L}_n = \prod_{i=1}^n \bar{L}_i^{(n)}(X_i)\One{\{X_i \leq 2(1+\eta)\log(n) \}},
    \label{eq:L_n_tilde}
\end{align}
where 
\begin{align}
\bar{L}_i^{(n)}(x):= (1-\epsilon_n)+\epsilon_n L_i^{(n)} = 1+\epsilon_n (L_i^{(n)}(x)-1),
\label{eq:ell_n}
\end{align}
and
\begin{align*}
    L_i^{(n)}(x) := \frac{dQ_i^{(n)}}{d E_i^{(n)}}(x) . 
\end{align*}

Henceforth, all expectations are with respect to $X_i\sim E_i^{(n)}$ unless otherwise specified. For the first moment, since $\ex{L_i^{(n)}(X_i)}=1$, we have
\begin{align}
\ex{\tilde{L}_n} & = \prod_{i=1}^n \left( 1 - a_{n,i} \right),
\label{eq:proof:method2:L1_prod}
\end{align}
where, 
\[
a_{n,i} :=  \ex{ \bar{L}_i^{(n)}(X_i) \One{\{ X_i > 2(1+\eta) \log(n)\}}}.
\]
Consider
\begin{align}
    a_{n,i} & 
    = \Pr_{X_i \sim E_i^{(n)}}\left[X_i \geq 2(1+\eta)\log(n) \right] 
     + \epsilon_n \ex{\left( L_i^{(n)}(X_i)  - 1\right) \One{\{ X_i > 2(1+\eta) \log(n)\}} } \nonumber \\
     & \leq \Pr_{X_i \sim E_i^{(n)}}\left[X_i \geq 2(1+\eta)\log(n) \right] 
     + \epsilon_n \ex{ L_i^{(n)}(X_i) \One{\{ X_i > 2(1+\eta) \log(n)\}}} \nonumber \\
     & 
    = n^{-(1+\eta)+o(1)} 
     + n^{-\beta} n^{-\alpha^*(1+\eta ;r,\sigma)+o(1)},
     \label{eq:proof:method2:L1}
\end{align}
where the last transition follows from \eqref{eq:exp_asym} and from  \eqref{eq:APLCC_cond}. It follows from \eqref{eq:HC_cond1} that $a_{n,i}=o(1/n)$, hence \eqref{eq:proof:method2:L1_prod} converges to $1$ and the first moment condition of Lemma~\ref{lem:Ingster} holds.

As for the second moment, we have
\begin{align}
\ex{\tilde{L}_n^2} & = \prod_{i=1}^n \ex{ \left( (1-\epsilon_n)^2 + 2\epsilon_n(1-\epsilon_n) L_i^{(n)}(X_i) + \epsilon_n^2  (L_i^{(n)}(X_i))^2 \right) \One{\{ X_i \leq 2(1+\eta)\log(n) \}} }\nonumber \\
& = \prod_{i=1}^n \left( (1-\epsilon_n)^2 + 2 \epsilon_n (1-\epsilon_n) \ex{L_i^{(n)}(X_i)} +  \epsilon_n^2 \ex{(L_i^{(n)}(X_i))^2  \One{\{ X_i \leq 2(1+\eta)\log(n) \} }} \right) \nonumber \\
& \leq \prod_{i=1}^n \left( 1-\epsilon_n^2 + \epsilon_n^2 \ex{(L_i^{(n)}(X_i))^2  \One{\{ X_i \leq 2(1+\eta)\log(n) \}} } \right) \leq \prod_{i=1}^n \left( 1 + b_{n,i} \right), \label{eq:thm:proof:L2}
\end{align}
where 
\[
b_{n,i} := \epsilon_n^2 +  \epsilon_n^2 \ex{(L_i^{(n)}(X_i))^2 \One{\{ X_i \leq 2(1+\eta)\log(n) \}} }.
\]
By \eqref{eq:general_cond_B},
\begin{align*}
    \ex{(L_i^{(n)}(X_i))^2  \One{\{ X_i \leq 2(1+\eta)\log(n) \}} } & = \exsub{X_i \sim Q_i^{(n)}}{ L_i^{(n)}(X_i) \One{\{ X_i \leq 2(1+\eta)\log(n) \}} }\\
    & = n^{ - \alpha^*(1+\eta\,;\, r, \sigma) + o(1)}.
\end{align*}
It follows from \eqref{eq:HC_cond2} that
\begin{align*}
n\cdot b_{n,i} & = n^{1-2\beta} + n^{1 - 2\beta  - \alpha^*(1+\eta\,;\,r, \sigma) \} + o(1)} < n^{1-2\beta} + n^{-\delta},
\end{align*}
which implies $b_{n,i} = o(1/n)$ since $\beta > 1/2$. We conclude that \eqref{eq:thm:proof:L2} converges to $1$ hence the second-moment condition of Lemma~\ref{lem:Ingster} holds and the proof of Theorem~\ref{thm:powerlessness} is completed. \qed

\subsection{Proof of Corollary~\ref{cor:powerlessness}
\label{sec:proof:cor:powerlessness}
}
Condition \eqref{eq:general_cond_A} follows from Lemma~\ref{lem:LR_to_E}. Condition \eqref{eq:general_cond_B} follows from \eqref{eq:APLCC_cond}.

\subsection{Proof of Theorem~\ref{thm:powerfulness}
\label{sec:proof:thm:HC}
}
Under \eqref{eq:exp_asym} and \eqref{eq:HC_cond}, Lemma~\ref{lem:under_null0} implies that 
\begin{align}
    \Pr_{H_0^{(n)}} \left[\HC_n^* > \log(n) \right] \to 0.  \nonumber
\end{align}
Therefore, it is enough to show that $
    \Pr_{H_1^{(n)}} \left[\HC_n^* \leq \log(n) \right] \to 0$. Set
\[
F_n(t) := \frac{1}{n} \sum_{i=1}^n \One{p_i \leq t}. 
\]
note that \eqref{eq:exp_asym} and \eqref{eq:APLCC_cond} imply
\begin{align*}
    \exsub{H_1^{(n)}}{F_n(n^{-q})} = n^{-q + o(1)}(1-n^{-\beta}) + n^{-\beta} n^{-\alpha^*(q;r,\sigma)+o(1)}.
\end{align*}
Additionally, 
\begin{align}
    \HC_n^* = \max_{1 \leq i \leq n \gamma_0} 
\sqrt{n} \frac{\frac{i}{n}-p_{(i)}}{\sqrt{p_{(i)} (1-p_{(i)})}}
    = \sup_{1/n \leq u \leq \gamma_0 n} \sqrt{n} \frac{F_n(u)-u}{\sqrt{u (1-u)}}, \nonumber
\end{align}
because the supremum occurs at jumping points of $F_n(u)$. Hence, provided $\gamma_0 < 1/2$, we surely have
\begin{align}
\HC_n^*
    \geq \sqrt{n} \frac{F_n(t)-t}{\sqrt{t (1-t)}} \geq \sqrt{\frac{n}{t}} \left( F_n(t) - t \right),\quad \forall t\in[1/n,1/2). \nonumber
\end{align}
Setting $t_n=n^{-q}$ for $q\leq 1$, we obtain:
\begin{align}
    \Pr_{H_1^{(n)}} \left[ \HC_n^* \leq \log(n) \right] & \leq \Pr_{H_1^{(n)}} \left( \sqrt{n} \frac{F_{n}(t_n)-t_n}{\sqrt{t_n (1-t_n)}} \leq \log(n) \right) \nonumber  \\
    & \leq \Pr_{H_1^{(n)}} \left[ n^{\frac{q+1}{2}}(F_n(t_n) -t_n) \leq \log(n) \right]. \label{eq:proof_main1}
\end{align}
Apply Lemma~\ref{lem:Donoho_kipnis} with $\delta(q) = \alpha^*(q;r,\sigma)$, $\gamma(q) = (q+1)/2$, and $a_n = \log(n)$ to conclude that \eqref{eq:proof_main1} goes to zero as $n\to \infty$. Theorem~\ref{thm:powerfulness} follows. \qed

\subsection{Proof of Theorem~\ref{thm:BJpowerfulness}}

The proof is similar to the proof of \cite[Thm. 4.4]{moscovich2016exact}. In particular, we use:
\begin{lemma}
\label{lem:Moscovich}
\cite[Cor. A1]{moscovich2016exact} 
Let $\{\alpha_n\}_{n=1}^\infty$ be a sequence converging to infinity. Let $\mu_n$, $\sigma^2_n$, and $f_n$ denote the mean, variance and density of  ~$\Beta(\alpha_n,n-\alpha_n+1)$, respectively. Let $g(n)$ be any positive function satisfying $g(n) = o(\min\{\alpha_n, n-\alpha_n\})$ as $n\to \infty$. Then,
\begin{align}
    f_n(\mu_n + \sigma_n \cdot t) \geq \frac{e^{-t^2/2}}{\sqrt{2\pi} \cdot \sigma_n } \left( 1 - \frac{t^3}{\sqrt{g(n)}} - \frac{1}{g(n)} \right). \nonumber
\end{align}
\end{lemma}
Recall that $M_n^- = \min_{i=1,\ldots,n} \pi_{i}$, where 
\begin{align*}
    \pi_i = \Pr\left[ \Beta(i,n-i+1) \leq p_{(i)} \right],\qquad i=1,\ldots,n.
\end{align*}
We use the sequence $t_n = 1/n$ to separate $H_0^{(n)}$ from $H_1^{(n)}$. The limiting distribution of $M_n$ under $H_0$ satisfies (see \cite{gontscharuk2015intermediates} and \cite[Thm 4.1]{moscovich2016exact}),
\begin{align*}
    \Pr_{H_0} \left[ M_n^- \leq \frac{x}{2\log(n) \log \log(n)} \right] \to 1-e^{-x},
\end{align*}
from which it follows that 
\begin{align}
    \Pr_{H_0} \left[ M_n^- \leq t_n \right] \to 0. \label{eq:proof:mosc:H0}
\end{align}

For $X \sim \Beta(i, n-i+1)$, set
\begin{align*}
 \mu_i := \ex{X} = \frac{i}{n+1},\qquad  \sigma_i^2 := \Var{X} = \frac{i(n-1+1)}{(n+1)^2(n+2)},
\end{align*}
hence, for $x \in \reals$,
\begin{align}
\label{eq:beta_x}
\frac{\mu_i - x}{\sigma_i} = 
    \sqrt{n}\frac{i/n - x}{\sqrt{\frac{i}{n} \left(1-\frac{i}{n}\right) }}(1+o(1)).
\end{align}

The proof of Theorem~\ref{thm:powerfulness} in Section~\ref{sec:proof:thm:HC} implies in particular
\begin{align}
\label{eq:last_display2}
\Pr_{H_1^{(n)}}\left[\max_{i=1,\ldots,n} \sqrt{n}\frac{i/n - p_{(i)}}{\sqrt{\frac{i}{n} \left(1-\frac{i}{n}\right) }} \geq \log(n) \right] \to 1. 
\end{align}
By \eqref{eq:beta_x} and \eqref{eq:last_display2}, for any $\delta>0$ there exists $n_0(\delta)$ and $i^* \in \{1,\ldots,n\}$ such that \begin{align}
\tau^* := \frac{\mu_{i^*} - p_{(i^*)}}{\sigma_{i^*}} \geq \sqrt{ 2\log(n)}, \nonumber
\end{align}
with probability at least $1-\delta$.  %
Denote by $f_i$ the density $f_i\,:[0,1] \to \reals^+$ of $\Beta(i,n-i+1)$. We have
\begin{align}
    \pi_{i^*} & = \int_{0}^{p_{(i^*)}} f_{i^*}(x)dx \nonumber \\
    & = \sigma_{i^*} \int_{-\mu_{i^*}/\sigma_{i^*}}^{\tau^*} f_{i^*}(\mu_{i^*}+\sigma_{i^*}t)dt \nonumber \\
    & \leq \sigma_{i^*} \int_{-\infty}^{\tau^*} f_{i^*}(\mu_{i^*}+\sigma_{i^*}t)dt \nonumber \\
    & \leq \int_{\tau^*}^\infty \frac{1+o(1)}{\sqrt{2\pi}}e^{-x^2/2} dx \label{eq:proof:mosco:1}\\
    & = (1-\Phi(\tau^*))(1+o(1)) \sim \frac{1}{\tau^*}e^{-\tau^{*2}/2} \label{eq:proof:mosco:2},
\end{align}
where \eqref{eq:proof:mosco:1} follows from Lemma~\ref{lem:Moscovich} and \eqref{eq:proof:mosco:2} is due to Mills' ratio. Consequently, 
\begin{align}
    \pi_{i^*} \leq n^{-1}, \quad \text{for}\quad n\geq n_0(\delta), \nonumber
\end{align}
with probability at least $1-\delta$. Hence, for $n \geq n_0(\delta)$,
\begin{align*}
\Pr_{H_1^{(n)}} \left[M_n \leq t_n \right] \geq \Pr_{H_1^{(n)}} \left[M_n^{-} \leq n^{-1} \right] \geq
\Pr_{H_1^{(n)}} \left[\pi_{i^*} \leq n^{-1}  \right] \geq 1-\delta.
\end{align*}
Combining the last equation with \eqref{eq:proof:mosc:H0}, we conclude that the sequence of thresholds $t_n = 1/n$ separates $H_0^{(n)}$ from $H_1^{(n)}$ as advertised. \qed

\subsection{Proof of
Theorem~\ref{thm:Bonferroni} }
Set $\eta := 1 - \alpha(1;r,\sigma)-\beta$. 
The condition  $r>\rho_\Bonf(\beta,\sigma)$ implies $\eta>0$. By continuity of $q \to \alpha(q;r,\sigma)$, there exits $\delta>0$ such that 
\begin{align}
\label{eq:Bonf:proof:cond}
    1 - \alpha(1+\delta;r,\sigma) -\beta > \eta/2. 
\end{align}
For the statistic $p_{(1)} := \min_{i=1\ldots,n} p_i$, we show that, along the sequence of thresholds $a_n  = n^{-(1+\eta/2)}$, we have  $\Pr_{H_0^{(n)}}(p_{(1)} > a_n)\to 1$ while $\Pr_{H_1^{(n)}}(p_{(1)} > a_n)\to 0$. Indeed,
\begin{align}
    \Pr_{H_0^{(n)}} \left[ p_{(1)} \leq a_n \right] & = 1- \prod_{i=1}^n  \Pr_{H_0} \left[ p_i > a_n \right] \nonumber \\
    & = 1- \left(1 - a_n\cdot n^{o(1)} \right)^n \label{eq:Bonf:proof:H0} \\
    & = 1- \left(1 - n^{-(1+\eta/2)+o(1)} \right)^n
    \to 0, \nonumber
\end{align}
where \eqref{eq:Bonf:proof:H0} follows from \eqref{eq:exp_asym}. 
On the other hand, 
\begin{align}
    \Pr_{H_1^{(n)}}\left[ p_{(1)} \leq a_n \right] & = 1 - \prod_{i=1}^n  \Pr_{H_1^{(n)}}\left[ p_i > a_n \right]\nonumber \\
    & = 1 - \prod_{i=1}^n  \left(1 - \Pr_{H_1^{(n)}} \left[ p_i \leq a_n \right] \right),
    \label{eq:Bonf:proof:H1}
\end{align}
hence it is enough to show that $\Pr_{H_1^{(n)}} \left[ p_i \leq a_n \right] > n^{-1+\eta/2 + o(1)}$ uniformly in $i$. 
For $i=1,\ldots,n$, let $X_i$ be a RV with law $-2\log(X_i) \overset{D}{=} Q_i^{(n)}$. We have:
\begin{align}
\Pr_{H_1^{(n)}} \left[ p_i \leq a_n \right] & = (1-\epsilon_n) a_n\cdot n^{o(1)} + \epsilon_n \Pr \left[X_i \leq a_n\right]  \nonumber \\
& \ge \epsilon_n \Pr \left[X_i \leq  a_n\right] \nonumber \\
& = n^{-\beta- \alpha(1+\delta;r,\sigma) + o(1) }
\label{eq:Bonf:proof:H1_3}\\
& \ge n^{-1 + \eta/2 + o(1) }
\label{eq:Bonf:proof:H1_4},
\end{align}
where \eqref{eq:Bonf:proof:H1_3} follows from \eqref{eq:APLCC_cond}, and \eqref{eq:Bonf:proof:H1_4} follows from \eqref{eq:Bonf:proof:cond}. 
\qed

\subsection{Proof of Theorem~\ref{thm:FDR} }
The proof is similar to the proof of Theorem~1.4 in \citep{donoho2004higher}. The main idea is to establish the following claims:
\begin{enumerate}
    \item[(i)] Inference based on FDR thresholding ignores P-values in the range $(n^{-q},1]$, for $q<1$. 
    \item[(ii)] When $r < \rho_\Bonf(\beta,\sigma)$, P-values smaller than $n^{-q}$ under $H_1^{(n)}$ are as frequent as under $H_0^{(n)}$. 
\end{enumerate}
In order to establish (i) and (ii), define, for an interval $I\subset [0,1]$, 
\begin{align}
    T_I := \min_{i\,:\,p_{(i)} \in I} \frac{p_{(i)}}{i/n}.  \nonumber
\end{align}
For some $q>0$ and a sequence $\{a_n\}_{n=1}^\infty$ of threshold values
with $\liminf_{n\to\infty} a_n = 0$, 
\begin{align}
& \left|\Pr_{H_0^{(n)}}\left[\text{FDR rejects} \right] - \Pr_{H_1^{(n)}} \left[\text{FDR rejects} \right] \right| =  \left|\Pr_{H_0^{(n)}}\left[T_{[0,1]} < a_n \right] - \Pr_{H_1^{(n)}}\left[T_{[0,1]} < a_n \right]  \right|  \nonumber \\
    &\qquad \leq \Pr_{H_1^{(n)}} \left[T_{(n^{-q},1]} < a_n \right] + \Pr_{H_0^{(n)}} \left[T_{(n^{-q},1]} < a_n \right]
    \label{eq:FDR:proof:2} \\
    & \qquad \qquad  + \left|\Pr_{H_1^{(n)}} \left[T_{(0,n^{-q}]} < a_n \right] - \Pr_{H_0^{(n)}} \left[T_{(0,n^{-q}]} < a_n \right] \right| \label{eq:FDR:proof:1}. 
\end{align}
Note that the terms in \eqref{eq:FDR:proof:2} are associated with (i) while \eqref{eq:FDR:proof:1} is associated with (ii). \par
%
Lemma~\ref{lem:F1} implies that the terms in \eqref{eq:FDR:proof:2} vanish as $n\to \infty$. We now focus on the term \eqref{eq:FDR:proof:1}. Let $I \subset \{1,\ldots,n\}$ be a random set 
such that $i \in I$ with probability $\epsilon_n=n^{-\beta}$. Considering this randomness, an equivalent way of specifying $H_1^{(n)}$ is
\begin{align}
    -2\log(p_i) \sim \begin{cases} Q_i^{(n)} & i \in I \\
    \lognull & i\neq I,
    \end{cases}
    \qquad i=1,\ldots,n. \nonumber
\end{align}
%
For $i=1,\ldots,n$, let $X_i$ be a RV satisfying $-2 \log(X_i) \overset{D}{=} Q_i^{(n)}$. Choose $r<q<1$ such that 
\begin{align}
    1-\alpha(q;r,\sigma)-\beta + \delta < 0
    \label{eq:Bonf_cond}
\end{align}
for some $\delta>0$, which is possible since $r < \rho_\Bonf(\beta,\sigma)<1$. Consider the event:
\[
E_n^{q} := \{ p_i \leq n^{-q} \text{ for some } i \in I \}.
\]
Conditioned on the event $|I|=M$, \begin{align}
    \Pr\left[ E_n^{q} \mid |I|=M\right] & = \Pr \left[ \min_{i=1,\ldots,n} X_i  \leq n^{-q} \mid |I|=M \right] \nonumber \\
    & \leq 1-\left(1-n^{-\alpha(q;r,\sigma)+o(1)}\right)^M \label{eq:FDR:proof:3}  \\
    & \leq M \cdot n^{-\alpha(q;r,\sigma)+o(1)} \label{eq:FDR:proof:4} 
\end{align}
where \eqref{eq:FDR:proof:3} follows from \eqref{eq:APLCC_cond} and \eqref{eq:FDR:proof:4} follows from the inequality $M \cdot \log(1+x) > \log(1+ M x)$, $x\geq -1$. As $M\sim \Bin(n,\epsilon_n)$, we have $\Prp{M < n^{1+\delta/2} \epsilon_n} = \Prp{M < n^{1+\delta/2-\beta} }  \to 1$, e.g. by the Chernoff bound. Consequently, for any $b>0$,
\begin{align*}
     \Prp{ M \cdot n^{-\alpha(q;r,\sigma)+o(1)} > b} & \leq o(1) + \One{\{ n^{1 + \delta/2 -\beta -\alpha(q;r,\sigma)+o(1)} > b \}} \to 0
\end{align*}
where the last transition is due to \eqref{eq:Bonf_cond}. It follows that $\Pr\left[ E_n^{q} \right]\to 0$. From here, and using
\begin{align*}
    \Pr_{H_1^{(n)}} \left[ T_{[0, n^{-q})} < a_n \mid (E_n^{q})^c \right] = \Pr_{H_0^{(n)}} \left[ T_{[0, n^{-q})} < a_n \right],
\end{align*}
we get
\begin{align*}
\Pr_{H_1^{(n)}} \left[T_{(0,n^{-q}]} < a_n \right]  & = \Pr\left[ (E_n^{q})^c \right] \Pr_{H_1^{(n)}} \left[ T_{[0, n^{-q})} < a_n \mid (E_n^{q})^c \right] \\
& \qquad +  \Pr\left[ E_n^{q} \right] \Pr_{H_1^{(n)}} \left[ T_{[0, n^{-q})} < a_n \mid E_n^{q} \right] \\
& = \Pr_{H_1^{(n)}} \left[ T_{[0, n^{-q})} < a_n \mid (E_n^{q})^c \right](1+o(1)) + o(1) \\
& = \Pr_{H_0^{(n)}} \left[ T_{[0, n^{-q})} < a_n  \right] + o(1).
\end{align*}
All this implies that  \eqref{eq:FDR:proof:1} vanishes as $n\to \infty$. The proof is completed. \qed

\subsection{Proof of Theorem~\ref{thm:Fisher} }
We consider first the case in which $\{-2\log(p_i)\}_{i=1}^n$ follow \eqref{eq:hyp_log_n} and later extend our arguments to the general case \eqref{eq:hyp_log_n_appx}. 
Since $F_n \sim \chi_{2n}^2$, under the null in \eqref{eq:hyp_log_n} we have 
\begin{align}
    \ex{F_n | H_0^{(n)}} = 2n,\qquad \Var{F_n | H_0^{(n)}} = 4n.  \nonumber
\end{align}
As $F_n$ is asymptotically normal, it is enough to show that 
\begin{align}
\ex{F_n | H_1^{(n)}}\sim 2n(1+o(1/\sqrt{n})), \quad \text{and}\quad \Var{F_n | H_1^{(n)}}\sim 4n(1+o(1)).
    \label{eq:proof:Fisher:to_show}
\end{align}
For $X \sim \chi^2(r,\sigma)$, we have
\begin{align}
\ex{X} = \mu_n(r)^2 + \sigma^2,\qquad \ex{X^2 } = \mu_n^4(r) +  4\mu_n^2(r) \sigma^2 + 3\sigma^4, \nonumber
\end{align}
and $\Var{X} = 2\mu_n^2(r) \sigma^2 + 2 \sigma^4$. Therefore, 
with $Q_i^{(n)} = \chi^2(r,\sigma)$,
\begin{align*}
    2n \leq \ex{F_n | H_1^{(n)}} & = 2n(1-\epsilon_n) + n\cdot\epsilon_n\left(2r\log(n)+\sigma^2\right) = 2n(1 + o(1/\sqrt{n}));
\end{align*}
in the last transition, we used  $\beta>1/2$. Similarly, we have 
\begin{align*}
4n \leq \Var{F_n^2 | H_1^{(n)}} & = 4n (1 - \epsilon_n) + n \cdot \epsilon_n \left( 4 r \log(n) \sigma^2 + 2 \sigma^2 \right) \\
& = 4n(1 + o(1/\sqrt{n})) = 4n(1 + o(1)),
\end{align*}
hence \eqref{eq:proof:Fisher:to_show} holds. 

For the general case of \eqref{eq:hyp_log_n_appx} under \eqref{eq:APLCC_cond}, first note that 
\begin{align}
    \exsub{X \sim E_i^{(n)}}{X} & = \int_{0}^\infty \Prp{X \geq x|X\sim E_i^{(n)}} dx \nonumber\\
    & = 2 \log(n) 
    \int_{0}^\infty \Prp{X \geq 2 y \log(n)|X\sim E_i^{(n)}} dy \nonumber \\
    & =  2 \log(n) 
    \int_{0}^\infty e^{-y \log(n) (1+o(1))} dy = 2(1+o(1)), \label{eq:Fisher:proof1}
\end{align}
and, by a change of variable and evaluation of the integral of the function $y \to ye^{-y}$,  
\begin{align*}
    \exsub{X \sim E_i^{(n)}}{X^2} & = \int_{0}^\infty x\Prp{X \geq x|X\sim E_i^{(n)}} dx \\
    & = (2 \log(n))^2
    \int_{0}^\infty y \Prp{X \geq 2 y \log(n)|X\sim E_i^{(n)}} dy \\
    & =  (2 \log(n))^2
    \int_{0}^\infty y e^{-y \log(n) (1+o(1))} dy \\
    & = \frac{(2 \log(n))^2}{(2 \log(n)(1+o(1))^2} = 4(1+o(1)).
\end{align*}
It follows that 
\begin{align}
\ex{F_n|H_0^{(n)}} = 2n(1+a_n),\quad \text{and}\quad \Var{F_n|H_0^{(n)}} = 4n(1+o(1)),
\label{eq:Fisher:proof_null}
\end{align}
where $a_n \to 0$. Next, notice that 
\begin{align}
    \exsub{X\sim Q_i^{(n)}}{X} & = \int_{0}^\infty \Prp{X \geq x|X\sim Q_i^{(n)}} dx  \nonumber \\
    & = 2 \log(n) \int_0^\infty \Prp{X \geq  2y \log(n) |X\sim Q_i^{(n)}} dy \nonumber \\
    & = 
     \int_0^\infty n^{-\alpha(y;r,\sigma)+o(1)} dy, \label{eq:proof:Fisher:1}  \\
     & = n^{o(1)}, \label{eq:proof:Fisher:2}
\end{align}
where \eqref{eq:proof:Fisher:1} follows from \eqref{eq:APLCC_cond} and \eqref{eq:proof:Fisher:2} is due to
\begin{align*}
\int_0^\infty n^{-\alpha(y;r,\sigma)}dy = 
\frac{\sigma ^2 n^{-\frac{r}{\sigma ^2}}}{\log (n)}+ 2\sigma \sqrt{ \frac{\pi r}{\log(n)}} \Phi \left(\frac{ \sqrt{ 2r \log (n)}}{\sigma }\right)= o(1).
\end{align*}
From \eqref{eq:Fisher:proof1} and because $\beta>1/2$,
\begin{align*}
2n \leq \ex{F_n|H_1^{(n)}} & = 2n(1-\epsilon_n)\left( 1 + a_n) \right) + \epsilon_n n^{1+o(1)}  \\
& \leq 2n(1+a_n) + o(1/\sqrt{n}).
\end{align*}
Similarly, 
\begin{align*}
    \exsub{X\sim Q_i^{(n)}}{X^2} & = \int_{0}^\infty x\Prp{X \geq x|X\sim Q_i^{(n)}} dx  \\
    & = (2 \log(n))^2 \int_0^\infty y \Prp{X \geq  2y \log(n) |X\sim Q_i^{(n)}} dy \\
    & = 
    (2 \log(n))^2 \int_0^\infty y \cdot n^{-\alpha(y;r,\sigma)+o(1)} dy  = n^{o(1)},
\end{align*}
where in the last transition we used that 
\[
\int_0^\infty y\cdot  n^{-\alpha(y;r,\sigma)} dy = o(1),
\]
as can be deduced from the analytic expression of this integral. We obtain
\begin{align}
4n \leq \ex{F_n^2|H_1^{(n)}} & = 4n(1-\epsilon_n)(1+o(1)) + n\cdot \epsilon_n \cdot n^{o(1)} \\
& = 4n(1+o(1)). \label{eq:Fisher:proof:H1_F2}
\end{align}

Evaluations similar to those in  \eqref{eq:Fisher:proof_null} and \eqref{eq:Fisher:proof:H1_F2} imply that $F_n$ satisfies the conditions of the Lyapunov central limit theorem for sums of independent but perhaps non-identically distributed RVs. Consequently, $F_n$ is asymptotically normal both under $H_0^{(n)}$ and $H_1^{(n)}$. Since we have
\begin{align*}
    \frac{\ex{F_n|H_1^{(n)}}-\ex{F_n|H_0^{(n)}} }{\sqrt{\Var{F_n|H_0^{(n)}} } } \to 0,\quad \text{and} \quad  \frac{\Var{F_n|H_0^{(n)}} }{\Var{F_n|H_1^{(n)}} } \to 1,
\end{align*}
we conclude that $F_n$ is asymptotically powerless 
(c.f. \cite[Lem. B.2]{ariascastro2013distributionfree}). 
\qed                                        

\section{Proofs of Results in Section~\ref{sec:models} \label{sec:proofs_of_props}}
\setcounter{equation}{0}

\subsection{Proof of Proposition~\ref{prop:poisson_pvals}}

We use Lemma~\ref{lem:Poisson} with $\lambda_n = \lambda_i$, 
$a_n=2q \log(n)$ and $b_n = 2r \log(n)$. We obtain 
\[
- \log \Prp{-2\log \Pp( X_i; \lambda) \geq 2q\log(n)} = \log(n)\alpha(q;r,1)(1+o(1)),
\]
where $o(1) \to 0$ independently of $\lambda_i$. Proposition~\ref{prop:poisson_pvals} follows. \qed

\subsection{Proof of Proposition~\ref{prop:twosample_Poisson}
\label{sec:prop:twosample_poisson:proof}
}
 Our analysis relies on moderate deviation estimate for variance-stabilized Poisson counts as provided in the following lemma from \citep{DonohoKipnis2020}.
\begin{lemma}{\citep[Lemma 5.3]{DonohoKipnis2020}}\label{lem:modorate_deviation}
Let $\Upsilon_\lambda',\Upsilon_\lambda$ denote two independent Poisson RVs. Let $a(\lambda)$ be a non-negative function. Consider a sequence of pairs $(\lambda, \lambda')$ such that 
$\lambda \to \infty$, $\lambda' \geq \lambda$, $\lambda'/\lambda \to 1$ as $n \to \infty$. Also suppose $a(\lambda) - (\sqrt{2\lambda'} - \sqrt{2\lambda}) \to \infty$ while $a(\lambda)/\lambda \to 0$. Then:
\[
\lim_{n \to \infty} \frac{1}{ \left(\sqrt{a(\lambda)}
-(\sqrt{2\lambda'}-\sqrt{2\lambda})
\right)^2}  \log \left[ \Pr\left( \sqrt{2\Upsilon_{\lambda'}}-\sqrt{2\Upsilon_{\lambda}} \geq \sqrt{a(\lambda)} \right) \right] = -\frac{1}{2}.
\]
\end{lemma} 

Let $W_i := \sqrt{2Y_i}-\sqrt{2X_i}$ and $S_i := -2\log(\pi_i)$. We have
\begin{align*}
     \Prp{S_i > 2q\log(n) } 
    & =  \Prp{2 \bar{\Phi}(W_i) < n^{-q} } \\
    & = \Prp{W_i > \bar{\Phi}^{-1}(n^{-q}/2) } 
\end{align*}
By Mill's ratio, 
\[
\bar{\Phi}^{-1}(n^{-q}/2) = \sqrt{2q \log(n)(1+o(1))}.
\]

Let $\lambda'_i = \lambda_i + \sqrt{\mu_n(r) \lambda_i}$. 
Note that 
$\lambda_i'/\lambda_i = 1 + \sqrt{2r \log(n)/\lambda_i} \to 1$ uniformly in $i\leq n$ by \eqref{eq:lambda_cond}, we have
\[
\sqrt{2 \lambda_i'} - \sqrt{2 \lambda_i} = \sqrt{r \log(n)}(1+o(1))
\]
where here and henceforth $o(1)$ indicates a sequence tending to $\to 0$ uniformly in $i$. Consequently,
\begin{align*}
\sqrt{a(\lambda_i)} - (\sqrt{2\lambda_i'} - \sqrt{2\lambda_i}) & = \sqrt{2q\log(n)(1+o(1))} -  \sqrt{2\lambda_i}\left(1 + o(1) \right) \\
& =: 2\log(n) \alpha(q;r/2,1)(1+o(1)). 
\end{align*}

The above evaluations show that 
$\lambda = \lambda_i$, 
$\lambda' = \lambda_i + \sqrt{2r \log(n) \lambda_i }$ satisfies the conditions of
Lemma~\ref{lem:modorate_deviation}. We obtain
\begin{align*}
      \log\left(\Prp{W_i > \bar{\Phi}^{-1}(n^{-q}/2)} \right) = -\log(n) \alpha(q;r/2,1)(1+o(1)),
\end{align*}
with $o(1)$ independent of $i$. It follows that
\begin{align*} 
\max_{1 \leq i \leq n} \left| \frac{-\log \Prp{ S_i > 2 q \log(n)}}{\log(n)} - \alpha(q;r/2,1) \right| \to 0.
\end{align*}


\subsection{Proof of Proposition~\ref{prop:binomial}}
\label{sec:proof:prop:binomial}

By the symmetry of $\Bin(m,1/2)$ around $m/2$, for $x \geq 0$ we have 
\begin{align*}
p_{\mathsf{Bin}}(x) & = \Prp{\left| \Bin(m,1/2) - \frac{m}{2} \right| \geq \left| x - m/2 \right|} \\& = 2 \Prp{ \frac{\Bin(m,1/2) - \frac{m}{2}}{\sqrt{m}/2}  \geq  \frac{|x - m/2|}{\sqrt{m}/2}}.
\end{align*}
A Berry-Essen-type argument applied to the binomial survival function (e.g. \cite[App. 1]{shorack2009empirical}) implies 
\[
\left|p_{\mathsf{Bin}}(x) - 2 \bar{\Phi}\left( t(x;m) \right) \right| \leq \frac{C_1}{\sqrt{t(x)}},\quad t(x;m) = \frac{|x-m/2|}{\sqrt{m}/2}. 
\]
for some constant $C_1$. Therefore, by Mill's ratio, as $t \to \infty$, 
\begin{align*}
-2\log p_{\mathsf{Bin}}(x) = \left( \frac{x - m/2}{\sqrt{m}/2} \right)^2 + O(1). 
\end{align*}
By standard central limit argument, 
\[
\frac{X - m(1/2+\delta)}{\sqrt{m(1/4-\delta^2)}} \overset{D}{=} Z + o_p(1),\quad Z \sim \Ncal(0,1),
\]
as $m \to \infty$, hence 
\[
t(X;m) + o_p(1) \overset{D}{=} \sqrt{1-4\delta^2} Z + 2\sqrt{m} \delta = \sqrt{1-s r} Z + \sqrt{2 r \log(n)}
\]
is unbounded in probability as $n \to \infty$. We obtain
\begin{align*}
 & \log \Prp{ -2\log p_{\mathsf{Bin}}(X) \geq 2 q \log(n)} = \log \Prp{ \left( \frac{X - m/2}{\sqrt{m}/2} \right)^2 + O_P(1) \geq 2 q \log(n)} \\
 & \qquad  = \log \Prp{  \frac{X - m/2}{\sqrt{m}/2}   \geq \sqrt{2 q \log(n)}(1+o(1)) } \\
 & \qquad = \log \Prp{  \sqrt{1- 4\delta^2} \frac{X - m(1/2+\delta)}{\sqrt{m(1/4- \delta^2)}} +
 2\sqrt{m}\delta \geq \sqrt{2 q \log(n)}(1+o(1)) } \\
 & \qquad = \log \Prp{  \sqrt{1- s r} \frac{X - m(1/2+\delta)}{\sqrt{m(1/4- \delta^2)}} +
 \sqrt{2 r \log(n)} \geq \sqrt{2 q \log(n)}(1+o(1))} \\
 & \qquad = \log \Prp{ \frac{X - m(1/2+\delta)}{\sqrt{m(1/4- \delta^2)}} \geq \sqrt{2 \log(n)}\frac{\sqrt{q} - \sqrt{r}}{\sqrt{1- rs}}(1+o(1))} \\
 & \qquad = \log \Prp{ \frac{X - m(1/2+\delta)}{\sqrt{m(1/4- \delta^2)}} \geq \sqrt{2 \log(n) \alpha(q;r, 1-sr) }(1+o(1))}. 
\end{align*}
From here, \eqref{eq:prop:binomial} follows by
applying Cram\'er's theorem \eqref{eq:cramer}.

\bibliographystyle{chicago}      
\bibliography{HigherCriticism}  
